\documentclass[reqno,twoside]{amsart}
\usepackage{amssymb,amsmath,latexsym,hhline,graphics}
\usepackage{amsfonts,layout}
\usepackage{fullpage}
\usepackage{microtype}
\DisableLigatures{encoding = *, family = * }

\usepackage[colorlinks=true,citecolor=red,linkcolor=blue,pdfpagetransition=Blinds]{hyperref}

\title{Existence and cost of boundary controls for a degenerate/singular parabolic equation}

\author{U. Biccari}  
\address{Chair of Computational Mathematics, Fundaci\'on Deusto, Avda de las Universidades 24, 48007 Bilbao, Basque Country, Spain.}
\address{Facultad de Ingenier\'ia, Universidad de Deusto, Avda de las Universidades 24, 48007 Bilbao, Basque Country, Spain.}
\email{umberto.biccari@deusto.es, u.biccari@gmail.com}

\author{V. Hern\'andez-Santamar\'ia} 
\address{Institut de math\'ematiques de Toulouse, UMR5219; Universit\'e de Toulouse; CNRS, UPS IMT, F-31062 Toulouse Cedex 9, France}
\email{victor.santamaria@math.univ-toulouse.fr}

\author{J. Vancostenoble} 
\address{Institut de math\'ematiques de Toulouse, UMR5219; Universit\'e de Toulouse; CNRS, UPS IMT, F-31062 Toulouse Cedex 9, France}
\email{vancoste@math.univ-toulouse.fr}

\subjclass[2010]{30E05, 35K65, 35K67, 93B05, 93B60}
\keywords{Parabolic equations, degenerate coefficients, singular potential, boundary controllability, moment method.}
\date{\today}
\thanks{This project has received funding from the European Research Council (ERC) under the European Union’s Horizon 2020 research and innovation programme (grant agreement NO. 694126-DyCon). The work of U. B. was partially supported by the Grant MTM2017-92996-C2-1-R COSNET of MINECO (Spain) 	and by the Air Force Office of Scientific Research (AFOSR) under Award NO. FA9550-18-1-0242.}

\newtheorem{Theorem}{Theorem}[section]
\newtheorem{Definition}{Definition}[section]
\newtheorem{Proposition}{Proposition}[section]
\newtheorem{Lemma}{Lemma}[section]

\numberwithin{equation}{section}

\newtheorem{Remark}{Remark}[section]

\newtheorem{Hyp.}{Hyp.}[section]

\newcommand{\NN}{\mathbb{N}}

\newcommand{\jnk}{j_{\nu(\alpha,\mu),k}}

\newcommand{\norm}[2]{\left\|#1\right\|_{#2}}

\newcommand{\C}{\mathcal C}

\begin{document}

\begin{abstract}

In this paper, we consider the following degenerate/singular parabolic equation
\begin{align*}
	u_t -(x^\alpha u_{x})_x - \frac{\mu}{x^{2-\alpha}} u =0, \qquad x\in (0,1), \ t \in (0,T),
\end{align*}
where $0\leq \alpha <1$ and $\mu\leq (1-\alpha)^2/4$ are two real parameters. 

We prove the boundary null controllability by means of a $H^1(0,T)$ control acting either at $x=1$ or at the point of degeneracy and singularity $x=0$. Besides we give sharp estimates of the cost of controllability in both cases in terms of the parameters $\alpha$ and $\mu$. The proofs are based on the classical moment method by Fattorini and Russell and on recent results on biorthogonal sequences.

\end{abstract}
\maketitle

\section{Introduction and main results}

The aim of this paper is to prove boundary null controllability for some degenerate/singular parabolic equation in $1-D$ and establish sharp estimates of the control cost. More precisely, we focus on  the following degenerate/singular operator:
\begin{align*}
	\mathbf P_{\alpha,\mu} u : = u_t -(x^\alpha u_x)_x - \frac{\mu}{x^{2-\alpha}} u, \qquad x\in (0,1).
\end{align*}
Observe that, when $\mu=0$, this operator is purely degenerate: 
\begin{align*}
	{\mathbf P_{\alpha}} u := {\mathbf P_{\alpha,0}} u =u_t -(x^\alpha u_x)_x , \qquad x\in (0,1),
\end{align*}
whereas, when $\alpha=0$, it becomes purely singular with a singularity that takes the form of an inverse square potential:
\begin{align*}
	{\mathbf P_{\mu}}u := {\mathbf P_{0,\mu}} u =  u_t - u_{xx} - \frac{\mu}{x^2} u, \qquad x\in (0,1).
\end{align*}

Null controllability properties by means of {\bf a locally distributed control} for such operators have been investigated in various papers.  We refer the reader to the following pioneering contributions:

\begin{itemize}
	\item Concerning  the degenerate operator ${\mathbf P_{\alpha}}$, the first complete result was obtained in \cite{sicon2008} and shows that null controllability holds true if and only if $0 \leq \alpha <2$. One distinguishes  here the two cases $0 < \alpha < 1$ and $1 \leq \alpha <2$ for well-posedness reasons: in the natural functional setting associated to the {\it weakly} degenerate operator, that is when  $0 < \alpha < 1$, the trace at $x=0$ exists. So one can consider a Dirichlet condition at $x=0$. On the contrary, the trace does not exist when $\alpha \geq 1$. Here  the Dirichlet boundary condition needs to be changed by some Neumann-kind one. We also refer to \cite{fatiha, non-div, memoire,Martinez} for various related results.

	\item Concerning the inverse square singular operator ${\mathbf P_{\mu}}$, the first study was made in \cite{Va-Zu-JFA} and complemented in \cite{Ervedoza}. In these references, it was shown that null controllability holds true if and only if $\mu \leq \mu^\star$ where  $\mu^\star = 1/4$ is the constant appearing in the well-known Hardy inequality
\begin{align*}
	 \frac 14\int_0^1\frac{z^2}{x^2}\,dx \leq  \int_0^1  z_x^2\, dx .
 \end{align*}
Also here one distinguishes the two cases $\mu < \mu^\star$ and $\mu=\mu^\star$ again for well-posedness reasons: the natural functional space in the critical case $\mu=\mu^\star$ slighly differs from the one in the general sub-critical case. Let us refer to \cite{biccari,cazacu} for other similar situations. 

	\item Finally, the mixed degenerate/singular operator  ${\mathbf P_{\alpha, \mu}}$ was studied in \cite{vanco1}. Null controllability here holds true if and only if 
\begin{align*}
	0 \leq \alpha <2 \quad \text{ and } \quad \mu \leq \mu(\alpha)
\end{align*}
where 
\begin{align*}
	\mu(\alpha): = \frac{(1-\alpha)^2}{4}
\end{align*}
is the constant appearing in the \textit{generalized Hardy inequality}
\begin{equation}\label{generalized-hardy}
	\frac{(1-\alpha)^2}{4} \int_0^1 \frac{z^2}{x^{2-\alpha}}\, dx \leq  \int_0^1 x^\alpha z_x^2\,dx .
\end{equation}
See also \cite{Fo-Sa, Maniar} for other works on this theme. 
\end{itemize}

Dealing with locally distributed controls, all these mentioned contributions are mainly based on a Carleman approach, suitably adapted for taking into account the degeneracy/singularity in the equation. 

In the present paper, we turn to {\bf the case of a boundary control}. To our knowledge, the question has never been studied for the degenerate/singular operator ${\mathbf P_{\alpha, \mu}}$. However, some recent works studied this problem for the purely degenerate operator ${\mathbf P_{\alpha}}$ and next for the purely singular one ${\mathbf P_{\mu}}$. In more detail:
\begin{itemize}
	\item In the case of ${\mathbf P_{\alpha}}$, the first result of boundary controllability was obtained in \cite{Gueye} for a control acting at $x=0$ and in the case of a weak degeneracy $0\leq \alpha <1$. It has been complemented in \cite{cost-weak} where sharp estimates of cost of the control have been obtained. Next,  in \cite{CMV-the-cost-strong}, the case of a strong degeneracy ($1\leq\alpha <2$) with a control acting at $x=1$ has been studied, analyzing again both the existence and the cost of the control. 

	\item In the case of ${\mathbf P_{\mu}}$, the boundary controllability from $x=1$ has been studied in \cite{MV-cost-singular} whereas the case of a control at $x=0$ is treated in \cite{biccari2}.  
\end{itemize}

When addressing the boundary controlability problem, the approach by Carleman estimates presents some difficulties. Indeed, the specific weight functions introduced in \cite{sicon2008,vanco1,Va-Zu-JFA} to deal with the degeneracy and/or the singularity do not provide suitable boundary terms. For this reson, the approach of the aforementioned papers is based on decomposition in series and the well-known moment method. This is the methodology that we will employ also in the present work. 

The rest of the paper is organized as follows: in Section \ref{results_sec}, we formultate precisely the problems we are going to study and we present our main theorems. In Section \ref{preliminary_sec}, we introduce some preliminary results on the spectral properties of the operator ${\mathbf P}_{\alpha,\mu}$ which will then be at the basis of our proofs. Moreover, we briefly describe the main procedure of the moment method. Section \ref{well_pos_sec} is devoted to the well-posedness of our problems. In Section \ref{sec-pf-control_thm1} and \ref{sec-pf-control_thm0}, we present the proof of our main results, Theorem \ref{control_thm1} and \ref{control_thm0}. Finally, in Section \ref{open_pb_sec} we give some conclusive remarks and open problems.

\section{Problem formulation and main results}\label{results_sec}

\subsection{Description of the controllability problem}
Let us describe more precisely the controllability problems we are interested in. First of all, we  focus in this work on the case of a {\it weak degeneracy}, that is, $0\leq\alpha<1$. The case of a strong degeneracy ($1\leq\alpha<2$) requires a change of boundary condition and it will be treated in a future work. So throughout the paper, we assume that the parameters $\alpha$ and $\mu$ satisfy the following assumption: 
\begin{equation}\label{hyp-parameters}
	0 \leq \alpha <1  \quad \text{ and } \quad \mu \leq \mu(\alpha)=\frac{(1-\alpha)^2}{4}.
\end{equation}

Moreover we will consider boundary controls acting either at $x=1$ or at the degeneracy/singularity point $x=0$.

\subsubsection{Control acting away from the degenerate/singular point} We will first study the case of a boundary control acting at $x=1$ (that is, away from the degenerate and singular point): let $u_0 \in L^2(0,1)$, $T>0$ and consider 
\begin{equation}\label{pb-x-1}
	\begin{cases}
		\displaystyle u_t -(x^\alpha u_x)_x - \frac{\mu}{x^{2-\alpha}} u =0,  & (x,t)\in(0,1)\times(0,T):=Q
		\\
		u(0,t)=0, & t\in(0,T) 
		\\
		u(1,t)=H(t), & t\in(0,T) 
		\\
		u(x,0)=u_0(x), & x\in(0,1).
	\end{cases}
\end{equation}

Here $H$ represents some control term that aims to steer the solution to zero at time $T$. Our first goal is to establish the existence of such control (which could in this case be deduced from the result of controllability by a locally distributed control via the method of extension of the domain). Moreover, in this work we are also interested in providing sharp estimates of the cost of such control, in dependence of the parameters $\alpha$ and $\mu$ entering in our problem. 

\subsubsection{Control acting at the degenerate/singular point} 
Next we will turn to the case of a control acting at $x=0$ (that is on the point of degeneracy and singularity). In this case,  even the existence of a control is new since it cannot be deduced from the result of controllability by a locally distributed control. Moreover, as previously, we also aim at estimating precisely the cost of the control. The problem we consider here is: 
\begin{equation}\label{pb-x-0}
	\begin{cases}
		\displaystyle u_t -(x^\alpha u_x)_x - \frac{\mu}{x^{2-\alpha}} u =0,  & (x,t)\in Q
		\\
		(x^{-\gamma}u)(0,t)=H(t), & t\in(0,T)
		\\
		u(1,t)=0, & t\in(0,T)
		\\
		u(x,0)=u_0(x), & x\in(0,1).
	\end{cases}
\end{equation}

Due to the presence of the singularity at $x=0$, it is not possible to impose a standard non homogeneous Dirichlet boundary condition. For this reason, as in \cite{biccari2}, we use the above weighted Dirichlet condition where the coefficient $\gamma$ is defined by
\begin{equation}\label{def-gamma}
	\gamma=\gamma(\alpha,\mu):= \frac{1-\alpha}{2}-\frac 12\sqrt{(1-\alpha)^2-4\mu} =\sqrt{\mu(\alpha)}- \sqrt{\mu(\alpha)-\mu}.
\end{equation}
Notice that we have 
\begin{align*}
	\gamma(\alpha,0)=0 \quad \textrm{ and } \quad \gamma(0,\mu) = \frac 12\Big(1-\sqrt{1-4\mu}\,\Big), 
\end{align*}
consistently with \cite{biccari2,cost-weak}.

\subsection{Main results}

We present here the main results of the paper. To this end, we firs need to introduce the following notion of controllability cost.

For any $T>0$, $0\leq \alpha <1$, $\mu \leq \mu(\alpha)$ and any initial datum $u_0 \in L^2(0,1)$, we introduce the set of admissible controls:
\begin{align*}
	\mathcal U_\text{ad}(\alpha, \mu, T, u_0) := \Big\{ H \in H^1(0,T)  \;\Big|\; u^{(H)} (T)=0 \Big\},
\end{align*}
where $u^{(H)}$ denotes the solution of \eqref{pb-x-1} or of \eqref{pb-x-0} corresponding to the control $H$. Then we consider the controllability cost for any $u_0 \in L^2(0,1)$
\begin{align*}
	\C^{H^1} (\alpha, \mu, T, u_0) := \mathop{\text{inf}}_{H \in \mathcal U_\text{ad}(\alpha, \mu, T, u_0)} \norm{H}{H^1(0,T)},
\end{align*}
which is the minimal energy needed to drive the initial datum $u_0$ to zero.  Finally, we define the global notion of controllability cost:
\begin{align*}
	\C^{H^1}_{bd-ctr} (\alpha, \mu, T) := \mathop{\text{sup}}_{\norm{u_0}{L^2(0,1)}=1}
\C^{H^1} (\alpha, \mu, T, u_0).
\end{align*}

\subsubsection{Results for a control acting at $x=1$}
Our first main result, concerning the existence of a control for equation \eqref{pb-x-1}, will be the following.
\begin{Theorem}\label{control_thm1}
Let $0\leq\alpha<1$ and $\mu\leq\mu(\alpha)$. Given any $T>0$ and $u_0\in L^2(0,1)$, the following assertions hold :
\begin{itemize}
	\item[{\bf (i)}] \textbf{Existence of a control.}
	There exists a control function $H\in H^1(0,T)$ such that the solution of \eqref{pb-x-1} satisfies $u(x,T)=0$.

	\item[{\bf (ii)}] \textbf{Upper bound of the cost.}
	There exists a constant $\C_u>0$, independent of $\alpha$, $\mu$ and $T$, such that the cost of null controllability for \eqref{pb-x-1} satisfies
	\begin{align*}
 		\C^{H^1}_{bd-ctr}(\alpha, \mu,T)\leq \C_u e^{\frac{\C_u}{T}}\left[1+\sqrt{\mu(\alpha)-\mu}\right]e^{- \C_u\left[1+\sqrt{\mu(\alpha)-\mu}\right]^2 T}.
	\end{align*}
	\item[{\bf (iii)}] \textbf{Lower bound of the cost.}
	There exists a constant $\C_u>0$, independent of $\alpha$, $\mu$ and $T$, such that the cost of null controllability for \eqref{pb-x-1} satisfies: 
	\begin{itemize}
		\item[$\bullet$] in the case
		\begin{align*}
			\nu(\alpha, \mu) \in \left[0, \frac{1}{2}\right], \quad \text{ that is, } \mu \in \left[ \frac{\alpha}{16} (3\alpha-4), \mu(\alpha)\right],
		\end{align*}
		then
		\begin{align*}
			\C_{ctr-bd} \geq \C_u  e^{\frac{\C_u}{ T}}  e^{- \C_u \left[1+  \sqrt{\mu(\alpha)-\mu}\right]^2 T};
		\end{align*}
		\item[$\bullet$] in the case 
		\begin{align*} 
			\nu(\alpha, \mu) \in \bigg[\frac{1}{2}, +\infty \bigg),  \quad \text{ that is, } \mu \in \bigg( - \infty, \frac{\alpha}{16} (3\alpha-4) \bigg],
		\end{align*}
		then 
		\begin{align*}
			\C_{ctr-bd} \geq 
			\C_u\,e^{\frac{\C_u}{T}}e^{-\C_u \left[1+ \sqrt{\mu(\alpha)-\mu} \right]^2 T}\, e^{-\C_u \left[\sqrt{\mu(\alpha)-\mu} \right]^{4/3} \left(\ln\left[\sqrt{\mu(\alpha)-\mu}\right] + \ln \frac{1}{T} \right)}.
		\end{align*}	
	\end{itemize}
\end{itemize}
\end{Theorem}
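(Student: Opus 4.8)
The plan is to follow the classical Fattorini--Russell moment method, reducing the boundary controllability problem to an infinite moment problem whose resolution relies on a suitable biorthogonal family. First I would use the spectral analysis of Section~\ref{preliminary_sec}: the operator associated with $\mathbf P_{\alpha,\mu}$ under Dirichlet conditions is self-adjoint with compact resolvent, its eigenvalues $\lambda_k=\lambda_k(\alpha,\mu)$ are, up to an explicit normalizing constant, the squares of the positive zeros $\jnk$ of the Bessel function $J_{\nu(\alpha,\mu)}$, and the associated eigenfunctions $\{\Phi_k\}_{k\geq 1}$ form an orthonormal basis of $L^2(0,1)$. Writing $u_0=\sum_k u_{0,k}\Phi_k$ and expressing the duality between \eqref{pb-x-1} and its backward adjoint problem, I would show that a control $H\in H^1(0,T)$ drives $u_0$ to rest at time $T$ if and only if the moment conditions
\begin{equation*}
	\int_0^T H(t)\,e^{-\lambda_k(T-t)}\,dt = -\frac{u_{0,k}}{\Phi_k'(1)}\,e^{-\lambda_k T}, \qquad k\geq 1,
\end{equation*}
are satisfied. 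An essential preliminary task is then to obtain sharp two-sided bounds, uniform in $(\alpha,\mu)$, on the normalization weights $\Phi_k'(1)$ and on the spectral gaps $\lambda_{k+1}-\lambda_k$, all governed by the asymptotics of $\jnk$ and hence by $\nu(\alpha,\mu)=\tfrac{2}{2-\alpha}\sqrt{\mu(\alpha)-\mu}$.

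To solve the moment problem I would invoke the recent results on biorthogonal families to real exponentials quoted in the introduction. Since $\lambda_k\sim C\,k^2$ for large $k$, one has $\sum_k\lambda_k^{-1}<\infty$, which is precisely the summability condition guaranteeing a family $\{q_k\}_{k\geq 1}\subset L^2(0,T)$ biorthogonal to $\{e^{-\lambda_k t}\}_{k\geq 1}$, together with explicit bounds on $\norm{q_k}{H^1(0,T)}$ carrying the correct dependence on $T$, on the first eigenvalue $\lambda_1\sim \jn^2$, and on the gaps. The control is then defined by
\begin{equation*}
	H(t) = -\sum_{k\geq 1} \frac{u_{0,k}}{\Phi_k'(1)}\,e^{-\lambda_k T}\,q_k(T-t),
\end{equation*}
whose convergence in $H^1(0,T)$ I would verify by combining the bounds on $\norm{q_k}{H^1(0,T)}$, the exponential smallness of $e^{-\lambda_k T}$, and the controlled growth of $|\Phi_k'(1)|$; this yields assertion~\textbf{(i)}.

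For the upper bound~\textbf{(ii)} I would estimate $\norm{H}{H^1(0,T)}$ by the triangle inequality applied to the series above, controlling $\sum_k |u_{0,k}/\Phi_k'(1)|\,e^{-\lambda_k T}\,\norm{q_k}{H^1(0,T)}$ through Cauchy--Schwarz in the coefficients $u_{0,k}$ and the uniform spectral estimates. The dominant term is $k=1$: it produces $e^{-\C_u\lambda_1 T}$, and since $\jn$ is comparable to $1+\nu$, hence to $1+\sqrt{\mu(\alpha)-\mu}$, this gives the factor $e^{-\C_u[1+\sqrt{\mu(\alpha)-\mu}]^2 T}$, while the short-time blow-up of the biorthogonal norms furnishes the $e^{\C_u/T}$ factor and the prefactor $1+\sqrt{\mu(\alpha)-\mu}$.

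Finally, assertion~\textbf{(iii)} requires a matching lower bound, which I would obtain by testing the observability identity against an initial datum essentially concentrated on the first mode and exploiting biorthogonality to isolate a single exponential, so that the cost is bounded below by a quantity governed by $\norm{q_1}{L^2(0,T)}$ and $e^{-\lambda_1 T}$. The threshold $\nu(\alpha,\mu)=\tfrac12$, equivalently $\mu=\tfrac{\alpha}{16}(3\alpha-4)$, marks the change of regime in the fine asymptotics of the first Bessel zero $\jn\approx \nu+a_0\nu^{1/3}+\cdots$; in the regime $\nu\geq\tfrac12$ this expansion, fed into the refined lower estimates of the biorthogonal norm, produces the additional correction $e^{-\C_u[\sqrt{\mu(\alpha)-\mu}]^{4/3}(\ln[\sqrt{\mu(\alpha)-\mu}]+\ln\frac1T)}$. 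The hard part of the whole argument lies exactly here: deriving the sharp, $(\alpha,\mu)$-uniform estimates of $\jnk$, of the gaps, and of $\Phi_k'(1)$, and then propagating them through the biorthogonal-family bounds so that the upper and lower costs match up to the stated correction factor. The delicate Bessel asymptotics underlying the threshold $\nu=\tfrac12$ are the crux of the proof.
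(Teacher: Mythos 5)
Your overall strategy is the same as the paper's: Bessel spectral analysis, reduction to a moment problem, resolution via a biorthogonal family, upper bound from norm estimates of that family, and a lower bound obtained by taking $u_0=\Phi_1$ so that any admissible control is itself (after normalization by $\Phi_1'(1)$) biorthogonal to the exponentials. However, there is a genuine gap at the decisive step where the control must be produced in $H^1(0,T)$. You define $H$ directly as a series of the biorthogonal functions, $H(t)=-\sum_k \frac{u_{0,k}}{\Phi_k'(1)}e^{-\lambda_k T}q_k(T-t)$, and propose to check convergence in $H^1(0,T)$ using \emph{explicit bounds on} $\norm{q_k}{H^1(0,T)}$, which you claim follow from the summability condition $\sum_k \lambda_k^{-1}<\infty$. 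Neither claim holds. The M\"untz-type summability condition only yields \emph{existence} of a biorthogonal family (Fattorini--Russell); the quantitative bounds needed for the cost estimates require the gap conditions $\sqrt{\lambda_{k+1}}-\sqrt{\lambda_k}\geq\gamma_{\text{min}}$ (and, for the lower bounds, matching upper gap conditions), which is exactly what Lemmas \ref{gap_lemma} and \ref{asymp_gap_lemma} establish through the Bessel-zero estimates. More importantly, all the available results of this type (Theorem \ref{thm-biortho1-gen}, i.e. \cite[Theorem 2.4]{cost-weak}, as well as \cite{Seid-Avdon,Tucsnak}) bound only the $L^2(0,T)$ norms of the biorthogonal functions; no $H^1$ bounds are available, and producing them would be a substantial additional task (differentiating a biorthogonal function costs roughly a factor $\lambda_k$, which would then have to be propagated through the whole cost estimate).

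The paper circumvents precisely this difficulty with a device absent from your proposal: it integrates by parts in the moment equation so as to obtain a moment problem for $H'$ rather than for $H$, namely $-\frac{\Phi_k'(1)}{\lambda_k}\int_0^T H'(t)e^{\lambda_k t}\,dt=\rho_k^0-\frac{\Phi_k'(1)}{\lambda_k}\big(H(T)e^{\lambda_k T}-H(0)\big)$, and then seeks $H$ with $H(0)=H(T)=0$. The control is defined as the primitive $H(t)=\int_0^t K(s)\,ds$ with $K=-\sum_k\frac{\lambda_k}{\Phi_k'(1)}\rho_k^0\sigma_k$, so that only $L^2$ bounds on the $\sigma_k$ are needed to conclude $K\in L^2(0,T)$, i.e. $H\in H^1(0,T)$. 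To guarantee $H(T)=0$ (without which the integrated moment problem is not equivalent to the original one), the family of exponents is enlarged by the artificial value $\lambda_0=0$, so that biorthogonality forces $\int_0^T\sigma_m\,dt=0$ for all $m\geq1$. Without this reformulation, or a genuine proof of $H^1$ bounds for biorthogonal families, your argument for assertions (i) and (ii) does not close. A secondary, smaller inaccuracy: in the regime $\nu(\alpha,\mu)\geq\tfrac12$ the correction factor $e^{-\C_u\nu^{4/3}(\ln\nu+\ln\frac1T)}$ does not stem merely from the expansion of the first zero $\jn$; it arises because the global upper gap $j_{\nu,2}-j_{\nu,1}\sim a\,\nu^{1/3}$ blows up as $\mu\to-\infty$, forcing the use of the asymptotic-gap result (Theorem \ref{thm-guichal-gen*}, from \cite{CMV-biortho-general}) with the uniform gap $2\pi$ valid only for $k>\nu$; the number $N_*\sim\nu$ of bad gaps combined with their size $\nu^{1/3}$ is what produces the exponent $4/3$.
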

The proof of Theorem \ref{control_thm1} will be given in Section \ref{sec-pf-control_thm1}.

\subsubsection{Results for a control acting at $x=0$}

The second main result of our work concerns the existence of a control for equation \eqref{pb-x-0}, and it reads as follows.

\begin{Theorem}\label{control_thm0}
Let $0\leq\alpha<1$ and $\mu<\mu(\alpha)$. Given any $T>0$ and $u_0\in L^2(0,1)$, the following assertions hold:
\begin{itemize}
	\item[{\bf (i)}] \textbf{Existence of a control.} There exists a control function $H\in H^1(0,T)$ such that the solution of \eqref{pb-x-0} satisfies $u(x,T)=0$.
	\item[{\bf (ii)}] \textbf{Upper bound of the cost.} There exists a constant $\C_u>0$, independent of $\alpha$, $\mu$ and $T$, such that the cost of null controllability for \eqref{pb-x-0} satisfies
	\begin{align*}
		\C _{ctr-bd} \leq \C_u\frac{\Gamma(1+\nu(\alpha,\mu))}{\sqrt{\mu(\alpha)}+\sqrt{\mu(\alpha)-\mu}} e^{\frac{\C_u}{T}} \left[1+  \sqrt{\mu(\alpha)-\mu}\right] e^{- \C_u \left( 1+\sqrt{\mu(\alpha)-\mu} \right)^2 T}  .
	\end{align*}
	\item[{\bf (iii)}] \textbf{Lower bound of the cost.} There exists a constant $\C_u>0$, independent of $\alpha$, $\mu$ and $T$, such that the cost of null controllability for \eqref{pb-x-0} satisfies: 
	\begin{itemize}
		\item[$\bullet$] in the case
			\begin{align*} 
				\nu(\alpha, \mu) \in \left[0, \frac{1}{2}\right], \quad \text{ that is, } \mu \in \left[ \frac{\alpha}{16} (3\alpha-4), \mu(\alpha)\right],
			\end{align*}
			then
			\begin{align*}
				\C_{ctr-bd} \geq \frac{\C_u}{\sqrt{\mu(\alpha)}+\sqrt{\mu(\alpha)-\mu}}\frac{1}{T^4}e^{-\C_u(1-\alpha)^2T}e^{\frac{\C_u}{T}};
			\end{align*}
	
		\item[$\bullet$] in the case 
			\begin{align*}
				\nu(\alpha, \mu) \in \bigg[\frac{1}{2}, +\infty \bigg), 	\quad \text{ that is, } \mu 	\in \bigg( - \infty, \frac{\alpha}{16} (3\alpha-4) \bigg],
			\end{align*}	
			then 
			\begin{align*}
				\C_{ctr-bd} &\geq \frac{\C_u}{\sqrt{\mu(\alpha)}+\sqrt{\mu(\alpha)-\mu}}\,e^{\frac{\C_u}{T}}e^{-\C_u \left[1+ \sqrt{\mu(\alpha)-\mu} \right]^2 T}\, e^{-\C_u \left[\sqrt{\mu(\alpha)-\mu} \right]^{4/3} \left(\ln\left[\sqrt{\mu(\alpha)-\mu}\right] + \ln \frac{1}{T} \right)}.
			\end{align*}
		\end{itemize}
\end{itemize}
\end{Theorem}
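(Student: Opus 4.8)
The plan is to follow the moment-method strategy announced in the introduction, adapting the approach used for the control at $x=1$ (Theorem \ref{control_thm1}) to the weighted boundary condition at $x=0$. Since $\mu<\mu(\alpha)$ strictly, we are in the subcritical regime and $\gamma=\gamma(\alpha,\mu)$ is well-defined by \eqref{def-gamma}, with the spectral decomposition of $\mathbf P_{\alpha,\mu}$ (to be recalled in Section \ref{preliminary_sec}) available through Bessel functions of order $\nu(\alpha,\mu)$. The eigenvalues $\lambda_{n}$ behave like $j_{\nu,n}^2$, so they grow quadratically and the associated eigenfunctions $\Phi_n$ form an orthonormal basis of $L^2(0,1)$.

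The first step will be to reduce null controllability to a moment problem. Writing $u_0=\sum_n a_n\Phi_n$ and expanding the solution of \eqref{pb-x-0} in the eigenbasis, I would integrate the equation against each $\Phi_n$ and use the weighted Dirichlet condition $(x^{-\gamma}u)(0,t)=H(t)$ together with an integration by parts near $x=0$. The boundary terms arising from the singularity are precisely controlled by the weight $x^{-\gamma}$; the key computation is to identify the trace factor, which produces a quantity of the form $\frac{1}{\sqrt{\mu(\alpha)}+\sqrt{\mu(\alpha)-\mu}}$ times a normalization constant involving $\Gamma(1+\nu)$. This explains the prefactors appearing in both the upper and lower cost bounds. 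The condition $u(\cdot,T)=0$ then becomes an infinite family of moment equalities of the form $\int_0^T H(t)e^{\lambda_n(T-t)}\,dt = -a_n e^{\lambda_n T}/c_n$, where $c_n$ is the boundary trace coefficient.

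The second step will be to solve this moment problem using a biorthogonal family $\{\psi_n\}$ to the exponentials $\{e^{-\lambda_n t}\}$ in $L^2(0,T)$, whose existence and sharp norm estimates are the recent results on biorthogonal sequences cited in the abstract. Setting $H(t)=\sum_n b_n\psi_n(T-t)$ with $b_n$ chosen to satisfy the moments, the cost is estimated by the $H^1(0,T)$ norms of the $\psi_n$, combined with the trace coefficients $c_n$ and the decay of the Fourier coefficients $a_n$ governed by the gap and magnitude of the $\lambda_n$. The upper bound \textbf{(ii)} follows by summing these contributions; the spectral gap $\lambda_{n+1}-\lambda_n\sim(1+\sqrt{\mu(\alpha)-\mu})$ and the lowest eigenvalue $\lambda_1\sim(1+\sqrt{\mu(\alpha)-\mu})^2$ produce the factors $[1+\sqrt{\mu(\alpha)-\mu}]$ and $e^{-\C_u(1+\sqrt{\mu(\alpha)-\mu})^2T}$, while the $e^{\C_u/T}$ factor comes from the short-time blow-up inherent to the biorthogonal norms.

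The lower bound \textbf{(iii)} requires testing against a single well-chosen initial datum (typically the first eigenfunction) and using the known sharp lower estimates for biorthogonal families; the two regimes for $\nu(\alpha,\mu)$ reflect the two behaviors of the Bessel-zero asymptotics and the transition occurs exactly at $\nu=1/2$, i.e. $\mu=\frac{\alpha}{16}(3\alpha-4)$, matching the case distinction in Theorem \ref{control_thm1}. \emph{The main obstacle} I anticipate is the careful tracking of the singular boundary trace: unlike the $x=1$ case, the weighted condition at the degenerate/singular point forces one to compute the exact asymptotic behavior of $\Phi_n$ and its flux as $x\to0^+$, and to show that the resulting coefficient $c_n$ neither vanishes nor blows up in a way that destroys summability. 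Controlling this trace uniformly in $n$, and extracting from it the sharp $\Gamma(1+\nu)/(\sqrt{\mu(\alpha)}+\sqrt{\mu(\alpha)-\mu})$ dependence, is the delicate point that distinguishes this theorem from its $x=1$ counterpart and that must be handled with the precise Bessel-function normalization rather than soft arguments.
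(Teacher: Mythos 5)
Your plan reproduces the paper's architecture almost exactly: reduction to a moment problem in the eigenbasis of Proposition \ref{prop-eigenelements}, identification of the weighted flux coefficient $r_k=\lim_{x\to 0^+}x^{\alpha+\gamma}\Phi_k'(x)$ (your $c_n$, see \eqref{rk}--\eqref{rk-expl}) via the small-argument behaviour of $J_{\nu}$ --- which is indeed the source of the prefactor $\Gamma(1+\nu(\alpha,\mu))/\bigl(\sqrt{\mu(\alpha)}+\sqrt{\mu(\alpha)-\mu}\bigr)$ --- then biorthogonal families for existence and the upper bound, and the choice $u_0=\Phi_1$ together with lower bounds for biorthogonal families, split at $\nu(\alpha,\mu)=1/2$, for the lower bound. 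This is precisely what Section \ref{sec-pf-control_thm0} does. However, there is a genuine gap in your Step 2. You set $H(t)=\sum_n b_n\psi_n(T-t)$, a direct superposition of biorthogonal functions solving the moment equations \eqref{moment_cond0} for $H$ itself, and you propose to estimate ``the $H^1(0,T)$ norms of the $\psi_n$''. The results you can invoke (Theorems \ref{thm-biortho1-gen} and \ref{thm-guichal-gen*}, i.e. \cite[Theorem 2.4]{cost-weak} and \cite[Theorem 2.2]{CMV-biortho-general}) control only $\Vert\psi_n\Vert_{L^2(0,T)}$: no sharp $H^1$ bounds for biorthogonal families exist in that literature, and nothing in your construction guarantees that $H$ lies in $H^1(0,T)$ with a quantified norm, whereas both admissibility in $\mathcal U_{\text{ad}}$ and the cost $\C^{H^1}$ are formulated in $H^1$. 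As stated, this step proves neither (i) nor (ii).

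The missing idea is to solve the moment problem at the level of $H'$ rather than $H$: integrating by parts turns \eqref{moment_cond0} into \eqref{moment_cond_H1_0}, which involves $H'$ together with the boundary values $H(0)$ and $H(T)$; one then defines $K:=\sum_k(\lambda_k/r_k)\rho_k^0\sigma_k$ and $H(t):=\int_0^tK(s)\,ds$ as in \eqref{H_def-0}, so that $H'=K$ and the $H^1$ cost is controlled by the sharp $L^2$ estimates of $\Vert\sigma_k\Vert_{L^2(0,T)}$ (note the extra factor $\lambda_k$ in the coefficients, produced by the integration by parts, which is then absorbed by Lemma \ref{lem-maj-serie}). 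This reformulation forces the normalisation $H(0)=H(T)=0$: $H(0)=0$ is automatic, while $H(T)=0$ is obtained by adjoining the artificial exponent $\lambda_0:=0$ to the exponential family, so that biorthogonality gives $\int_0^T\sigma_k\,dt=0$ for all $k\geq1$ (see \eqref{sigma_int}); without this device, the equations \eqref{moment_cond_H1_0} retain the unknown term $H(T)e^{\lambda_kT}$ and the construction does not close. A second, smaller omission: for $\nu(\alpha,\mu)\geq 1/2$ the global upper gap $\gamma_{\max}\sim\C\,\nu(\alpha,\mu)^{1/3}$ blows up as $\mu\to-\infty$ (see \eqref{gammamax-explose}), so the ``known'' G\"uichal-type lower estimate (Theorem \ref{thm-guichal-gen}) is not sharp in that regime; one needs the asymptotic-gap refinement of Lemma \ref{asymp_gap_lemma} combined with Theorem \ref{thm-guichal-gen*} to produce the stated correction term $\bigl[\sqrt{\mu(\alpha)-\mu}\,\bigr]^{4/3}\bigl(\ln\bigl[\sqrt{\mu(\alpha)-\mu}\,\bigr]+\ln\tfrac1T\bigr)$ in the exponent.
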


The proof of Theorem \ref{control_thm0} will be given in Section \ref{sec-pf-control_thm0}.

\section{Preliminary results}\label{preliminary_sec}
The strategy for proving Theorems \ref{control_thm1} and \ref{control_thm0} is based on the moment method (see \cite{FR1,FR2}) and requires the study of the associated Sturm-Liouville problem: one needs the expressions of the eigenvalues and eigenfunctions together with suitable estimates on the eigenvalues. We summarize here all these preliminary results that are useful to transform the controllability problems into moment problems and solve these questions. 

\subsection{Spectral properties of the operator ${\mathbf P}_{\alpha,\mu}$}
In order to transform the question of null controllability into a moment problem, we first study the eigenvalue problem associated to the degenerate/singular operator ${\mathbf P}_{\alpha,\mu}$:
\begin{equation}\label{eigenproblem}
	\begin{cases}
		\displaystyle {\mathbf P}_{\alpha,\mu} \phi = -  (x^\alpha \phi')'(x)  -\frac{\mu}{x^{2-\alpha}}\phi(x)  =\lambda \phi(x), & x\in (0,1)
		\\
		\phi (0)= 0 = \phi(1).   
	\end{cases}
\end{equation}
We prove:
\begin{Proposition}\label{prop-eigenelements}
Assume $0 \leq \alpha <1$ and $\mu \leq \mu(\alpha)$ and define
\begin{equation}\label{nu} 
	\nu(\alpha, \mu):= \frac{2}{2-\alpha} \, \sqrt{ \left(\frac{1-\alpha}{2} \right)^2 - \mu} = \frac{2}{2-\alpha} \, \sqrt{ \mu(\alpha) - \mu}.
\end{equation} 
For any $\nu \geq 0$, we denote by $J_{\nu }$ the Bessel function of first kind of order $ \nu$  and we denote
\begin{align*}
	0< j_{\nu,1} < j_{\nu,2} < \dots < \ j_{\nu,k} < \dots \to +\infty \text{ as } k \to +\infty
\end{align*}
the sequence of positive zeros of  $J_{\nu }$. Then the admissible eigenvalues $\lambda$ for problem \eqref{eigenproblem} are 
\begin{equation}\label{eigenv}
	\forall k \geq 1, \qquad \lambda_{\alpha, \mu,k} = \left(\frac{2-\alpha}{2} \right)^2 ( j_{\nu(\alpha, \mu),k})^2
\end{equation} 
and the corresponding (normalized) eigenfunctions are
\begin{equation*}
	\forall k \geq 1, \qquad  \Phi_{\alpha,\mu,k} (x)= \frac{\sqrt{2-\alpha}}{\vert J'_{\nu(\alpha, \mu)} (j_{\nu(\alpha, \mu) ,k}) \vert } x^{\frac{1-\alpha}{2}} J_{\nu(\alpha, \mu) } \left( j_{\nu(\alpha, \mu)  ,k} x^{\frac{2-\alpha}{2}} \right).
\end{equation*}
Moreover the family $( \Phi_{\alpha, \mu,k})_{k\geq 1}$ forms an orthonormal basis of $L^2(0,1)$. 
\end{Proposition}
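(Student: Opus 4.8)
The plan is to reduce the singular Sturm--Liouville problem \eqref{eigenproblem} to the classical Bessel equation by an explicit change of unknown and of variable, and then to read off the eigenelements from the known properties of Bessel functions.

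First I would rewrite \eqref{eigenproblem} in non-divergence form: expanding $(x^\alpha\phi')'$ and dividing by $x^\alpha$ turns the equation into
\begin{equation*}
	\phi'' + \frac{\alpha}{x}\phi' + \Big(\lambda x^{-\alpha} + \frac{\mu}{x^2}\Big)\phi = 0 .
\end{equation*}
This is an equation of the type $y'' + \frac{1-2a}{x}y' + \big(b^2c^2x^{2c-2} + \frac{a^2-\nu^2c^2}{x^2}\big)y=0$, whose solutions are $y(x)=x^a Z_\nu(bx^c)$ with $Z_\nu$ any Bessel function of order $\nu$. Matching the three structural coefficients forces
\begin{equation*}
	a=\frac{1-\alpha}{2}, \qquad c = \frac{2-\alpha}{2}, \qquad b = \frac{2\sqrt\lambda}{2-\alpha},
\end{equation*}
and the remaining identity $a^2-\nu^2c^2=\mu$ yields exactly $\nu=\nu(\alpha,\mu)$ as in \eqref{nu} (here the hypothesis $\mu\le\mu(\alpha)$ guarantees $\nu\ge 0$ real). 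Hence every solution of the ODE is a combination of $x^{\frac{1-\alpha}{2}}J_{\nu}(bx^{\frac{2-\alpha}{2}})$ and of the linearly independent singular solution $x^{\frac{1-\alpha}{2}}Y_{\nu}(bx^{\frac{2-\alpha}{2}})$ (resp. $J_{-\nu}$ when $\nu\notin\NN$).

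Next I would impose the boundary conditions. At $x=0$ the second solution blows up (like a negative power of $x$, or logarithmically when $\nu=0$) and does not belong to the natural energy space associated to the operator, the one in which the generalized Hardy inequality \eqref{generalized-hardy} makes the quadratic form coercive; discarding it leaves, up to a constant, $\phi(x)=x^{\frac{1-\alpha}{2}}J_{\nu}(bx^{\frac{2-\alpha}{2}})$. At $x=1$ we have $x^{\frac{2-\alpha}{2}}=1$, so the condition $\phi(1)=0$ becomes $J_\nu(b)=0$, i.e. $b$ must be a positive zero $j_{\nu,k}$ of $J_\nu$. Recalling $b=\tfrac{2\sqrt\lambda}{2-\alpha}$ this gives precisely the eigenvalues \eqref{eigenv} and the associated eigenfunctions $x^{\frac{1-\alpha}{2}}J_\nu(j_{\nu,k}x^{\frac{2-\alpha}{2}})$. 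For the normalization I would then compute $\int_0^1 x^{1-\alpha}J_\nu(j_{\nu,k}x^{\frac{2-\alpha}{2}})^2\,dx$ via the substitution $z=x^{\frac{2-\alpha}{2}}$, which reduces it to the standard Fourier--Bessel integral $\int_0^1 zJ_\nu(j_{\nu,k}z)^2\,dz=\tfrac12 J_\nu'(j_{\nu,k})^2$; this produces the constant $\sqrt{2-\alpha}/|J_\nu'(j_{\nu,k})|$ appearing in the statement. Finally, orthogonality of the $\Phi_{\alpha,\mu,k}$ is automatic since they are eigenfunctions of the self-adjoint operator $\mathbf P_{\alpha,\mu}$ associated to distinct eigenvalues, and completeness in $L^2(0,1)$ follows from the compactness of the resolvent (a consequence of the coercivity and compact embedding provided by \eqref{generalized-hardy}), or equivalently from the classical completeness of the Fourier--Bessel system transported through the change of variable.

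I expect the main obstacle to be the rigorous treatment of the endpoint $x=0$: one must specify the functional setting precisely and justify that the singular Bessel solution is excluded, which amounts to identifying the correct self-adjoint realization of the operator. This is delicate when $0\le\nu<1$ (limit-circle situation), where both $J_{\pm\nu}$ branches are square-integrable near $0$ and the boundary condition $\phi(0)=0$ genuinely selects the extension, and in the critical case $\mu=\mu(\alpha)$ ($\nu=0$), where the second solution degenerates into a logarithm and the energy space must be described with extra care.
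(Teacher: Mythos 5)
Your route is essentially the paper's: reduce \eqref{eigenproblem} to Bessel's equation (your Lommel-type coefficient matching is the same computation as the paper's explicit substitution $\phi(x)=x^{\frac{1-\alpha}{2}}\psi\big(\tfrac{2}{2-\alpha}\sqrt{\lambda}\,x^{\frac{2-\alpha}{2}}\big)$), discard the $Y_{\nu}$-branch at $x=0$, quantize via $\phi(1)=0\Rightarrow\tfrac{2\sqrt{\lambda}}{2-\alpha}=j_{\nu,k}$, and normalize through the Fourier--Bessel integral $\int_0^1 zJ_\nu(j_{\nu,k}z)^2\,dz=\tfrac12 J_\nu'(j_{\nu,k})^2$ (the paper derives this from the orthogonality relations together with the recurrence $xJ_\nu'(x)-\nu J_\nu(x)=-xJ_{\nu+1}(x)$; you quote it directly, which is fine). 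Your handling of the endpoint $x=0$ through membership in the energy space, and your completeness argument via self-adjointness and compactness of the resolvent, are if anything more careful than the paper's, which merely invokes $Y_\nu(0)=-\infty$ and asserts the basis property without proving completeness.

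There is, however, one genuine gap: you never rule out $\lambda\le 0$. Writing $b=\tfrac{2\sqrt{\lambda}}{2-\alpha}$ and matching against Bessel's equation presupposes $\lambda>0$; for $\lambda<0$ the same reduction yields the \emph{modified} Bessel equation (solutions $I_\nu$, $K_\nu$, which have no positive zeros), and for $\lambda=0$ an Euler equation, and both cases must be excluded before you can claim that \eqref{eigenv} lists \emph{all} admissible eigenvalues. The paper opens its proof with precisely this step: multiplying the equation by $\phi$, integrating by parts, and applying \eqref{generalized-hardy} gives $\lambda\ge 0$, and $\lambda=0$ is then excluded because the quadratic form $\int_0^1\big(x^\alpha\phi_x^2-\tfrac{\mu}{x^{2-\alpha}}\phi^2\big)\,dx$ is a norm on the energy space. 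Be aware that your fallback -- ``coercivity provided by \eqref{generalized-hardy}'' -- does not cover the critical case $\mu=\mu(\alpha)$: there the generalized Hardy inequality yields only non-negativity of the form, and the fact that it still defines a norm (hence that $\lambda=0$ is not an eigenvalue) requires the improved Hardy inequality of \cite{vanco1}, which is exactly what the paper cites at this point. The missing step is entirely fillable, but it must be made explicit, and in the critical case it needs an ingredient beyond \eqref{generalized-hardy}.
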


\begin{proof}
Let us first prove that that any admissible eigenvalue $\lambda$ satisfies $\lambda>0$. Let $\phi$ be an eigenfunction. Multiplying the equation by $\phi$ and integrating by parts over $(0,1)$, we get
\begin{align*}
	\int_0^1 \left( x^\alpha \phi_x^2 -\frac{\mu}{x^{2-\alpha}} \phi^2  \right)\, dx = \lambda \int_0^1 \phi ^2\, dx.
\end{align*}
Using $\mu \leq \mu(\alpha)$ and the generalized Hardy inequality \eqref{generalized-hardy}, we have 
\begin{align*}
	\lambda \int_0^1 \phi ^2\, dx \geq \int_0^1 \left( x^\alpha \phi_x^2 -\frac{\mu(\alpha)}{x^{2-\alpha}} \phi^2  \right)\, dx \geq 0.
\end{align*}
It follows that $\lambda\geq 0$ since $\phi \not \equiv 0$. Assume now that $\lambda=0$. Then 
\begin{align*}
	\int_0^1 \left( x^\alpha \phi_x^2 -\frac{\mu}{x^{2-\alpha}} \phi^2  \right)\, dx = \lambda \int_0^1 \phi ^2\, dx=0.
\end{align*}

This implies that $\phi \equiv 0$ since the left hand side of the above relation defines a norm on $H^{1,\mu}_{\alpha, 0}(0,1)$. (It is a consequence of \eqref{generalized-hardy} when $\mu < \mu(\alpha)$ and of \cite[Theorem 2.2]{vanco1} when $\mu=\mu(\alpha)$). Since $\phi \not \equiv 0$, it follows that $\lambda >0$. 

In view of the above discussion, in what follows we will always assume $\lambda >0$. Now, using the changes of variables
\begin{align*}
	\phi(x)= x^{\frac{1-\alpha}{2}} \psi \left(\frac{2}{2-\alpha} \sqrt{\lambda} x^{\frac{2-\alpha}{2}}\right) \quad \text{ and } \quad y=\frac{2}{2-\alpha}\sqrt{\lambda}x^{\frac{2-\alpha}{2}},
\end{align*}
one can easily see that $\phi $ satisfies \eqref{eigenproblem} if and only if $\psi$ is solution of 
\begin{equation*}
	\begin{cases}
		\displaystyle y^2  \psi''(y)  +y \psi ' (y) + \big(y^2 -\nu(\alpha,\mu)^2\big) \psi (y) =0,  &  \displaystyle y \in \left(0,\frac{2\sqrt{\lambda}}{2-\alpha} \right)
		\\
		\displaystyle\psi (0)=0 = \psi \left(\frac{2\sqrt{\lambda}}{2-\alpha} \right).
	\end{cases}
\end{equation*}

Hence $\psi$ is a solution of the Bessel equation of order $\nu(\alpha,\mu)$. A fundamental system of solutions of the above Bessel equation is given by $\{ J_{\nu(\alpha, \mu)} , Y_{\nu(\alpha, \mu)} \}$, where  $J_{\nu(\alpha, \mu)}$ and $Y_{\nu(\alpha, \mu)} $ are the Bessel's functions of order $\nu(\alpha,\mu)$, respectively of the first kind and of second kind. So $\Psi$ takes the form: 
\begin{align*}
	\forall y \in \left(0,\frac{2\sqrt{\lambda}}{2-\alpha} \right), \quad \Psi (y) = \C J_{\nu(\alpha,\mu)} (y) + \C' Y_{\nu(\alpha,\mu)} (y) ,
\end{align*}
for some  $\C, \C' \in \mathbb R$. It is known that $J_{\nu(\alpha, \mu)}(0)=0$ and $Y_{\nu(\alpha, \mu)} (0)=- \infty$ (see \cite[sections 5.3 and 5.4]{Lebedev}). In order to satisfy the boundary condition at $x=0$, it  follows that $\C'=0$. Thus
\begin{align*}
	\forall y \in \left(0,\frac{2\sqrt{\lambda}}{2-\alpha} \right), \quad \Psi (y) = \C J_{\nu(\alpha,\mu)} (y), 
\end{align*}
with  $\C\not =0$. Then the other boundary condition implies that 
\begin{align*}	
	J_{\nu(\alpha,\mu)} \left( \frac{2\sqrt{\lambda}}{2-\alpha} \right) =0.
\end{align*}
So one has 
\begin{align*}
	\frac{2\sqrt{\lambda}}{2-\alpha}= j_{\nu(\alpha, \mu),k},
\end{align*}
for some $k \in \mathbb N^\star$. Therefore the set of admissible eigenvalues is given by
\begin{align*}
	\lambda_{\nu(\alpha, \mu),k} = \left( \frac{2-\alpha}{2} \right)^2 j_{\nu(\alpha, \mu),k}^2, \qquad k \in \mathbb N ^* .
\end{align*}
As for the eigenfunctions, they take the form 
\begin{align*}
	\forall k \geq 1, \qquad  \Phi_{\alpha,\mu,k} (x)= \C_k x^{\frac{1-\alpha}{2}} J_{\nu(\alpha, \mu) } \left( j_{\nu(\alpha, \mu)  ,k} x^{\frac{2-\alpha}{2}} \right).
\end{align*}

It remains to show that $(\Phi_k)_{k\geq 1}$ forms an orthogonal family in $L^2(0,1)$ and choose $\C_k$ so that it becomes normalized. 
For any $n, m \geq 1$, let us compute 
\begin{align*}
	\int _0 ^1  \Phi _{\nu(\alpha,\mu), n} (x)  &\Phi _{\nu(\alpha,\mu), m}  (x) \, dx 
	\\
	&= \C_n\C_m \int _0 ^1 x^{1-\alpha} J_{\nu(\alpha, \mu) } \left( j_{\nu(\alpha, \mu),n} x^{\frac{2-\alpha}{2}} \right) J_{\nu(\alpha, \mu) } 
\left( j_{\nu(\alpha, \mu)  ,m} x^{\frac{2-\alpha}{2}} \right) \,dx
	\\
	&= \frac{2\C_n\C_m}{2-\alpha}  \int _0 ^1 y J_{\nu(\alpha, \mu) } \left( j_{\nu(\alpha, \mu)  ,n} y \right) J_{\nu(\alpha, \mu) } \left( j_{\nu(\alpha, \mu)  ,m} y \right) \,dx
	\\
	&=\frac{\C_n\C_m}{2-\alpha} \delta_{nm} [J_{\nu(\alpha,\mu)+1} (j_{\nu(\alpha,\mu) ,n} ) ]^2, 
\end{align*}
where we used the orthogonality property of Bessel's functions (see \cite[section 5.14]{Lebedev}).  Moreover, Bessel's functions satisfy the identity (see \cite[p. 45, equation (4)]{Watson}):
\begin{align*}
	x J_\nu '(x) - \nu J_\nu (x) = -x J_{\nu+1} (x), 
\end{align*}
yielding $J_{\nu+1}(j_{\nu,n})= - J_\nu'(j_{\nu,n})$. It follows that 
\begin{align*}
	\int _0 ^1  \Phi _{\nu(\alpha,\mu), n} (x)  \Phi _{\nu(\alpha,\mu), m}  (x) \, dx = \frac{\C_n\C_m}{2-\alpha} \delta_{nm} [J_{\nu(\alpha, \mu)}' (j_{\nu(\alpha, \mu) ,n} ) ]^2.
\end{align*}
Finally,  choosing 
\begin{align*}
	\C_k= \frac{\sqrt{2-\alpha}}{|J_{\nu(\alpha, \mu)}' (j_{\nu(\alpha, \mu) ,k} )|},
\end{align*}
the family $(\Phi_k)_{k\geq 1}$ is orthonormal in $L^2(0,1)$. 
\end{proof}

Next we give some estimates on the eigenvalues  that will be useful in the analysis of the problem. Referring to \cite[Section 15.53]{Watson}, we can give the following asymptotic expansion of the zeros of the Bessel function $J_{\nu}$, for any fixed $\nu\geq 0$:
\begin{align*}
	j_{\nu,k}=\left(k+\frac{\nu}{2}-\frac{1}{4}\right)\pi-\frac{4\nu^2-1}{8\left(k+\frac{\nu}{2}-\frac{1}{4}\right)\pi}+O\left(\frac{1}{k^3}\right),\;\textrm{ as } k\to +\infty.
\end{align*}

Moreover, in what follows we will also need the following bounds on $j_{\nu,k}$, which are  provided in \cite[Lemma 1]{Lorch}
\begin{equation}\label{bessel_zero_bound}
	\begin{cases}
		\displaystyle\forall \nu\in \left[0,\frac{1}{2}\right], & \forall k\geq 1, \quad \displaystyle\pi\left(k+\frac{\nu}{2}-\frac{1}{4}\right)\leq j_{\nu,k}\leq\pi\left(k+\frac{\nu}{4}-\frac{1}{8}\right), 
		\\[10pt]
		\displaystyle\forall \nu\in \left[\frac{1}{2},+\infty\right), & \forall k\geq 1, \quad  \displaystyle\pi\left(k+\frac{\nu}{4}-\frac{1}{8}\right)\leq j_{\nu,k}\leq\pi\left(k+\frac{\nu}{2}-\frac{1}{4}\right).
	\end{cases}
\end{equation}

The inequalities above become exact when $\nu =1/2$ (which corresponds, according to \eqref{nu}, to $\alpha=\mu=0$). We also recall the following result, whose proof is classical and can be found in \cite[Proposition 7.8]{Kom-Lor}.
\begin{Lemma}\label{eigen_gap_lemma}
Let $(j_{\nu,k})_{k\geq 1}$ be the sequence of positive zeros of the Bessel function $J_{\nu}$. Then the following holds:
\begin{itemize}
	\item The difference sequence $(j_{\nu,k+1}-j_{\nu,k})_k$ converges to $\pi$ as $k\to +\infty$.
		
	\item The sequence $(j_{\nu,k+1}-j_{\nu,k})_k$ is strictly decreasing if $|\nu| > 1/2$, strictly increasing if $|\nu| < 1/2$, and constant if $|\nu| = 1/2$.
\end{itemize}
\end{Lemma}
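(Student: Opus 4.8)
The plan is to treat the two assertions separately: the convergence statement follows immediately from the asymptotic expansion of $j_{\nu,k}$ recorded just above, while the monotonicity statement is best obtained by passing to the Liouville normal form of Bessel's equation and applying a Sturm comparison argument.

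For the first assertion I would substitute the asymptotic expansion into the difference $j_{\nu,k+1}-j_{\nu,k}$. The leading linear term $\left(k+\tfrac{\nu}{2}-\tfrac14\right)\pi$ contributes exactly $\pi$ to the difference, while the $O(1/k)$ correction term produces a difference that is $O(1/k^2)$ and the remainder contributes $O(1/k^3)$. Hence $j_{\nu,k+1}-j_{\nu,k}=\pi+O(1/k^2)\to\pi$ as $k\to+\infty$.

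For the second assertion, the key is the substitution $u(x):=\sqrt{x}\,J_\nu(x)$, which turns Bessel's equation into the normal form
\[
	u''(x)+q(x)\,u(x)=0,\qquad q(x)=1-\frac{\nu^2-\frac14}{x^2},
\]
and, since $\sqrt{x}>0$ on $(0,\infty)$, the positive zeros of $u$ are precisely the $j_{\nu,k}$. Differentiating gives $q'(x)=2\left(\nu^2-\frac14\right)/x^3$, so on $(0,\infty)$ the potential $q$ is strictly increasing when $|\nu|>\tfrac12$, strictly decreasing when $|\nu|<\tfrac12$, and constant ($q\equiv 1$) when $|\nu|=\tfrac12$. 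The critical case is immediate: the equation $u''+u=0$ has zeros spaced exactly $\pi$ apart, so the gap sequence is constant (equivalently, $J_{1/2}(x)=\sqrt{2/(\pi x)}\,\sin x$). For the two monotone cases I would argue by comparison. Fix $k$, set $g:=j_{\nu,k+1}-j_{\nu,k}$, and consider the shifted function $\tilde u(x):=u(x-g)$, which solves $\tilde u''+q(x-g)\tilde u=0$ and vanishes at $j_{\nu,k+1}$ and at $j_{\nu,k+1}+g$. If $|\nu|>\tfrac12$, then $q$ is strictly increasing, so $q(x)>q(x-g)$; on $(j_{\nu,k+1},\,j_{\nu,k+1}+g)$ the function $u$ obeys an equation with strictly larger potential than $\tilde u$, and both vanish at the common left endpoint $j_{\nu,k+1}$. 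By the Sturm comparison theorem $u$ must vanish strictly before $\tilde u$, so its next zero satisfies $j_{\nu,k+2}<j_{\nu,k+1}+g$, that is $j_{\nu,k+2}-j_{\nu,k+1}<j_{\nu,k+1}-j_{\nu,k}$, and the gap sequence is strictly decreasing. The case $|\nu|<\tfrac12$ is symmetric: now $q(x)<q(x-g)$, the function $u$ oscillates more slowly than $\tilde u$, its next zero comes strictly after $j_{\nu,k+1}+g$, and the gaps strictly increase.

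The main obstacle is to make the Sturm comparison step fully rigorous, in particular to obtain the \emph{strict} inequality rather than a weak one: this requires the version of the comparison theorem in which the two solutions share a left-hand zero and the potentials satisfy a strict inequality, together with the fact that the zeros of $J_\nu$ are simple (so the derivative of $u$ at the common zero is nonzero). Once the normal-form reduction and the sign analysis of $q'$ are in place, the remaining bookkeeping is routine.
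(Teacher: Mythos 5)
Your proposal is correct, and in fact it reconstructs the classical argument: the paper itself does not prove this lemma but simply cites \cite[Proposition 7.8]{Kom-Lor}, and the standard proof behind that citation is exactly your route --- pass to the Liouville normal form $u(x)=\sqrt{x}\,J_\nu(x)$, $u''+\bigl(1-(\nu^2-\tfrac14)x^{-2}\bigr)u=0$, compare $u$ with its shift by the gap via the strict Sturm comparison theorem (using simplicity of the zeros), and get the limit $\pi$ from the McMahon expansion stated just before the lemma. So your write-up is a faithful, self-contained version of the same approach rather than a different one.
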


In addition, using the bounds on the zeros of Bessel functions that we just presented, we can provide upper and lower bounds, uniform with respect to $k$, for the difference $\sqrt{\lambda_{\alpha,\mu,k+1}}-\sqrt{\lambda_{\alpha,\mu,k}}$ between the square roots of two successive eigenvalues of our original problem. These bounds will be crucial in the proof of the controllability result and in the estimation of the controllability cost. In more detail, we have the following result.

\begin{Lemma}\label{gap_lemma}
We have the following bounds for the difference $\sqrt{\lambda_{\alpha,\mu,k+1}}-\sqrt{\lambda_{\alpha,\mu,k}}$:
\begin{itemize}
	\item[(i)] When $\nu(\alpha,\mu)\in\left[0,\frac{1}{2}\right)$ that is when $\displaystyle{\mu \in \left( \frac{\alpha}{16} (3\alpha-4), \mu(\alpha)\right] },$ then 
	\begin{equation}\label{eigen-gap1}
		\forall k \geq 1, \qquad \frac{7\pi}{16}(2-\alpha)\leq\sqrt{\lambda_{\alpha,\mu,k+1}}-\sqrt{\lambda_{\alpha,\mu,k}}\leq \frac{(2-\alpha)}{2}\pi.
	\end{equation}
	\item[(ii)] When $\nu(\alpha,\mu)\in\left[\frac{1}{2},+\infty\right)$ that is when $ \displaystyle{ \mu \in \left( -\infty,  \frac{\alpha}{16} (3\alpha-4) \right]},$ then
	\begin{equation}\label{eigen-gap2}
		\forall k \geq 1, \qquad \frac{\pi}{2}(2-\alpha)\leq \sqrt{\lambda_{\alpha,\mu,k+1}}-\sqrt{\lambda_{\alpha,\mu,k}}\leq \frac{(2-\alpha)}{2}
	\Big( j_{\nu(\alpha,\mu),2} - j_{\nu(\alpha,\mu),1} \Big). 
	\end{equation}
\end{itemize}
\end{Lemma}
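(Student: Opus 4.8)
The plan is to reduce the statement to a question about the gaps between consecutive zeros of the Bessel function $J_{\nu(\alpha,\mu)}$, and then to combine the two ingredients already at hand: the explicit two-sided estimate \eqref{bessel_zero_bound} and the monotonicity and limit information of Lemma \ref{eigen_gap_lemma}. By the exact expression \eqref{eigenv} for the eigenvalues one has $\sqrt{\lambda_{\alpha,\mu,k}} = \frac{2-\alpha}{2}\, j_{\nu(\alpha,\mu),k}$, hence
\[
	\sqrt{\lambda_{\alpha,\mu,k+1}} - \sqrt{\lambda_{\alpha,\mu,k}} = \frac{2-\alpha}{2}\Big( j_{\nu(\alpha,\mu),k+1} - j_{\nu(\alpha,\mu),k}\Big).
\]
Writing $\nu = \nu(\alpha,\mu)$ and $g_k := j_{\nu,k+1} - j_{\nu,k}$, it therefore suffices to bound $g_k$ from above and below uniformly in $k$, and to multiply the resulting inequalities by $(2-\alpha)/2$.

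In case (i), where $\nu \in [0,\frac12)$, the lower bound comes directly from \eqref{bessel_zero_bound}: bounding $j_{\nu,k+1}$ below and $j_{\nu,k}$ above by the inequalities valid for $\nu\in[0,\frac12]$ gives
\[
	g_k \geq \pi\Big(k+1+\tfrac{\nu}{2}-\tfrac14\Big) - \pi\Big(k+\tfrac{\nu}{4}-\tfrac18\Big) = \pi\Big(\tfrac78+\tfrac{\nu}{4}\Big) \geq \tfrac{7\pi}{8},
\]
uniformly in $k$ and in $\nu\geq 0$. The upper bound is more delicate: the crude estimate \eqref{bessel_zero_bound} only produces $g_k \leq \pi(\tfrac98 - \tfrac{\nu}{4})$, which exceeds $\pi$ for small $\nu$ and is thus too weak. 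This is where Lemma \ref{eigen_gap_lemma} is needed: since $\nu<\tfrac12$, the sequence $(g_k)_k$ is strictly increasing and converges to $\pi$, so $g_k<\pi$ for every $k$. Multiplying the two bounds by $(2-\alpha)/2$ yields \eqref{eigen-gap1}.

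In case (ii), where $\nu\in[\frac12,+\infty)$, the situation is reversed and both bounds follow from Lemma \ref{eigen_gap_lemma}. If $\nu>\tfrac12$ the sequence $(g_k)_k$ is strictly decreasing to $\pi$, whence $\pi < g_k \leq g_1 = j_{\nu,2}-j_{\nu,1}$; if $\nu=\tfrac12$ it is constant and equal to $\pi = j_{1/2,2}-j_{1/2,1}$, so that $g_k=\pi$ and both inequalities hold with equality. In all cases $\pi \leq g_k \leq j_{\nu,2}-j_{\nu,1}$, and multiplying by $(2-\alpha)/2$ gives \eqref{eigen-gap2}.

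All the computations are elementary; the only genuinely delicate point --- and the main obstacle --- is the sharp bound toward $\pi$ (the upper bound in (i) and the lower bound in (ii)), which cannot be read off from the two-sided estimate \eqref{bessel_zero_bound} and truly requires the monotonicity and convergence of the Bessel gap sequence supplied by Lemma \ref{eigen_gap_lemma}.
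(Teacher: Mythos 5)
Your proof is correct and follows essentially the same route as the paper: the explicit formula \eqref{eigenv} reduces everything to the Bessel-zero gaps, the lower bound in case (i) is obtained from \eqref{bessel_zero_bound} by exactly the same computation, and the remaining three bounds come from the monotonicity and convergence statements of Lemma \ref{eigen_gap_lemma}. Your explicit treatment of the borderline case $\nu=\tfrac12$ and your remark that \eqref{bessel_zero_bound} alone cannot yield the upper bound in case (i) are sensible clarifications but do not constitute a different argument.
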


\begin{proof}
Let us start with $\nu(\alpha,\mu)\in[0,\frac 12]$. Concerning the lower bound, employing the estimates \eqref{bessel_zero_bound} we can easily obtain  
\begin{align*}
	\sqrt{\lambda_{\alpha,\mu,k+1}}-\sqrt{\lambda_{\alpha,\mu,k}} = \frac{2-\alpha}{2}\left(j_{\nu(\alpha,\mu),k+1}-\jnk\right) \geq \frac{(2-\alpha)\pi}{2}\left(\frac{\nu(\alpha,\mu)}{4}+\frac 78\right) \geq \frac{7\pi}{16}(2-\alpha),
\end{align*}
since $\nu(\alpha,\mu)\geq 0$. Concerning the upper bound, thanks to Lemma \ref{eigen_gap_lemma} we immediately have that $j_{\nu(\alpha,\mu),k+1}-\jnk<\pi$, which clearly implies 
\begin{align*}
	\sqrt{\lambda_{\alpha,\mu,k+1}}-\sqrt{\lambda_{\alpha,\mu,k}} \leq \frac{\pi}{2}(2-\alpha).
\end{align*}

For $\nu(\alpha,\mu)\in[\frac 12,+\infty)$, instead, thanks again to Lemma \ref{eigen_gap_lemma} we have that $j_{\nu(\alpha,\mu),k+1}-\jnk>\pi$, which clearly implies 
\begin{align*}
	\sqrt{\lambda_{\alpha,\mu,k+1}}-\sqrt{\lambda_{\alpha,\mu,k}} \geq \frac{\pi}{2}(2-\alpha).
\end{align*}
Finally, the upper bound is again a consequence of Lemma \ref{eigen_gap_lemma}:
\begin{align*} 
	\sqrt{\lambda_{\alpha,\mu,k+1}}-\sqrt{\lambda_{\alpha,\mu,k}} = \frac{2-\alpha}{2}\left(j_{\nu(\alpha,\mu),k+1}-\jnk\right) \leq  \frac{2-\alpha}{2}\left(j_{\nu(\alpha,\mu),2}-j_{\nu(\alpha,\mu),1}\right),
\end{align*}
since the sequence $\left(j_{\nu(\alpha,\mu),k+1}-\jnk\right)_k$ is nonincreasing in that case. Observe that it is the best upper bound (valid for any $k \geq 1$) that one can obtain here.  
\end{proof}

Notice that, using the fact that $0 \leq \alpha <1$, one can deduce the following estimates that are also {\it uniform with respect to $\alpha$ and $\mu$} :
\begin{itemize} 
	\item when $\nu(\alpha,\mu)\in\left[0,\frac{1}{2}\right),$ 
	\begin{equation}\label{eigen-gap1_unif}
		\forall k \geq 1, \qquad \frac{7\pi}{16}\leq \sqrt{\lambda_{\alpha,\mu,k+1}}-\sqrt{\lambda_{\alpha,\mu,k}}
	 \leq \pi;
	\end{equation}
	\item when $\nu(\alpha,\mu)\in\left[\frac{1}{2},+\infty\right),$ 
	\begin{equation}\label{eigen-gap2_unif}
		\forall k \geq 1, \qquad \frac{\pi}{2}\leq \sqrt{\lambda_{\alpha,\mu,k+1}}-\sqrt{\lambda_{\alpha,\mu,k}}. 
	\end{equation}
\end{itemize}

On the other hand, let us observe that in the case $\nu(\alpha,\mu)\in\left[\frac{1}{2},+\infty\right)$, the upper estimate given in Lemma \ref{gap_lemma} is not satisfactory. Indeed, one can quote the following inequality from \cite{QuWong}:
\begin{equation*}
	\forall \nu>0, \forall n\geq 1, \qquad \nu - \frac{a_n}{2^{1/3}} \nu^{1/3} < j_{\nu,n} < \nu - \frac{a_n}{2^{1/3}} \nu^{1/3} + \frac{3}{20} a_n^2 \frac{2^{1/3}}{\nu^{1/3}},
\end{equation*}
where $a_n$ is the $n$-th negative zero of the Airy function. It follows that there exists $a >0$ such that
\begin{align*}
	j_{\nu,2} - j_{\nu,1} \sim a \, \nu ^{1/3} \quad \text{ as } \nu \to +\infty .
\end{align*}
Consequently, for any $\alpha \in [0,1)$, 
\begin{equation}\label{gammamax-explose}
	j_{\nu(\alpha, \mu),2} - j_{\nu(\alpha, \mu),1} \sim a \nu (\alpha, \mu) ^{1/3} \to + \infty  \quad \text{ as } \mu \to -\infty .
\end{equation}

Moreover, this upper estimate being the best possible one valid for any $k \geq 1$ (see the proof of Lemma \ref{gap_lemma}), it is of course not possible to improve it. 

Therefore, in order to get sharp estimates of the cost of controllability, it will be important to provide some \textit{better} upper estimates.  To this end, we will use the following complementary {\it asymptotic} estimates that is only valid for $k$ large enough but that has the advantage of being uniform with respect to $\alpha$ and $\mu$:  
\begin{Lemma}\label{asymp_gap_lemma}
When $\nu(\alpha,\mu)\in\left[\frac{1}{2},+\infty\right),$ for any $k > \nu (\alpha,\mu)$, we have
\begin{equation*}
	\sqrt{\lambda_{\alpha,\mu,k+1}}-\sqrt{\lambda_{\alpha,\mu,k}} \leq 2 \pi. 
\end{equation*}
\end{Lemma}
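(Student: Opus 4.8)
The plan is to reduce the claim to a statement about the gaps between consecutive zeros of a single Bessel function and then to control those gaps by a Sturm comparison argument. Write $\nu := \nu(\alpha,\mu) \geq 1/2$. By \eqref{eigenv} we have $\sqrt{\lambda_{\alpha,\mu,k}} = \frac{2-\alpha}{2}\, j_{\nu,k}$, and since $0 \leq \alpha < 1$ we have $\frac{2-\alpha}{2} \leq 1$. Hence it suffices to establish the uniform bound $j_{\nu,k+1} - j_{\nu,k} \leq 2\pi$ for every integer $k > \nu$.

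First I would pass to the Liouville (normal) form of Bessel's equation: the function $u(x) := \sqrt{x}\, J_\nu(x)$ solves $u'' + q(x)\, u = 0$ on $(0,\infty)$ with $q(x) = 1 - \frac{\nu^2 - 1/4}{x^2}$, and its positive zeros are exactly the $j_{\nu,k}$. Because $\nu \geq 1/2$, the quantity $\nu^2 - 1/4$ is nonnegative, so $q$ is nondecreasing on $(0,\infty)$; in particular $q(x) \geq q(j_{\nu,k})$ for all $x \in [j_{\nu,k}, j_{\nu,k+1}]$. The key mechanism is the elementary comparison lemma: if $u'' + q\,u = 0$ with $q \geq m^2 > 0$ on an interval and $a < b$ are two consecutive zeros of $u$, then $b - a \leq \pi/m$ (compare $u$ with $z(x) = \sin(m(x-a))$ and track the sign of the Wronskian $W = u z' - u' z$, for which $W' = (q-m^2)\,u z$). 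Applying this with $m = 1/2$ on $[j_{\nu,k}, j_{\nu,k+1}]$ yields $j_{\nu,k+1} - j_{\nu,k} \leq 2\pi$, provided we can guarantee $q(j_{\nu,k}) \geq 1/4$.

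The remaining point is therefore to show that $q(j_{\nu,k}) \geq 1/4$, equivalently $j_{\nu,k}^2 \geq \tfrac{4}{3}(\nu^2 - \tfrac14)$, whenever $k > \nu$. Here the hypothesis $k > \nu$ is essential: it is exactly what excludes the transition regime $k \lesssim \nu$ in which the gaps blow up like $\nu^{1/3}$, consistent with \eqref{gammamax-explose}. For this I would invoke the lower bound from \eqref{bessel_zero_bound} valid for $\nu \geq 1/2$, namely $j_{\nu,k} \geq \pi(k + \tfrac{\nu}{4} - \tfrac18)$; combined with $k > \nu$ it gives $j_{\nu,k} > \pi(\tfrac{5\nu}{4} - \tfrac18)$. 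It then remains to verify the purely algebraic inequality $\pi^2(\tfrac{5\nu}{4} - \tfrac18)^2 \geq \tfrac{4}{3}(\nu^2 - \tfrac14)$ for all $\nu \geq 1/2$, which reduces to a quadratic in $\nu$ with positive leading coefficient and negative discriminant, hence holds for every real $\nu$.

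I expect the main (though modest) obstacle to be the careful bookkeeping in this last step: one must check that the lower bound from \eqref{bessel_zero_bound} together with $k > \nu$ is strong enough to beat $\tfrac43(\nu^2 - \tfrac14)$ uniformly down to $\nu = 1/2$, and to treat the borderline case $\nu = 1/2$ separately (there $q \equiv 1$ and $j_{1/2,k} = k\pi$, so the gaps equal $\pi \leq 2\pi$). Everything else is a routine application of classical Sturm theory, and the resulting bound is uniform in $\alpha$ and $\mu$, as required.
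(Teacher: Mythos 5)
Your proof is correct, but it takes a genuinely different route from the paper's. The paper disposes of this lemma in one line: it reduces, exactly as you do, to the Bessel-zero statement via $\sqrt{\lambda_{\alpha,\mu,k}}=\tfrac{2-\alpha}{2}\,j_{\nu,k}$ and $\tfrac{2-\alpha}{2}\leq 1$, and then simply \emph{quotes} Lemma 5.1 of \cite{CMV-the-cost-strong}, which asserts $j_{\nu,k+1}-j_{\nu,k}\leq 2\pi$ for all $\nu\geq\tfrac12$ and $k>\nu$. You instead reprove that cited inequality from scratch: Liouville form $u=\sqrt{x}\,J_\nu(x)$ with $u''+\bigl(1-\tfrac{\nu^2-1/4}{x^2}\bigr)u=0$, monotonicity of the potential for $\nu\geq\tfrac12$, a Wronskian/Sturm comparison against $\sin\bigl(\tfrac12(x-a)\bigr)$ giving a gap at most $2\pi$ once the potential is $\geq\tfrac14$ at the left zero, and the Lorch--Muldoon bound \eqref{bessel_zero_bound} combined with $k>\nu$ to guarantee $j_{\nu,k}^2\geq\tfrac43\bigl(\nu^2-\tfrac14\bigr)$. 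I checked the two places where such an argument could break: the Wronskian form of Sturm comparison you invoke is the correct one (with $W(a)=0$, $W'\geq 0$, and $W$ at the comparison zero strictly negative, yielding the contradiction), and your final quadratic $\pi^2\bigl(\tfrac{5\nu}{4}-\tfrac18\bigr)^2-\tfrac43\bigl(\nu^2-\tfrac14\bigr)$ indeed has positive leading coefficient $\tfrac{25\pi^2}{16}-\tfrac43$ and negative discriminant $\tfrac{16}{9}-2\pi^2$, so it is positive for all real $\nu$; the separate treatment of $\nu=\tfrac12$ is harmless but unnecessary, since there $q\equiv 1\geq\tfrac14$. What your route buys is self-containedness: it relies only on classical ODE theory plus the bound \eqref{bessel_zero_bound} already quoted in the paper, and it makes transparent why the hypothesis $k>\nu$ is exactly what excludes the transition regime responsible for the $\nu^{1/3}$ blow-up in \eqref{gammamax-explose}. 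What the paper's route buys is brevity and consistency with its general policy of importing the finer spectral estimates of \cite{CMV-the-cost-strong} rather than rederiving them.
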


\begin{proof}
It directly follows from the definition of $\lambda_{\alpha,\mu,k}$ and Lemma 5.1 in \cite{CMV-the-cost-strong} that says that the zeros the Bessel functions satisfy 
\begin{align*}
	\forall \nu \geq \frac{1}{2},\quad \forall k >\nu, \qquad j_{\nu, k+1} -j_{\nu,k} \leq 2\pi.
\end{align*}
\end{proof}

\section{Well-posedness of the controllability problems}\label{well_pos_sec}

This section deals with the well-posedness of the models we are considering. To this end, let us first recall the functional framework associated to the purely degenerate operator $\mathbf P_{\alpha}$ (see for instance \cite{sicon2008}). 

\subsection{Functional framework}
For all $0 \leq \alpha <1$, we set 
\begin{align*}
	H^1_\alpha (0,1) := \Big\{ u \in L^2(0,1) \cap H^1_{loc} ((0,1]) \;\Big|\; x^{\alpha/2} u_x \in L^2 (0,1) \Big\}.
\end{align*}

Obviously, for any $u \in H^1_\alpha (0,1)$, the trace at $x=1$ exists. Moreover, in the case $0\leq \alpha <1$ (which is the one considered in this paper), it can be proved that the trace at $x=0$ also makes sense. This allows to introduce the space
\begin{align*}
	H^1_{\alpha,0} (0,1) := \Big\{ u \in H^1_\alpha (0,1)  \;\Big|\; u(0)=0=u(1) \Big\}.
\end{align*}

Next we introduce the functional setting associated to the  degenerate/singular operator $\mathbf P_{\alpha,\mu}$ (see \cite{vanco1}). For any $\mu \leq \mu(\alpha)$,  we define 
\begin{align*}
	H^{1, \mu}_\alpha (0,1) := \left\{ u \in L^2(0,1) \cap H^1_{loc} ((0,1]) \;\bigg|\; \int_0^1 \left(x^\alpha u_x^2 - \frac{\mu}{x^{2-\alpha}} u^2  
\right) dx <+\infty \right\}
\end{align*}
and 
\begin{align*}
	H^{1, \mu}_{\alpha,0} (0,1) := \Big\{ u \in H^{1,\mu}_\alpha (0,1) \;\Big|\; u(0)=0=u(1) \Big\}.
\end{align*}

In the case of a sub-critical parameter $\mu < \mu(\alpha)$, thanks to the generalized Hardy inequality \eqref{generalized-hardy}, it is easy to see that $H^{1,\mu}_{\alpha,0} (0,1)=H^1_{\alpha,0} (0,1)$. On the contrary, for the critical value $\mu = \mu(\alpha)$, the space is enlarged (see \cite{Va-Zu} for this observation in the case $\alpha=0$): 
\begin{align*}
	H^1_{\alpha,0} (0,1)  \underset{\not =}{\subset} H^{1,\mu(\alpha)}_{\alpha,0} (0,1).
\end{align*}
Next we define 
\begin{align*}
	H^{2, \mu}_{\alpha} (0,1) :=  \Big\{ u \in H^{1,\mu}_\alpha (0,1) \cap H^2_{loc} ((0,1]) \;\Big|\; (x^\alpha u_x)_x +\frac{\mu}{x^{2-\alpha}} u \in L^2(0,1) \Big\}.
\end{align*}
Finally,  the domain of the operator $\mathbf P_{\alpha,\mu}$ is given by 
\begin{align*}
	D(\mathbf P_{\alpha,\mu} ):= H^{2, \mu}_{\alpha} (0,1) \cap H^{1,\mu}_{\alpha,0} (0,1) .
\end{align*}

\subsection{Homogeneous boundary conditions and a source term}\label{sec-homo-bc}

Let us first consider the system with homogeneous boundary conditions and a source term
\begin{equation}\label{pb-auxi}
	\begin{cases}
		\displaystyle w_t -  (x^\alpha w_{x})_x -\frac{\mu}{x^{2-\alpha}}w =  f(x,t), & (x,t)\in Q
		\\
		w(0,t)=0, & t\in(0,T) 
		\\
		w(1,t)=0, & t\in(0,T)
		\\
		w(x,0)=w_0(x),  & x\in(0,1).
	\end{cases}
\end{equation}

Under the assumption \eqref{hyp-parameters} and for  any $w_0 \in L^2(0,1)$ and $f\in L^2((0,1)\times(0,T))$, problem \eqref{pb-auxi} is well-posed (see \cite{vanco1}) and we state the following definitions: 
\begin{Definition}
We have the following notions of solution:
\begin{itemize}
	\item[a)] Given $w_0 \in L^2(0,1)$ and $f \in L^2((0,1)\times (0,T))$, one defines the mild solution of \eqref{pb-auxi}
	\begin{align*}
		w \in \mathcal C^0 ([0,T]; L^2(0,1)) \cap L^2(0,T; H_{\alpha,0}^{1,\mu}(0,1))
	\end{align*}
	as the one given by the variation formula:
	\begin{align*}
		w(x,t) = e^{t \mathbf P_{\alpha,\mu}} w_0 + \int_0^t e^{(t-s) \mathbf P_{\alpha,\mu}} f(x,s) ds.
	\end{align*}
	\item[b)] We say that a function 
	\begin{align*}
		w \in \mathcal C^0 ([0,T]; H_{\alpha,0}^{1,\mu}(0,1)) \cap H^1(0,T; L^2(0,1)) \cap L^2(0,T; D(\mathbf P_{\alpha,\mu}))
	\end{align*}
	is a strict solution of \eqref{pb-auxi} if it satisfies the equation a.e. in $(0,1) \times (0,T)$ and the boundary and initial conditions for all $t\in [0,T]$ and $x \in [0,1]$. 
\end{itemize}
\end{Definition}

Notice that, if $w_0 \in H_{\alpha,0}^{1,\mu}(0,1)$, then the mild solution of \eqref{pb-auxi} is also the unique strict solution. 

\subsection{Non homogeneous boundary condition at $x=1$}

Next we turn to the boundary value problem  \eqref{pb-x-1}. To define the solution of \eqref{pb-x-1}, we transform it  into a problem with homogeneous boundary conditions and a source term. Let us introduce
\begin{align*}
	\forall x \in [ 0, 1 ], \qquad  p(x):= x^{q} \quad \text{ where }  \quad q:= \frac{1-\alpha}{2}+ \sqrt{\mu(\alpha)-\mu}.
\end{align*}
Observe that $p(0)=0$, $p(1)=1$ and 
\begin{align*}
	(x^\alpha p')'(x) + \frac{\mu}{x^{2-\alpha}} p(x) =0.
\end{align*}
Formally, if $u$ is a solution of \eqref{pb-x-1}, then the function defined by
\begin{equation}\label{def-v-prelim}
	v(x,t)=u(x,t)-\frac{p(x)}{p(1)}H(t) = u(x,t) - x^{q} H(t)
\end{equation}
is solution of 
\begin{equation}\label{pb-prelim}
	\begin{cases}
		\displaystyle v_t -  (x^\alpha v_{x})_x -\frac{\mu}{x^{2-\alpha}}v =  -\frac{p(x)}{p(1)} H'(t), & (x,t)\in Q
		\\
		v(0,t)=0, & t\in(0,T)
		\\
		v(1,t)=0, & t\in(0,T)
		\\
		\displaystyle v(x,0)=u_0(x) -\frac{p(x)}{p(1)} H(0), & x\in(0,1).
	\end{cases}
\end{equation}
Reciprocally, given $h \in L^2(0,T)$, consider the solution of 
\begin{align*}
	\begin{cases}
		\displaystyle v_t - (x^\alpha v_{x})_x  -\frac{\mu}{x^{2-\alpha}}v =  -\frac{p(x)}{p(1)} h(t), & (x,t)\in Q
		\\
		v(0,t)=0,  & t\in(0,T)
		\\
		v(1,t)=0, & t\in(0,T)
		\\
		v(x,0)=v_0(x), & x\in(0,1).
	\end{cases}
\end{align*}
Then the function $u$ defined by
\begin{equation*}
	u(x,t)=v(x,t)+\frac{p(x)}{p(1)} \int_0^t h (\tau) d\tau
\end{equation*}
satisfies 
\begin{equation*}
	\begin{cases}
		\displaystyle u_t -  (x^\alpha u_{x})_x -\frac{\mu}{x^{2-\alpha}}u =0, & (x,t)\in Q
		\\
		u(0,t)=0, & t\in(0,T)
		\\
		\displaystyle u(1,t)=\int_0^t h (\tau) d\tau, & t\in(0,T)
		\\
		u(x,0)=v_0(x), & x\in(0,1).
	\end{cases}
\end{equation*}

Let now $H$ be given in $H^1(0,T)$. The results of section \ref{sec-homo-bc} apply in particular to problem \eqref{pb-auxi} when one chooses 
\begin{equation}\label{def-f-v0}
	f(x,t)=  - \frac{p(x)}{p(1)} H'(t) \quad \text{ and } \quad v_0(x)=u_0(x) - \frac{p(x)}{p(1)}H(0).
\end{equation}
This allows us to define in a suitable way the solution of \eqref{pb-x-1}:

\begin{Definition}
We have the following notions of solution:
\begin{itemize}
	\item[a)] We say that  $ u \in \mathcal C^0 ([0,T]; L^2(0,1)) \cap L^2(0,T; H^{1,\mu}_{\alpha}(0,1))$ is the mild solution of \eqref{pb-x-1} if $v$ defined by \eqref{def-v-prelim} and \eqref{def-f-v0} is the mild solution of \eqref{pb-prelim}.

	\item[b)] We say that $ u \in \mathcal C^0 ([0,T]; H^{1,\mu}_{\alpha}(0,1)) \cap H^1(0,T; L^2(0,1)) \cap L^2(0,T; H^{2,\mu}_{\alpha}(0,1))$ is the strict solution of \eqref{pb-x-1} if $v$ defined by \eqref{def-v-prelim} and \eqref{def-f-v0} is the strict solution of \eqref{pb-prelim}. 
\end{itemize}
\end{Definition}

We  deduce
\begin{Proposition} \label{prop-wp}
Assume that $0\leq \alpha<1$ and $\mu \leq \mu(\alpha)$. 
\begin{itemize}
	\item[a)] Given $u_0 \in L^2(0,1)$ and $H \in H^1(0,T)$, problem \eqref{pb-x-1} admits a unique mild solution.

	\item[b)] Given $u_0 \in H^{1,\mu}_{\alpha}(0,1)$ such that $u_0(0)=0$ and $H \in H^1(0,T)$ such that $u_0(1)=H(0)$, problem \eqref{pb-x-1} admits a unique strict solution. In particular, this holds true when $u_0 \in H^{1,\mu}_{\alpha,0}(0,1)$ and  $H \in H^1(0,T)$ is such that $H(0)=0$.
\end{itemize}
\end{Proposition}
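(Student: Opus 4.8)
The plan is to prove Proposition~\ref{prop-wp} by reducing problem \eqref{pb-x-1} to the auxiliary problem \eqref{pb-auxi} with homogeneous boundary conditions, whose well-posedness is already available from \cite{vanco1}, and then transferring the conclusions back through the change of unknown \eqref{def-v-prelim}. By the two notions of solution introduced above, $u$ is the mild (resp.\ strict) solution of \eqref{pb-x-1} precisely when $v = u - x^q H$ is the mild (resp.\ strict) solution of \eqref{pb-prelim}; and \eqref{pb-prelim} is exactly \eqref{pb-auxi} with the source term $f$ and initial datum $v_0$ prescribed in \eqref{def-f-v0}. Hence everything reduces to checking that $f$ and $v_0$ meet the regularity hypotheses required by the known well-posedness of \eqref{pb-auxi}, and to controlling the affine corrector $x^q H(t)$.

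For part (a), I would first record the elementary properties of $p(x)=x^q$. Since $q=\frac{1-\alpha}{2}+\sqrt{\mu(\alpha)-\mu}>0$, the function $p$ is bounded on $[0,1]$ with $p(0)=0$ and $p(1)=1$. Given $H\in H^1(0,T)$ we have $H'\in L^2(0,T)$ and $H(0)$ well defined, so $f(x,t)=-x^q H'(t)\in L^2(Q)$ and $v_0=u_0-x^q H(0)\in L^2(0,1)$. The cited well-posedness of \eqref{pb-auxi} then yields a unique mild solution $v$ of \eqref{pb-prelim}, whence $u=v+x^q H$ is the unique mild solution of \eqref{pb-x-1}; that $u$ lies in the announced class $\mathcal C^0([0,T];L^2(0,1))\cap L^2(0,T;H^{1,\mu}_\alpha(0,1))$ follows once the corrector $x^q H(t)$ is seen to belong to it, using $H\in \mathcal C^0([0,T])$ and $x^q\in H^{1,\mu}_\alpha(0,1)$ (verified below).

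For part (b), the extra task is to upgrade $v_0\in L^2$ to $v_0\in H^{1,\mu}_{\alpha,0}(0,1)$, since this is the hypothesis under which the mild solution of \eqref{pb-auxi} coincides with the strict one. The boundary conditions are immediate: $v_0(0)=u_0(0)-p(0)H(0)=0$ and $v_0(1)=u_0(1)-p(1)H(0)=H(0)-H(0)=0$, using the compatibility assumptions $u_0(0)=0$, $u_0(1)=H(0)$ together with $p(0)=0$, $p(1)=1$. As $H^{1,\mu}_\alpha(0,1)$ is a vector space and $u_0\in H^{1,\mu}_\alpha(0,1)$, it only remains to verify that $p=x^q\in H^{1,\mu}_\alpha(0,1)$.

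This last verification is the delicate point, and it is exactly why $p$ was chosen to solve the homogeneous stationary equation. A direct computation gives
\[
	x^\alpha (p')^2-\frac{\mu}{x^{2-\alpha}}\,p^2=(q^2-\mu)\,x^{\alpha+2q-2},
\]
where $\alpha+2q-2=2\sqrt{\mu(\alpha)-\mu}-1$. In the subcritical range $\mu<\mu(\alpha)$ this exponent exceeds $-1$, so the integral over $(0,1)$ converges. In the critical case $\mu=\mu(\alpha)$ the exponent equals $-1$, but then $q=\frac{1-\alpha}{2}$ forces $q^2=\mu(\alpha)=\mu$, so the integrand vanishes identically and the integral is trivially finite; in both cases $p\in H^{1,\mu}_\alpha(0,1)$, hence $v_0\in H^{1,\mu}_{\alpha,0}(0,1)$. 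Invoking the well-posedness of \eqref{pb-auxi} for such data produces the strict solution $v$ of \eqref{pb-prelim}, and translating back gives the strict solution $u=v+x^q H$. To place $u$ in the strict class it suffices to note that the corrector $x^q H(t)$ belongs to $L^2(0,T;H^{2,\mu}_\alpha(0,1))$, which again uses that $p$ solves $(x^\alpha p')'+\frac{\mu}{x^{2-\alpha}}p=0\in L^2(0,1)$, so that $p\in H^{2,\mu}_\alpha(0,1)$; the final ``in particular'' assertion is the special case $u_0\in H^{1,\mu}_{\alpha,0}(0,1)$, $H(0)=0$, for which the compatibility conditions hold automatically. I expect the only genuine obstacle to be precisely this membership in the critical case, where the two terms of the energy are separately of order $x^{-1}$ and only their exact cancellation saves integrability; everything else is routine bookkeeping on the data $f$ and $v_0$.
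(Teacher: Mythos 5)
Your proposal is correct and takes essentially the same route as the paper: both reduce \eqref{pb-x-1} to the homogeneous problem \eqref{pb-auxi} through the corrector $\widetilde H(x,t)=x^{q}H(t)$ and the definitions \eqref{def-v-prelim}--\eqref{def-f-v0}, the whole proof amounting to the observation that $\widetilde H \in \mathcal C^0([0,T];H^{1,\mu}_{\alpha}(0,1))\cap H^1(0,T;L^2(0,1))\cap L^2(0,T;H^{2,\mu}_{\alpha}(0,1))$. Your explicit verification that $p=x^{q}\in H^{1,\mu}_{\alpha}(0,1)\cap H^{2,\mu}_{\alpha}(0,1)$ --- in particular the exact cancellation $(q^2-\mu)x^{\alpha+2q-2}\equiv 0$ in the critical case $\mu=\mu(\alpha)$ --- together with the compatibility checks $v_0(0)=v_0(1)=0$, simply fills in the details that the paper declares to ``follow immediately.''
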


The proof of Proposition \ref{prop-wp} follows immediately noticing that 
\begin{align*}
	\widetilde H(x,t):= \frac{p(x)}{p(1)} H(t)
\end{align*}
satisfies 
\begin{align*}
	\widetilde H \in \mathcal C^0 ([0,T]; H^{1,\mu}_{\alpha}(0,1)) \cap H^1(0,T; L^2(0,1)) \cap L^2(0,T; H^{2,\mu}_{\alpha}(0,1)).
\end{align*}

\subsection{Non homogeneous boundary condition at $x=0$}

Finally we study to the boundary value problem  \eqref{pb-x-0}. To define its solution, as we did for \eqref{pb-x-1} before, we transform it into a problem with homogeneous boundary conditions and a source term. Let us introduce
\begin{align}\label{qdef}
	\forall x \in [0,1], \qquad  p(x):= 1-x^{q} \quad \text{ where } \quad q:= 2\sqrt{\mu(\alpha)-\mu}.
\end{align}

Observe that $q=0$ in the critical case $\mu=\mu(\alpha)$. (See also Remark \ref{rq-q} later). So we assume here that $\mu < \mu(\alpha)$. Then notice that $p(0)=1$, $p(1)=0$. Moreover, one can readily check that 
\begin{align}\label{Label}
	\big[x^\alpha(x^\gamma p)'\big]'(x) + \frac{\mu}{x^{2-\alpha-\gamma}} p(x) =0,
\end{align}
where $\gamma$ is the parameter introduced in \eqref{def-gamma}. Formally, if $u$ is a solution of \eqref{pb-x-0}, then the function defined by
\begin{equation}\label{def-v-prelim-bis}
	v(x,t)=u(x,t)-x^\gamma\frac{p(x)}{p(0)}H(t) = u(x,t) - x^\gamma(1-x^q) H(t)
\end{equation}
is solution of 
\begin{equation}\label{pb-prelim-bis}
	\begin{cases}
		\displaystyle v_t -  (x^\alpha v_{x})_x -\frac{\mu}{x^{2-\alpha}}v = F(x,t), & (x,t)\in Q
		\\
		v(0,t)=0, & t\in(0,T)
		\\
		v(1,t)=0, & t\in(0,T)
		\\
		\displaystyle v(x,0)=v_0(x), & x\in(0,1),
	\end{cases}
\end{equation}
where we denoted
\begin{align*}
	F(x,t):=-x^\gamma\frac{p(x)}{p(0)} H'(t) \quad \text{ and } \quad v_0(x):=u_0(x) -x^\gamma\frac{p(x)}{p(0)} H(0).
\end{align*}

Observe that \eqref{pb-x-0} actually implies $(x^{-\gamma}v)(0,t)=0$ which, in particular, gives $v(0,t)=0$ as written in \eqref{pb-prelim-bis}. Indeed, notice that $v$ in \eqref{def-v-prelim-bis} is given explicitly by 
\begin{align*}
	v(x,t) = \sum_{k\geq 1} v_k(t)\Phi_k(x),
\end{align*}
with 
\begin{align*}
	&v_k(t):=v_{k,0}e^{-\lambda_k t} + \int_0^t F_k(s)e^{-\lambda_k(t-s)}\,ds
	\\
	&v_{k,0} := \int_0^1 v_0(x)\Phi_k(x)\,dx
	\\
	&F_k(t) = \int_0^1 F(x,t)\Phi_k(x)\,dx.
\end{align*}
Then, as $x\to 0$ we have
\begin{align*}
	x^{-\gamma}v(x,t) &= \sum_{k\geq 1} v_k(t) x^{\frac{1-\alpha}{2}-\gamma}J_\nu\left(\jnk x^{\frac{2-\alpha}{2}}\right) = \sum_{k\geq 1} v_k(t) x^{\sqrt{\mu(\alpha)-\mu}}J_\nu\left(\jnk x^{\frac{2-\alpha}{2}}\right)\to 0.
\end{align*}
In view of that, we get 
\begin{align*}
	x^{-\gamma}u(x,t) = x^{-\gamma}v(x,t)+\frac{p(x)}{p(0)}H(t)\to H(t),\;\textrm{ as }\; x\to 0.
\end{align*}
Reciprocally, given $h \in L^2(0,T)$, consider the solution of 
\begin{align*}
	\begin{cases}
		\displaystyle v_t - (x^\alpha v_{x})_x  -\frac{\mu}{x^{2-\alpha}}v = -x^\gamma\frac{p(x)}{p(0)} h(t), & (x,t)\in Q
		\\
		v(0,t)=0, & t\in(0,T)
		\\
		v(1,t)=0, & t\in(0,T)
		\\
		v(x,0)=v_0(x), & x\in(0,1).
	\end{cases}
\end{align*}
Then the function $u$ defined by
\begin{equation*}
	u(x,t)=v(x,t)+x^\gamma\frac{p(x)}{p(0)} \int_0^t h (\tau) d\tau
\end{equation*}
satisfies 
\begin{equation*}
	\begin{cases}
		\displaystyle u_t -  (x^\alpha u_{x})_x -\frac{\mu}{x^{2-\alpha}}u =0, & (x,t)\in Q
		\\[5pt]
		\displaystyle(x^{-\gamma}u)(0,t)=\int_0^t h (\tau) d\tau, & t\in(0,T)
		\\[5pt]
		u(1,t)= 0, & t\in(0,T)
		\\[5pt]
		u(x,0)=v_0(x), & x\in(0,1).
	\end{cases}
\end{equation*}

Let now $H$ be given in $H^1(0,T)$. The results of section \ref{sec-homo-bc} apply in particular to problem \eqref{pb-auxi} when one chooses 
\begin{equation}\label{def-f-v0-bis}
	f(x,t)=  - x^\gamma\frac{p(x)}{p(0)} H'(t) \quad \text{ and } \quad v_0(x)=u_0(x) - x^\gamma\frac{p(x)}{p(0)}H(0).
\end{equation}
This allows us to define in a suitable way the solution of \eqref{pb-x-0}:

\begin{Definition}
We have the following notions of solution:
\begin{itemize}	
	\item[a)] We say that $ u \in \mathcal C^0 ([0,T]; L^2(0,1)) \cap L^2(0,T; H^{1,\mu}_{\alpha}(0,1))$ is the mild solution of \eqref{pb-x-0} if $v$ defined by \eqref{def-v-prelim-bis} and \eqref{def-f-v0-bis} is the mild solution of \eqref{pb-prelim-bis}.
	
	\item[b)] We say that $ u \in \mathcal C^0 ([0,T]; H^{1,\mu}_{\alpha}(0,1)) \cap H^1(0,T; L^2(0,1)) \cap L^2(0,T; H^{2,\mu}_{\alpha}(0,1))$ is the strict solution of \eqref{pb-x-0} if $v$ defined by \eqref{def-v-prelim-bis} and \eqref{def-f-v0-bis} is the strict solution of \eqref{pb-prelim-bis}. 
\end{itemize}	
\end{Definition}

We  deduce
\begin{Proposition} 
Assume that $0\leq \alpha<1$ and $\mu < \mu(\alpha)$. 
\begin{itemize}
	\item[a)] Given $u_0 \in L^2(0,1)$ and $H \in H^1(0,T)$, problem \eqref{pb-x-0} admits a unique mild solution.
	
	\item[b)] Given $u_0 \in H^{1,\mu}_{\alpha}(0,1)$  and $H \in H^1(0,T)$ such that $(x^{-\gamma}u_0)(0)=H(0)$ and $u_0(1)=0$, problem \eqref{pb-x-0} admits a unique strict solution. In particular, this holds true when $u_0 \in H^{1,\mu}_{\alpha}(0,1)$ such that $(x^{-\gamma}u_0)(0)=u_0(1)=0$.
\end{itemize}	
\end{Proposition}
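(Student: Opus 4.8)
The plan is to follow exactly the scheme used for Proposition \ref{prop-wp}, namely to reduce \eqref{pb-x-0} to the auxiliary problem \eqref{pb-prelim-bis} with homogeneous Dirichlet conditions and a source term, for which well-posedness is already at our disposal (see Section \ref{sec-homo-bc} and \cite{vanco1}). This reduction has in fact been carried out above: with the lifting $\widetilde H(x,t):=x^\gamma\frac{p(x)}{p(0)}H(t)$ and $v=u-\widetilde H$, the function $v$ solves \eqref{pb-prelim-bis} with source $F=-x^\gamma\frac{p(x)}{p(0)}H'(t)$ and datum $v_0=u_0-x^\gamma\frac{p(x)}{p(0)}H(0)$. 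The structural fact that makes this work is the identity \eqref{Label}: it says precisely that the stationary spatial operator annihilates $x^\gamma p$, so that the lifting contributes only the time derivative $-\widetilde H_t$ and produces no surviving singular spatial term in the equation for $v$. First I would record this, normalize $p(0)=1$, and observe that, as in the $x=1$ case, the correspondence $v\mapsto u=v+\widetilde H$ is a bijection once $H$ is fixed, so that uniqueness for \eqref{pb-x-0} reduces to uniqueness for \eqref{pb-prelim-bis}.

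For the mild solution in (a), given $u_0\in L^2(0,1)$ and $H\in H^1(0,T)$, I would produce $v$ as the mild solution of \eqref{pb-prelim-bis} given by the variation-of-constants formula of Section \ref{sec-homo-bc} and then set $u=v+\widetilde H$, as prescribed by the Definition. The point requiring real care, and the one genuinely different from the $x=1$ case, is that the lifting $x^\gamma p$ is strongly singular at the origin: it behaves like $x^\gamma$ with $\gamma=\tfrac{1-\alpha}{2}-\sqrt{\mu(\alpha)-\mu}$, which tends to $-\infty$ as $\mu\to-\infty$ and so need not be square integrable near $x=0$. I would therefore not try to put $\widetilde H$ in the energy space directly, but rather work through the explicit spectral expansion displayed above, $v(x,t)=\sum_{k\ge1}v_k(t)\Phi_k(x)$. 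The only quantities that actually enter are the scalar products $c_k:=\int_0^1 x^\gamma p(x)\,\Phi_k(x)\,dx$, and these are finite and summable: near $0$ one has $\Phi_k(x)\sim b_k\,x^{\gamma+q}$ while $2\gamma+q=1-\alpha>0$, so the integrand is integrable; moreover, integrating by parts and using \eqref{Label} together with $\Phi_k(1)=p(1)=0$, one finds $\lambda_{\alpha,\mu,k}\,c_k$ equal to a boundary term at $x=0$ proportional to $q\,b_k$, whence $c_k=O(b_k/\lambda_{\alpha,\mu,k})$. This decay, combined with $H'\in L^2(0,T)$ and parabolic smoothing, yields convergence of the series in $\mathcal C^0([0,T];L^2(0,1))\cap L^2(0,T;H^{1,\mu}_{\alpha,0}(0,1))$ and hence the claimed mild solution.

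For the strict solution in (b), I would exploit the compatibility conditions $(x^{-\gamma}u_0)(0)=H(0)$ and $u_0(1)=0$. Since $x^{-\gamma}\widetilde H(\cdot,0)=\frac{p(\cdot)}{p(0)}H(0)\to H(0)$ as $x\to0$, the first condition forces $x^{-\gamma}v_0\to0$ at the origin, while $u_0(1)=0$ together with $p(1)=0$ gives $v_0(1)=0$; hence $v_0\in H^{1,\mu}_{\alpha,0}(0,1)$ with the correct vanishing weighted traces. The regularity $H\in H^1(0,T)$ transmits to $F=-x^\gamma\frac{p}{p(0)}H'$ the time regularity needed, so \eqref{pb-prelim-bis} admits a unique strict solution by the results recalled in Section \ref{sec-homo-bc} (the mild solution of \eqref{pb-auxi} being strict when the datum lies in $H^{1,\mu}_{\alpha,0}$). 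Transferring back through $u=v+\widetilde H$ gives the unique strict solution of \eqref{pb-x-0}, and the stated particular case is simply the choice $(x^{-\gamma}u_0)(0)=u_0(1)=0$, which forces $H(0)=0$.

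The main obstacle, as indicated, is the control of the lifting $x^\gamma p$ at the degenerate/singular endpoint $x=0$: unlike in Proposition \ref{prop-wp}, where the lifting $x^{q}H(t)$ is as regular as the eigenfunctions, here $x^\gamma p$ need not belong to $H^{1,\mu}_\alpha(0,1)$, so the argument cannot be closed by a bare regularity statement for $\widetilde H$ and must instead be routed through the spectral coefficients $c_k$ and the annihilation identity \eqref{Label}. This is exactly the place where the weighted boundary condition $(x^{-\gamma}u)(0)=H(t)$, rather than an ordinary Dirichlet datum, is indispensable for the construction to make sense.
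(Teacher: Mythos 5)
You follow exactly the paper's reduction $u=v+\widetilde H$ with $\widetilde H(x,t)=x^\gamma\frac{p(x)}{p(0)}H(t)$, but at the decisive step you part ways with it: the paper's entire proof consists of the assertion that $\widetilde H$ belongs to $\mathcal C^0([0,T];H^{1,\mu}_{\alpha}(0,1))\cap H^1(0,T;L^2(0,1))\cap L^2(0,T;H^{2,\mu}_{\alpha}(0,1))$, so that \eqref{pb-prelim-bis} falls directly within the $L^2$ framework of Section \ref{sec-homo-bc}, whereas you deny precisely this regularity and reroute the argument through the coefficients $c_k=\int_0^1 x^\gamma p\,\Phi_k\,dx$. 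Your objection is factually correct in part of the parameter range: near $x=0$ one has $x^\gamma p(x)\sim x^\gamma$ with $\gamma=\frac{1-\alpha}{2}-\sqrt{\mu(\alpha)-\mu}$, so $x^\gamma p\notin L^2(0,1)$ as soon as $\gamma\le -\frac12$, i.e. $\mu\le-\frac{3-2\alpha}{4}$. So you have put your finger on a genuine soft spot which the paper's one-line proof does not address.

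The trouble is that your substitute argument does not close the proof, because its key quantitative step is wrong. The identity $\lambda_k c_k=q\,b_k$ (with $b_k$ the coefficient in $\Phi_k(x)\sim b_k x^{\gamma+q}$ as $x\to0$) is correct, but you then treat $b_k$ as if it were bounded in $k$. It is not: the $L^2$-normalization gives $b_k=\frac{\sqrt{2-\alpha}}{|J_{\nu}'(j_{\nu,k})|}\,\frac{j_{\nu,k}^{\nu}}{2^{\nu}\Gamma(1+\nu)}$ with $|J_{\nu}'(j_{\nu,k})|\asymp j_{\nu,k}^{-1/2}$, hence $b_k\asymp j_{\nu,k}^{\nu+1/2}$ and $c_k\asymp j_{\nu,k}^{\nu-3/2}$, where $\nu=\nu(\alpha,\mu)$. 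Thus $(c_k)_k\in\ell^2$ if and only if $\nu(\alpha,\mu)<1$, which is exactly the condition $\mu>-\frac{3-2\alpha}{4}$, i.e. exactly the regime where $x^\gamma p$ does lie in $L^2(0,1)$ and where no detour was needed in the first place. In the regime that motivated your detour, $\sum_k c_k^2=+\infty$, and your series already diverges in $L^2(0,1)$ at $t=0$ whenever $H(0)\neq 0$ (which part a) permits), since $v_k(0)=\rho_k^0-c_kH(0)$; so the claimed convergence in $\mathcal C^0([0,T];L^2(0,1))\cap L^2(0,T;H^{1,\mu}_{\alpha,0}(0,1))$ is false. There is also a structural obstruction that no construction of $v$ can circumvent: by the Definition in Section \ref{well_pos_sec}, both $u$ and $v=u-\widetilde H$ must lie in $\mathcal C^0([0,T];L^2(0,1))$, which is impossible once $\widetilde H(\cdot,t)\notin L^2(0,1)$, and the source $F=-x^\gamma\frac{p}{p(0)}H'$ then fails to be in $L^2(Q)$, so "the mild solution of \eqref{pb-prelim-bis}" in the sense of Section \ref{sec-homo-bc} is not even defined. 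Consequently your argument proves, at best, existence of a coefficientwise (transposition-type) solution, a strictly weaker statement than the Proposition; where the Proposition's notion of solution is meaningful, your construction collapses back to the paper's direct argument. What your (correct) observation really reveals is that for $\mu\le-\frac{3-2\alpha}{4}$ the stated notion of solution is vacuous for $H\not\equiv0$, so a repair requires modifying the functional setting (as done in \cite{biccari2} for $\alpha=0$), not merely the proof.
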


The proof of Proposition \ref{prop-wp} follows immediately noticing that 
\begin{align*}
	\widetilde H(x,t):= x^\gamma\frac{p(x)}{p(0)} H(t)
\end{align*}
satisfies 
\begin{align*}
	\widetilde H \in \mathcal C^0 ([0,T]; H^{1,\mu}_{\alpha}(0,1)) \cap H^1(0,T; L^2(0,1)) \cap L^2(0,T; H^{2,\mu}_{\alpha}(0,1)).
\end{align*}
\begin{Remark}  \label{rq-q}
As a final remark we observe that, when $\mu=\mu(\alpha)$, the value of $q$ that we defined in \eqref{qdef} is zero. This means that, in the case of critical potentials, the change of variables introduced for defining the solution to our problem is the trivial one. Therefore, in what follows, when dealing with \eqref{pb-x-0} we shall always assume $\mu< \mu(\alpha)$. Notice, however, that this assumption is not a limitation. Indeed, for critical potentials we
do not expect our equation \eqref{pb-x-0} to be well posed, at least not with the boundary conditions that we are imposing. This behavior had already been observed in \cite{biccari2} for purely singular operators ($\alpha=0$), and a more detailed discussion on this point can be found in the Appendix A of that mentioned work.
\end{Remark}

\section{Proof of Theorem \ref{control_thm1}}\label{sec-pf-control_thm1}

This Section is devoted to the proof of our first result Theorem \ref{control_thm1} on the boundary controllability for \eqref{pb-x-1}. 

The proof will employ the classical moment method (see \cite{FR1,FR2}). This procedure is based on the explicit construction of the control $H$, given in terms of a family $(\sigma_{\alpha,\mu,m}(t))_{m\geq 1}$ {\it biorthogonal} in $L^2(0,T)$ to the family of real exponential $( e^{\lambda_{\alpha,\mu,n}})_{n\geq1}$, that is
\begin{equation}\label{biortho+1}
	\forall m, n \geq 0, \quad \int _0 ^T \sigma_{\alpha,\mu,m} (t) e^{\lambda_{\alpha,\mu,n}t} \, dt = \delta _{mn}=
	\begin{cases} 1 \text{ if } m=n , \\ 0 \text{ if } m \neq n . \end{cases}
\end{equation}

In order to show the existence of such a sequence, we will use \cite[Theorem 2.4]{cost-weak}, whose proof has been inspired by the works of Seidman-Avdonin-Ivanov \cite{Seid-Avdon} and Tucsnak-Tenenbaum \cite{Tucsnak}. In this part, it will be fundamental that the eigenvalues associated to our problem fulfill the gap conditions
\begin{align*}
	\forall n\geq 1, \qquad \sqrt{\lambda_{\alpha,\mu,n+1}} - \sqrt{\lambda_{\alpha,\mu,n}} \geq \gamma_{\text{min}}.
\end{align*}

Furthermore, to define properly the control $H$, we also need to provide some sharp lower bound of the norm $\Vert \sigma_{\alpha,\mu,m}(t)_{m\geq 1} \Vert_{L^2(0,T)}$, which will be obtained as a consequence of this second spectral estimate
\begin{equation}\label{hyp-gmax-intro}
	\forall n\geq 1, \qquad \sqrt{\lambda_{\alpha,\mu,n+1}} - \sqrt{\lambda_{\alpha,\mu,n}} \leq \gamma_{\text{max}}.
\end{equation}

When $\nu(\alpha,\mu)\in\left[0,\frac{1}{2}\right]$, \eqref{hyp-gmax-intro} holds true for some $\gamma_{max}$ that is independent of $\alpha$ and $\mu$. Here we will use \cite[Theorem 2.5]{cost-weak}, inspired from Guichal \cite{Guichal}.

When $\nu(\alpha,\mu)\in\left[\frac{1}{2},+\infty\right)$, \eqref{hyp-gmax-intro} still holds true but with $\gamma_{max}$ that tends to $+\infty$ as $\mu \to -\infty$. So one could still use \cite[Theorem 2.5]{cost-weak} but this would not give a sharp estimate. For this reason, we complement \eqref{hyp-gmax-intro} by the \textit{better} asymptotic estimate given in Lemma \ref{asymp_gap_lemma}. Then we will use \cite[Theorem 2.2]{CMV-biortho-general}. 

Therefore, according to the above discussion, our proof will be organized into the following steps: 
\begin{itemize}
	\item[•] \textbf{Step 1.} Following the classical approach of \cite{FR1,FR2}, we first reduce our control problem to a moment problem.
	\item[•] \textbf{Step 2.} We  give a formal solution, using the properties of the spectrum of the operator $\mathbf{P}_{\alpha,\mu}$.
	\item[•] \textbf{Step 3.} We prove the existence of the control, its regularity (in $H^1(0,T)$) and also give an upper bound of the cost of controllability.
	\item[•] \textbf{Step 4.} We finally derive a lower bound of the cost of controllability. 
\end{itemize}

Let $\alpha$ and $\mu$ be given such that $0\leq \alpha<1$ and $\mu \leq \mu(\alpha)$. For simplicity in the notations, we denote in the following by $\Phi_k$ (instead of $\Phi_{\alpha,\mu,k}$) and $\lambda_k$ (instead of $\lambda_{\alpha,\mu,k}$) the eigen-elements given by Proposition \ref{prop-eigenelements}, and by $\sigma_m$ (instead of $\sigma_{\alpha,\mu,m}$) the biorthogonal family. Besides, also for simplicity in the notations, we will denote in a generic way by $\C$ all the constants (independent of $k$, $\alpha$, $\mu$ and $T$) that appear in the calculus. We stress that the value of $\C$ may change from line to line.

\subsection{Reduction to a moment problem} 
In this part, we treat the problem with formal computations. We will present a rigorous justification in a second moment. 

Let us start expanding the initial condition $u_0\in L^2(0,1)$ in the basis of the eigenfunctions $(\Phi_k)_{k\geq 1}$. Indeed, we know that there exists a sequence $(\,\rho_k^0)_{k\geq 1}\in \ell^2(\NN^*)$ such that, for all $x\in(0,1)$,
\begin{align*}
	u_0(x)=\sum_{k\geq 1}\rho_k^0\Phi_k(x), \quad \rho_k^0:=\int_0^1 u_0(x)\Phi_k(x)\,dx,\quad k\geq 1.
\end{align*}
Next, we expand also the solution $u$ to \eqref{pb-x-1} as
\begin{align*}
	u(x,t)=\sum_{k\geq 1}\beta_k(t)\Phi_k(x), \quad (x,t)\in Q,
\end{align*}
with
\begin{align*}
	\beta_k(t):=\int_0^1 u(x,t)\Phi_k(x)\,dx, \quad k\geq 1
\end{align*}
and
\begin{align*}
	\sum_{k\geq 1}\beta_k(t)^2<+\infty.
\end{align*}
Therefore, the controllability condition $u(x,T)=0$ becomes 
\begin{align}\label{controllability_moment}
\forall k\geq 1,\;\;\; \beta_k(T)=0.
\end{align}
Moreover, we notice that the function $v_k(x,t):=\Phi_k(x)e^{\lambda_k(t-T)}$ solves the adjoint problem
\begin{align}\label{heat_hardy_adj}
\renewcommand*{\arraystretch}{1.3}
	\left\{\begin{array}{ll}
		\displaystyle v_{k,t}+(x^\alpha v_{k,x})_x+\frac{\mu}{x^{2-\alpha}}v_k=0, & (x,t)\in Q
		\\ 
		v_k(0,t)=v_k(1,t)=0, & t\in (0,T).
	\end{array}\right.
\end{align}
Combining \eqref{pb-x-1} and \eqref{heat_hardy_adj} we obtain
\begin{align*}
	0=& \int_Q \bigg[v_k\left(u_t-(x^\alpha u_x)_x-\frac{\mu}{x^{2-\alpha}}u\right)+u\left(v_{k,t}+(x^\alpha v_{k,x})_x+\frac{\mu}{x^{2-\alpha}}v_k\right)\bigg]\,dxdt
	\\
	=& \int_0^1 v_ku\,\Big|_0^T\,dx - \int_0^T x^\alpha u_xv_k\,\Big|_0^1\,dt + \int_0^T x^\alpha v_{k,x}u\,\Big|_0^1\,dt
	\\
	=& \int_0^1 v_k(x,T)u(x,T)\,dx - \int_0^1 v_k(x,0)u_0(x)\,dx + \int_0^T H(t)v_{k,x}(1,t)\,dt 
	\\
	=& \int_0^1 u(x,T)\Phi_k(x)\,dx - e^{-\lambda_k T}\int_0^1 u_0(x)\Phi_k(x)\,dx + e^{-\lambda_kT}\Phi_k'(1)\int_0^T H(t)e^{\lambda_k t}\,dt
	\\
	=& \,\beta_k(T) - \rho_k^0e^{-\lambda_k T} + e^{-\lambda_k T}\Phi_k'(1)\int_0^T H(t)e^{\lambda_kt}\,dt.
\end{align*}
Then,  \eqref{controllability_moment} yields
\begin{align}\label{moment_cond}
\forall k\geq 1,\;\;\;\Phi_k'(1)\int_0^T H(t)e^{\lambda_kt}\,dt = \rho_k^0. 
\end{align}

On the other hand, since we are looking for a solution of the moment problem belonging to $H^1(0,T)$, instead of \eqref{moment_cond} we would rather be interested in a condition involving the derivative of the function $H$. This condition can be obtained integrating by parts in \eqref{moment_cond}, as follows
\begin{align*}
	\int_0^T H(t)e^{\lambda_kt}\,dt = \frac{1}{\lambda_k}H(t)e^{\lambda_k t}\,\bigg|_0^T-\frac{1}{\lambda_k}\int_0^T H'(t)e^{\lambda_k t}\,dt.
\end{align*}
Therefore, $H'(t)$ has to satisfy
\begin{align}\label{moment_cond_H1}
	\forall k\geq 1,\;\;\;-\frac{\Phi_k'(1)}{\lambda_k}\int_0^T H'(t)e^{\lambda_kt}\,dt = \rho_k^0 -\frac{\Phi_k'(1)}{\lambda_k}\left(H(T)e^{\lambda_k T}-H(0)\right).
\end{align}
We will provide a solution to the above problem which satisfies $H(0)=H(T)=0$. 

\subsection{Formal solution of the moment problem}
We exhibit  here a formal solution of the moment problem \eqref{moment_cond_H1}. 

\subsubsection{Formal definition of the control $H$}
Set artificially $ \lambda_{ 0} := 0 ,$ so that we have now a sequence $(\lambda_{k})_{k \geq 0}$. We assume for the moment that we are able to construct a family $(\sigma_{m})_{m \geq 0}$ of functions $\sigma_{m} \in L^2(0,T)$, which is biorthogonal to the family $(e^{\lambda_{n} t })_{n\geq 0}$. Observe that for $n=0$, using $\lambda_0 = 0 $, \eqref{biortho+1} implies
\begin{equation}\label{sigma_int}
	 \forall m \geq 1, \quad \int _0 ^T \sigma_{ m} (t) \, dt = 0.
\end{equation}
Then let us define the function $H$ as follows:
\begin{align}\label{H_def}
	H(t):=\int_0^t K(s)\,ds, \;\;\;\textrm{ with }\;K(t):=-\sum_{k\geq 1} \frac{\lambda_k}{\Phi_k'(1)}\rho_k^0\sigma_k(t).
\end{align}

It is straightforward that, if $K\in L^2(0,T)$, then $H\in H^1(0,T)$ with $H(0)=0$ and $H'(t)=K(t)$. Moreover thanks to \eqref{sigma_int} we have, at least formally,
\begin{align*}
	H(T)= -\int_0^T \sum_{k\geq 1} \frac{\lambda_k}{\Phi_k'(1)}\rho_k^0\sigma_k(s)\,ds = - \sum_{k\geq 1} \frac{\lambda_k}{\Phi_k'(1)}\rho_k^0\int_0^T\sigma_k(s)\,ds=0.
\end{align*}
Finally,
\begin{align*}
	-\frac{\Phi_k'(1)}{\lambda_k}\int_0^T H'(t)e^{\lambda_k t}\,dt &= -\frac{\Phi_k'(1)}{\lambda_k}\int_0^T K(t)e^{\lambda_k t}\,dt = \frac{\Phi_k'(1)}{\lambda_k}\int_0^T \left(\sum_{\ell\geq 1} \frac{\lambda_{\ell}}{\Phi_{\ell}'(1)}\rho_{\ell}^0\sigma_{\ell}(t)\right)e^{\lambda_k t}\,dt
	\\
	&= \frac{\Phi_k'(1)}{\lambda_k}\sum_{\ell\geq 1}\frac{\lambda_{\ell}}{\Phi_{\ell}'(1)}\rho_{\ell}^0\int_0^T \sigma_{\ell}(t)e^{\lambda_k t}\,dt = \frac{\Phi_k'(1)}{\lambda_k}\sum_{\ell\geq 1}\frac{\lambda_{\ell}}{\Phi_{\ell}'(1)}\rho_{\ell}^0\delta_{k,\ell} = \rho_k^0,
\end{align*}
and the moment problem \eqref{moment_cond_H1} is formally satisfied.

\subsubsection{If regular, the control $H$ drives the solution from $u_0$ to zero}
Let us assume for now that $K \in L^2(0,T)$ (and, consequently $H$ introduced in \eqref{H_def} belongs to $H^1(0,T)$). We show here that $H$ is able to drive the solution to \eqref{pb-x-1} from the initial state $u_0$ to zero in time $T$. To this end, let us remind the change of variables 
\begin{align*}
	v(x,t):=u(x,t)-\frac{p(x)}{p(1)}H(t),\;\;\; p(x):=x^q, \;\;\; q=\frac{1-\alpha}{2}+\sqrt{\mu(\alpha)-\mu},
\end{align*}
that transforms our original equation \eqref{pb-x-1} in
\begin{align*}
\renewcommand*{\arraystretch}{1.3}
	\left\{\begin{array}{ll}
		\displaystyle v_t-(x^\alpha v_x)_x-\frac{\mu}{x^{2-\alpha}}v = -\frac{p(x)}{p(1)}K(t), & (x,t)\in Q
		\\ 
		v(0,t)=v(1,t)=0, & t\in (0,T)
		\\ 
		v(x,0)=u_0(x), & x\in (0,1)
	\end{array}\right.
\end{align*} 
Now, for a fixed $\varepsilon>0$ we have
\begin{align*}
	\int_{\varepsilon}^T&\int_0^1 -\frac{p(x)}{p(1)}K(t)\Phi_k(x)e^{\lambda_k t}\,dxdt 
	\\
	&= \int_{\varepsilon}^T\int_0^1 \left(v_t-(x^\alpha v_x)_x-\frac{\mu}{x^{2-\alpha}}v\right)\Phi_k(x)e^{\lambda_k t}\,dxdt
	\\
	&= \int_0^1 v\Phi_ke^{\lambda_k t}\,\Big|_{\varepsilon}^T\,dx + \int_{\varepsilon}^T\int_0^1 v\left(-(x^\alpha\Phi_k')'-\frac{\mu}{x^{2-\alpha}}\Phi_k-\lambda_k\Phi_k\right)e^{\lambda_k t}\,dxdt
	\\
	&=e^{\lambda_k T}\int_0^1 v(x,T)\Phi_k(x)\,dx - e^{\lambda_k \varepsilon}\int_0^1 v(x,\varepsilon)\Phi_k(x)\,dx.
\end{align*}
Hence, taking the limit for $\varepsilon\to 0^+$ we find
\begin{align*}
	\int_Q -\frac{p(x)}{p(1)}K(t)\Phi_k(x)e^{\lambda_k t}\,dxdt = e^{\lambda_k T}\int_0^1 v(x,T)\Phi_k(x)\,dx - \rho_k^0.
\end{align*}
From this last identity and \eqref{H_def}, it immediately follows
\begin{align*}
	e^{\lambda_k T}\int_0^1 v(x,T)\Phi_k(x)\,dx &= \rho_k^0 + \left(\int_0^T K(t)e^{\lambda_k t}\,dt\right)\left(\int_0^1 -\frac{p(x)}{p(1)}\Phi_k(x)\,dx\right)
	\\
	&= \rho_k^0 -\frac{\lambda_k}{\Phi_k'(1)}\rho_k^0\int_0^1 -\frac{p(x)}{p(1)}\Phi_k(x)\,dx.
\end{align*}
Moreover, 
\begin{align*}
	\int_0^1 & -\frac{p(x)}{p(1)}\Phi_k(x)\,dx 
	\\
	&= \frac{1}{\lambda_k}\int_0^1 -\frac{p(x)}{p(1)}\lambda_k\Phi_k(x)\,dx = \frac{1}{\lambda_k}\int_0^1\frac{p(x)}{p(1)}\left((x^\alpha\Phi_k'(x))'+\frac{\mu}{x^{2-\alpha}}\Phi_k(x)\right)\,dx
	\\
	&= \frac{1}{\lambda_k}\frac{p(x)}{p(1)}x^{\alpha}\Phi_k'(x)\,\bigg|_0^1 - \frac{1}{\lambda_k}\int_0^1 \frac{p'(x)}{p(1)}x^{\alpha}\Phi_k'(x)\,dx + \frac{1}{\lambda_k}\int_0^1 \frac{p(x)}{p(1)}\frac{\mu}{x^{2-\alpha}}\Phi_k(x)\,dx 
	\\
	&=\frac{\Phi_k'(1)}{\lambda_k} - \frac{1}{\lambda_k}\frac{p'(x)}{p(1)}x^{\alpha}\Phi_k(x)\,\bigg|_0^1 + \frac{1}{\lambda_k}\int_0^1 \left[\left(x^{\alpha}\frac{p'(x)}{p(1)}\right)'+ \mu x^{\alpha-2}\frac{p(x)}{p(1)}\right]\Phi_k(x)\,dx 
	\\
	&=\frac{\Phi_k'(1)}{\lambda_k} + \frac{1}{\lambda_k p(1)}\int_0^1 \Big[\left(x^{\alpha}p'(x)\right)'+ \mu x^{\alpha-2}p(x)\Big]\Phi_k(x)\,dx = \frac{\Phi_k'(1)}{\lambda_k},
\end{align*}
since from the definition of $p(x)$ it is straightforward to check that
\begin{align*}
	\left(x^{\alpha}p'(x)\right)'+ \mu x^{\alpha-2}p(x)=0. 
\end{align*} 
Hence, we get
\begin{align*}
	e^{\lambda_k T}\int_0^1 v(x,T)\Phi_k(x)\,dx = 0,
\end{align*}
which of course implies $v(x,T) = 0$ and, since $H(T)=0$, we can finally conclude that
\begin{align*}
	u(x,T)=v(x,T)+\frac{p(x)}{p(1)}H(T)=0.
\end{align*}

At this stage, in order to prove point (i) of Theorem \ref{control_thm1}, it remains to prove the existence of a suitable biorthogonal family and to show that $K$ belongs to $L^2(0,T)$. This will be done in the next subsection together with the obtention of the upper bound of the cost of controllability.

\subsection{Existence of the control, $H^1$ regularity  and upper bound of the cost of controllability}

\subsubsection{Existence of a suitable biorthogonal family}
We will use the following result.
\begin{Theorem}(see \cite[Theorem 2.4]{cost-weak})\label{thm-biortho1-gen}
Assume that for all $k\geq 0$, $\lambda_k \geq 0, $ and that there is some $\gamma _{\text{min}}>0$ such that
\begin{equation*}
	\forall k \geq 0, \quad \sqrt{\lambda _{k+1}} - \sqrt{\lambda _{k}}  \geq \gamma _{\text{min}} .
\end{equation*}
Then there exists a family $(\sigma _{m} )_{m\geq 0}$ which is biorthogonal to the family $(e^{\lambda _{k}t})_{k\geq 0}$ in $L^2(0,T)$. Moreover, there exists some universal constant $\C_u$ independent of $T$, $\gamma _{\text{min}}$ and $m$ such that, for all $m\geq 0$, we have
\begin{equation}\label{*famillebi_qm-norme-gen-bis}
	\Vert \sigma _{m}  \Vert _{L^2(0,T)} ^2  \leq \C_u e^{-2\lambda _{m} T} e^{\C_u \frac{\sqrt{\lambda _{m}}}{ \gamma _{min}} } e^{\frac{\C_u}{\gamma _{min} ^2T}} B^\star(T,\gamma _{min}),
\end{equation}
with
\begin{equation}\label{eq(Betoile}
	B^\star(T,\gamma _{min}) =\frac{\C_u}{T}\text{max} \, \left\{ T \gamma _{min} ^2, \frac{1}{T \gamma _{min} ^2}\right\} .
\end{equation}
\end{Theorem}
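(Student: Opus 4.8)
The plan is to recast the construction as a complex-analytic interpolation problem solved through the Fourier--Laplace transform, in the spirit of Seidman--Avdonin--Ivanov and Tenenbaum--Tucsnak. First I would reverse time: the substitution $\sigma_m(t) = e^{-\lambda_m T}\, q_m(T-t)$ turns the biorthogonality relation \eqref{biortho+1} into the requirement that $q_m \in L^2(0,T)$ satisfy $\int_0^T q_m(t)\, e^{-\lambda_n t}\,dt = \delta_{mn}$, and it already accounts for the prefactor $e^{-2\lambda_m T}$ in \eqref{*famillebi_qm-norme-gen-bis}; it thus remains to bound $\|q_m\|_{L^2(0,T)}^2$ by the other factors. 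Centering the interval at the origin and invoking the Paley--Wiener theorem, every admissible $q_m$ corresponds to an entire function $F_m(z) = \int q_m(t)\, e^{-zt}\,dt$ of exponential type at most $T/2$ that is square-integrable on vertical lines, the interpolation condition reading $F_m(\lambda_n) = \delta_{mn}$; by Plancherel, $\|q_m\|_{L^2}$ is then controlled by the $L^2$ norm of $F_m$ along the imaginary axis.

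To build $F_m$ I would start from the Weierstrass canonical product $P(z) = \prod_{k\neq m}(1 - z/\lambda_k)$, whose zeros are exactly the $\lambda_k$ with $k\neq m$. The gap hypothesis $\sqrt{\lambda_{k+1}} - \sqrt{\lambda_k}\geq\gamma_{\min}$ forces $\sqrt{\lambda_k}\geq k\,\gamma_{\min}$, so $\sum_k 1/\lambda_k<\infty$, the product converges, and $P$ is entire of order $1/2$ (hence of exponential type zero). The same separation yields the classical Fattorini--Russell-type lower bound on $|P'(\lambda_m)|$, which translates into the estimate $e^{\C_u\sqrt{\lambda_m}/\gamma_{\min}}$ for the value at $\lambda_m$ of the normalized factor $P(z)/\big(P'(\lambda_m)(z-\lambda_m)\big)$; this is precisely the origin of the term $e^{\C_u \sqrt{\lambda_m}/\gamma_{\min}}$ in \eqref{*famillebi_qm-norme-gen-bis}.

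The normalized product interpolates $\delta_{mn}$ at the nodes but fails to be square-integrable along the real axis, so the decisive step is to multiply it by a \emph{multiplier} $M_T$: an entire function that exploits the full exponential-type budget $T/2$ to decay as rapidly as possible on the real axis while remaining equal to $1$ at every node $\lambda_n$, typically assembled from sine-type products whose spacing is governed by $\gamma_{\min}$ and $T$. Setting $F_m(z) = \frac{P(z)}{P'(\lambda_m)(z-\lambda_m)}\, M_T(z)$, verifying $F_m(\lambda_n)=\delta_{mn}$, and estimating $\|F_m\|_{L^2(i\mathbb R)}$ by Plancherel is where the whole $T$- and $\gamma_{\min}$-dependence is gathered, producing the factors $e^{\C_u/(\gamma_{\min}^2 T)}$ and $B^\star(T,\gamma_{\min})$. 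I expect the construction and sharp, uniform estimation of this multiplier --- simultaneously controlling its type, its values at the nodes, and its real-axis decay with constants independent of $T$, $m$, and $\gamma_{\min}$ --- to be the main obstacle; once $M_T$ is in hand, the interpolation identity and the final bound follow by routine estimates. The artificial node $\lambda_0=0$ merely shifts indices and does not affect any of these bounds.
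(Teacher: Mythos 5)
The paper does not actually prove this theorem: it is imported verbatim from \cite[Theorem 2.4]{cost-weak} (the remark following it merely reconciles $B$ with $B^\star$), and the paper notes that the proof there follows Seidman--Avdonin--Ivanov \cite{Seid-Avdon} and Tenenbaum--Tucsnak \cite{Tucsnak}. Your sketch reproduces the architecture of that proof correctly: the time reversal $\sigma_m(t)=e^{-\lambda_m T}q_m(T-t)$ does produce the factor $e^{-2\lambda_m T}$; the Paley--Wiener reformulation as an interpolation problem for entire functions of exponential type $\leq T/2$ is the right frame; and the Weierstrass product with the Fattorini--Russell separation estimate on $|P'(\lambda_m)|$ is indeed the source of the factor $e^{\C_u\sqrt{\lambda_m}/\gamma_{min}}$.

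The genuine gap is the multiplier, and it is not a peripheral one: it is precisely the step that generates the factors $e^{\C_u/(\gamma_{min}^2 T)}$ and $B^\star(T,\gamma_{min})$ in \eqref{*famillebi_qm-norme-gen-bis}--\eqref{eq(Betoile}, i.e.\ the uniform $T$- and $\gamma_{min}$-dependence for which this theorem is invoked in both cost estimates of the paper. You defer its construction entirely, and the specification you give for it is self-contradictory. In your (Laplace-transform) convention the nodes $\lambda_n$ lie on the positive real axis while Plancherel is applied on the imaginary axis; an entire function cannot ``decay as rapidly as possible on the real axis while remaining equal to $1$ at every node $\lambda_n$,'' because the nodes are real and unbounded, so such decay would force $M_T(\lambda_n)\to 0$. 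What is actually needed is decay along the \emph{imaginary} axis, strong enough to compensate the growth $|P(iy)|\approx e^{c\sqrt{|y|}/\gamma_{min}}$ of the order-$1/2$ product there, together with a lower bound on $|M_T(\lambda_m)|$ at the \emph{single} point $\lambda_m$: the values at the other nodes are irrelevant since $P$ already vanishes there, and one simply normalizes $F_m(z)=\frac{P(z)\,M_T(z)}{P'(\lambda_m)(z-\lambda_m)\,M_T(\lambda_m)}$. Constructing such an $M_T$ of type $\leq T/2$, with decay $e^{-c\sqrt{|y|}/\gamma_{min}}$ and $|M_T(\lambda_m)|$ bounded below uniformly in $m$, $T$ and $\gamma_{min}$, is the entire technical content of the Seidman--Avdonin--Ivanov / Tenenbaum--Tucsnak argument; without it, none of the $T$-dependence in the stated bound is established. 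A smaller defect: when $\lambda_0=0$ --- exactly the case used in the paper, where $\lambda_0$ is set to zero artificially --- your product $\prod_{k\neq m}(1-z/\lambda_k)$ is undefined for $m\geq 1$; the zero at the origin must be inserted as a separate factor $z$.
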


\begin{Remark} {\rm \cite[Theorem 2.4]{cost-weak} is formulated in the following way:
\begin{equation*}
	\Vert \sigma _{m}  \Vert _{L^2(0,T)} ^2  \leq \C_u e^{-2\lambda _{m} T} e^{\C_u \frac{\sqrt{\lambda _{m}}}{ \gamma _{min}} }  B(T,\gamma _{min}),
\end{equation*}
with
\begin{equation*}
	B(T,\gamma _{min}) = \begin{cases} \Bigl( \frac{1}{T} + \frac{1}{T^2 \gamma _{min} ^2} \Bigr) \, e^{\frac{\C_u}{\gamma _{min} ^2T}}  \quad & \text{ if } T \leq \gamma _{min}^{-2}, \\ \C_u \gamma _{min} ^2 \quad & \text{ if } T \geq \gamma_{min}^{-2}, \end{cases}
\end{equation*}
and this is clearly equivalent to \eqref{*famillebi_qm-norme-gen-bis}-\eqref{eq(Betoile}.
}
\end{Remark}

Using \eqref{eigen-gap1_unif} and \eqref{eigen-gap2_unif}, the eigenvalues of the problem satisfy for all $\mu \leq \mu(\alpha)$
\begin{align*}
	\forall k\geq 1, \quad \sqrt{\lambda _{k+1}} - \sqrt{\lambda _{k}} \geq \text{min} \, \left\{ \frac{7\pi}{16}, \frac{\pi}{2} \right\} =
\frac{7\pi}{16}.
\end{align*}
As before, define artificially $\lambda _{0} :=0.$ Then, for all $\mu \leq  \mu(\alpha)$, 
\begin{align*}
	\sqrt{\lambda _{1}} - \sqrt{\lambda _{0}} = \frac{2-\alpha}{2} j_{\nu(\alpha,\mu),1} \geq \frac{2-\alpha}{2}\cdot \frac{3\pi}{4} =\frac{3\pi}{8} (2-\alpha) \geq \frac{3\pi}{8},
\end{align*}
using the fact that, thanks to \eqref{bessel_zero_bound}, one can easily prove that $j_{\nu,1} \geq 3\pi/4$ for all $\nu \geq 0$ and next using the fact that $2-\alpha \geq 1$. Therefore we can apply Theorem \ref{thm-biortho1-gen} to the family $(e^{\lambda_{k}t})_{k\geq 0}$ provided that we choose 
\begin{align*}
	\gamma _{min} = \text{min} \left\{ \frac{7\pi}{16}, \frac{3\pi}{8} \right\} = \frac{3\pi}{8} .
\end{align*}
We obtain that there exists a family $(\sigma _{m} )_{m\geq 0}$ biorthogonal to $(e^{\lambda _{k}t})_{k\geq 0}$ in $L^2(0,T)$, and such that
\begin{equation}\label{majo-de-sigma}
	\Vert \sigma _{m}  \Vert _{L^2(0,T)} ^2  \leq \C e^{-2\lambda _{m} T} e^{\C \frac{\sqrt{\lambda _{m}}}{\gamma_{min}}}
\widetilde B(T) \leq \C e^{-2\lambda _{m} T} e^{\C \sqrt{\lambda _{m}} }\widetilde B(T) ,
\end{equation}
with  
\begin{align*}
	\widetilde B(T)= \text{max} \left\{1, \frac{1}{T^2} \right\} e^{\frac{\C}{T}}\; \text{ for all } T>0.
\end{align*}
The form of $\widetilde B(T)$ easily follows from the definition of $B(T, \gamma_{min})^\star$. 

\subsubsection{The control $f$ belongs to $H^1(0,T)$}
We have to check that the control $H$ defined as in \eqref{H_def} belongs to $H^1(0,T)$. To this end, we are going to prove, instead, that the function $K$ belongs to $L^2(0,T)$. From \eqref{H_def} we have 
\begin{align*}
	\norm{K}{L^2(0,T)} &= \norm{\sum_{k\geq 1}\frac{\lambda_k}{\Phi_k'(1)}\rho_k^0\sigma_k}{L^2(0,T)} \leq \sum_{k\geq 1} |\,\rho_k^0|\left|\frac{\lambda_k}{\Phi_k'(1)}\right|\norm{\sigma_k}{L^2(0,T)}.
\end{align*}
Let us compute the value of $\vert \Phi_k'(1) \vert$: we recall that 
\begin{align*}
	\Phi_k(x)=\C_k\, x^{\frac{1-\alpha}{2}}J_{\nu(\alpha,\mu)}\left(\jnk x^{\frac{2-\alpha}{2}}\right), \;\;\;\textrm{ with }\;\;\; 
\C_k=\frac{\sqrt{2-\alpha}}{|J_{\nu(\alpha,\mu)}'(\jnk)|}.
\end{align*}
Thus, a direct computation gives 
\begin{align*}
	\Phi_k'(x) = \frac{1-\alpha}{2}\C_k x^{-\frac{1+\alpha}{2}}J_{\nu(\alpha,\mu)}\left(\jnk x^{\frac{2-\alpha}{2}}\right) 
	+ \frac{2-\alpha}{2}\C_k\jnk x^{\frac{1-2\alpha}{2}}J_{\nu(\alpha,\mu)}'\left(\jnk x^{\frac{2-\alpha}{2}}\right).
\end{align*}
Therefore	
\begin{align}\label{eigen_deriv}
	|\Phi_k'(1)| &= \left|\frac{1-\alpha}{2}\C_k J_{\nu(\alpha,\mu)}(\jnk) + \frac{2-\alpha}{2}\C_k\jnk J_{\nu(\alpha,\mu)}'(\jnk)\right|
=\frac{(2-\alpha)^{\frac 32}}{2} \jnk.\notag
\\
\end{align}
Consequently, employing \eqref{eigen_deriv} and the explicit expression of the eigenvalues $\lambda_k$ we obtain
\begin{align*}
	\left|\frac{\lambda_k}{\Phi_k'(1)}\right| = \frac{\left(\frac{2-\alpha}{2}\right)^2\jnk^2}{\frac{(2-\alpha)^{\frac 32}}{2}\jnk} = \frac{\sqrt{2-\alpha}}{2}\jnk\leq \sqrt{2}\jnk. 
\end{align*}
Therefore,  we get
\begin{align*}
	\norm{K}{L^2(0,T)} \leq \sqrt{2} \sum_{k\geq 1} |\,\rho_k^0| \jnk \norm{\sigma_k}{L^2(0,T)} \leq \sqrt{2}\left( \sum_{k\geq 1} |\,\rho_k^0|^2 \right)^{1/2} \left( \sum_{k\geq 1} \jnk ^2 \norm{\sigma_k}{L^2(0,T)}^2 \right)^{1/2}.
\end{align*}

Using the explicit expression of $\lambda_k$, we get $\jnk ^2 = 4 \lambda_k/ (2-\alpha)^2 \leq 4 \lambda_k$ since $\alpha <1$.
Hence, using also the estimate \eqref{majo-de-sigma}, we deduce that 
\begin{align*}
	\norm{K}{L^2(0,T)} \leq \C\norm{u_0}{L^2(0,1)} \left( \sum_{k\geq 1} \lambda_k  e^{-2\lambda _k T} e^{\C \sqrt{\lambda _{k}} }
\widetilde B(T) \right)^{1/2},
\end{align*}
which is finite. This implies that $K \in L^2(0,T)$. Therefore we have $H\in H^1(0,T)$ with of course $H(0)=0$. And the fact that $H(T)=0$ follows from \eqref{sigma_int} with $m=0$. 

\subsubsection{Upper bound of the cost of controllability }\label{upp_bound_one_sec}
As shown before, the function $H$ defined in \eqref{H_def} is an admissible control. It follows that 
\begin{align*}
	\C _{ctr-bd} \leq \frac{\Vert H \Vert _{H^1 (0,T)}}{\Vert u_0 \Vert _{L^2 (0,1)} } \leq \C \frac{\Vert K \Vert _{L^2 (0,T)}}{\Vert u_0 \Vert _{L^2 (0,1)} }.
\end{align*}
Hence
\begin{equation*}
	\C _{ctr-bd} \leq \C \sqrt{\widetilde B(T) } \left( \sum _{k=1} ^\infty  \lambda _{k}  e^{-2\lambda _{k} T} e^{\C \sqrt{\lambda _{k}} } \right) ^{1/2}.
\end{equation*}
Then let us write 
\begin{align*}
	\C \sqrt{\lambda _{k}} \leq \lambda _{k}T + \frac{\C'}{T}.
\end{align*}
One deduces that 
\begin{align*} 
	\C _{ctr-bd} &\leq \C \sqrt{\widetilde B(T)} \left( \sum _{k=1} ^\infty \lambda_k e^{-\lambda_k T} e^{\frac{\C'}{T}}\right)^{1/2} 
	\\
	&\leq \C \sqrt{\widetilde B(T) } e^{\frac{\C}{T}} \left(\sum _{k=1} ^\infty \frac{(2-\alpha)^2}{4} (j_{\nu(\alpha, \mu),k})^2   e^{-\frac{(2-\alpha)^2}{4} j _{\nu(\alpha, \mu),k}^2 T} \right)^{1/2}
	\\
	&\leq C e^{\frac{\C}{T}}\left( \sum _{k=1} ^\infty (j_{\nu(\alpha, \mu),k}  )^2 e^{- \frac{(2-\alpha)^2}{4} j _{\nu(\alpha, \mu),k}^2 T}\right)^{1/2}.
\end{align*}
Next we use the following Lemma proved in \cite{CMV-the-cost-strong}:
\begin{Lemma} 
\label{lem-maj-serie}
There is some constant $\C>0$, independent of $\nu$ and of $Y$, such that :
\begin{equation*}
	\forall \nu \geq 0, \quad \forall Y>0, \qquad \sum _{k=1} ^\infty j_{\nu,k}^2  e^{-j _{\nu,k}^2 Y} \leq \C \frac{1+ \nu ^2 }{Y^{3/2}} e^{-(1+\nu ^2) \frac{Y}{\C}}.
\end{equation*}
\end{Lemma}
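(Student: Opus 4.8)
The plan is to reduce the series, whose summand $x^2 e^{-x^2 Y}$ is non-monotone with a moving peak at $x=\sqrt{1/Y}$, to a series of a \emph{decreasing} function, and then to compare that series with a Gaussian integral by means of a gap bound that is uniform in $\nu$. First I would collect two uniform estimates on the zeros. From \eqref{bessel_zero_bound}, treating the ranges $\nu\in[0,\tfrac12]$ and $\nu\in[\tfrac12,\infty)$ separately, one gets $j_{\nu,k}\geq \tfrac{\pi}{4}(k+\nu)$ for every $\nu\geq 0$ and $k\geq 1$; in particular $j_{\nu,1}^2\geq \tfrac{\pi^2}{16}(1+\nu)^2\geq \tfrac{\pi^2}{16}(1+\nu^2)$. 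I also need a lower bound on the gap that does \emph{not} degrade as $\nu\to\infty$: for $\nu\in[0,\tfrac12]$, \eqref{bessel_zero_bound} gives $j_{\nu,k+1}-j_{\nu,k}\geq \tfrac{7\pi}{8}$, while for $\nu\geq\tfrac12$ the monotonicity in Lemma \ref{eigen_gap_lemma} shows the gaps decrease to $\pi$, hence stay above $\pi$. Thus $j_{\nu,k+1}-j_{\nu,k}\geq d:=\tfrac{7\pi}{8}$ for all $\nu\geq 0$ and $k\geq1$.

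The elementary device is the pointwise inequality $t^2 e^{-t^2 Y}\leq \tfrac{2}{eY}\,e^{-t^2 Y/2}$, valid for all $t\geq 0$ and $Y>0$, which follows from $\sup_{s\geq 0} s\,e^{-s}=e^{-1}$ applied to $s=t^2Y/2$. Applying it termwise turns the problem into estimating $\sum_k e^{-j_{\nu,k}^2 Y/2}$, whose summand is now decreasing in $j_{\nu,k}$, so that no peak has to be tracked. To extract the decay in $\nu$, I split $e^{-j_{\nu,k}^2 Y/2}\leq e^{-j_{\nu,1}^2 Y/4}\,e^{-j_{\nu,k}^2 Y/4}$ (using $j_{\nu,k}\geq j_{\nu,1}$) and bound the remaining decreasing series by the integral comparison permitted by the uniform gap: since for $k\geq 2$ one has $e^{-j_{\nu,k}^2Y/4}\leq \tfrac1d\int_{j_{\nu,k-1}}^{j_{\nu,k}}e^{-x^2Y/4}\,dx$ over disjoint intervals, $\sum_k e^{-j_{\nu,k}^2 Y/4}\leq 1+\tfrac1d\int_0^\infty e^{-x^2 Y/4}\,dx\leq \C(1+Y^{-1/2})$. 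Collecting these steps and using $j_{\nu,1}^2\geq\tfrac{\pi^2}{16}(1+\nu^2)$ yields
\begin{equation*}
	\sum_{k\geq1} j_{\nu,k}^2 e^{-j_{\nu,k}^2 Y}\leq \frac{2}{eY}\,e^{-j_{\nu,1}^2 Y/4}\,\C(1+Y^{-1/2})\leq \C\,e^{-\frac{\pi^2}{64}(1+\nu^2)Y}\,\big(Y^{-1}+Y^{-3/2}\big).
\end{equation*}

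It then remains to massage this into the stated form. Writing $A:=1+\nu^2\geq1$ and choosing the final $\C$ so that $\C^{-1}<\tfrac{\pi^2}{64}$, the factor $e^{-\frac{\pi^2}{64}AY}$ dominates $e^{-AY/\C}$ with room to spare: the $Y^{-3/2}$ term is immediately of the claimed shape since $A\geq1$, and for the $Y^{-1}$ term one writes $Y^{-1}e^{-\frac{\pi^2}{64}AY}=A\,Y^{-3/2}e^{-AY/\C}\cdot A^{-1}Y^{1/2}e^{-(\frac{\pi^2}{64}-\C^{-1})AY}$ and notes that $A^{-1}Y^{1/2}e^{-(\frac{\pi^2}{64}-\C^{-1})AY}\leq \sup_{Y>0}Y^{1/2}e^{-(\frac{\pi^2}{64}-\C^{-1})Y}$ is a finite constant uniform in $A\geq1$. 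This produces the bound $\C\,\tfrac{1+\nu^2}{Y^{3/2}}\,e^{-(1+\nu^2)Y/\C}$, as desired, with $\C$ independent of $\nu$ and $Y$.

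I expect the main obstacle to be securing the $\nu$-uniform gap bound: the naive estimate drawn directly from \eqref{bessel_zero_bound} degenerates for large $\nu$, so the decreasing-gap property of Lemma \ref{eigen_gap_lemma} is essential for the range $\nu\geq\tfrac12$. The secondary technical point is the bookkeeping of the last paragraph, which converts the natural bound $e^{-c(1+\nu^2)Y}(Y^{-1}+Y^{-3/2})$ into the precise stated form uniformly in both parameters, the delicate part being the treatment of the $Y^{-1}$ term for large $Y$, where the slack between $\tfrac{\pi^2}{64}$ and $\C^{-1}$ and the fact that $1+\nu^2\geq1$ are exactly what is needed.
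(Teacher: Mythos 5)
Your proof is correct, but it cannot be compared line-by-line with the paper's own argument for a simple reason: the paper does not prove Lemma \ref{lem-maj-serie} at all, it only quotes it from \cite{CMV-the-cost-strong}. What you have produced is therefore a self-contained substitute for that external citation, and it holds up under scrutiny. Your two preparatory estimates are exactly what \eqref{bessel_zero_bound} and Lemma \ref{eigen_gap_lemma} deliver: the lower bound $j_{\nu,k}\geq \tfrac{\pi}{4}(k+\nu)$ follows in both ranges of $\nu$ (for $\nu\in[0,\tfrac12]$ it reduces to $3k+\nu\geq 1$, for $\nu\geq\tfrac12$ to $3k\geq \tfrac12$), and the $\nu$-uniform gap $j_{\nu,k+1}-j_{\nu,k}\geq \tfrac{7\pi}{8}$ is right — for $\nu\in[0,\tfrac12]$ by subtracting the upper bound at index $k$ from the lower bound at index $k+1$, and for $\nu\geq\tfrac12$ because the gap sequence is nonincreasing with limit $\pi$, hence at least $\pi$. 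You correctly identified that the naive gap bound from \eqref{bessel_zero_bound} alone degenerates as $\nu\to\infty$ and that the monotonicity statement is what rescues the large-$\nu$ range; this is the same structural point the paper itself exploits in Lemma \ref{gap_lemma}. The three analytic steps — the pointwise bound $t^2e^{-t^2Y}\leq \tfrac{2}{eY}e^{-t^2Y/2}$ from $\sup_{s\geq0}se^{-s}=e^{-1}$, the factor split $e^{-j_{\nu,k}^2Y/2}\leq e^{-j_{\nu,1}^2Y/4}e^{-j_{\nu,k}^2Y/4}$, and the gap-based comparison of the decreasing tail with $\tfrac1d\int_0^\infty e^{-x^2Y/4}\,dx=\tfrac1d\sqrt{\pi/Y}$ — are all valid, and the final bookkeeping converting $e^{-\frac{\pi^2}{64}(1+\nu^2)Y}\bigl(Y^{-1}+Y^{-3/2}\bigr)$ into the stated form is handled correctly: the only genuinely delicate case is the $Y^{-1}$ term for large $Y$, and your use of the slack $\tfrac{\pi^2}{64}-\C^{-1}>0$ together with $1+\nu^2\geq 1$ to absorb $Y^{1/2}$ uniformly in $\nu$ is exactly what is needed (one finishes, as you implicitly do, by replacing the two constants by their maximum, which only weakens the exponential). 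The net effect of your route is that the paper's dependence on \cite{CMV-the-cost-strong} for this particular lemma could be removed, since every ingredient you use is already stated in the present paper.
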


Applying Lemma \ref{lem-maj-serie} with $Y=\frac{(2-\alpha)^2}{4} T$ , it follows that
\begin{align*}
	\C_{ctr-bd} &\leq \C  e^{\frac{\C}{T}} \frac{1+ \nu (\alpha,\mu)}{T^{3/4}} e^{-\frac{1+\nu (\alpha,\mu)^2}{2\C} \frac{(2-\alpha)^2}{4} T} \leq \C e^{\frac{\C}{T}} [1+ \nu (\alpha,\mu)] e^{- \C (1+\nu (\alpha,\mu)^2)  T} 
	\\
	&\leq \C e^{\frac{\C}{T}} \left[1+ \frac{2}{2-\alpha} \sqrt{\mu(\alpha)-\mu} \right] e^{- \C \left( 1+\frac{4}{(2-\alpha)^2} (\mu(\alpha)-\mu) \right)  T} 
	\\
	&\leq \C e^{\frac{\C}{T}} \left[1+  \sqrt{\mu(\alpha)-\mu}\right] e^{- \C \left( 1+\sqrt{\mu(\alpha)-\mu} \right)^2 T}  .
\end{align*}

\subsection{Lower bound of the cost of controllability}
Let us fix $m\geq 1$ and let us choose $u_0 = \Phi _{m}$. Consider $H_m$ any control that drives the solution of \eqref{pb-x-1} to zero in time $T$. Then \eqref{moment_cond} reads as
\begin{align*}
	\forall k \geq 1, \quad \Phi_k'(1) \int _0 ^T  H_m(t)  e^{\lambda _{k} t} \, dt = \rho^0_k=\delta _{mk}.
\end{align*}
It follows that
\begin{align*}
	\forall k \geq 1, \quad \int _0 ^T  \Big( \Phi_m'(1)  H_m(t) \Big)  e^{\lambda _{k} t} \, dt =\delta _{mk}.
\end{align*}

In other words, the sequence $(\Phi_m'(1)  H_m)_{m\geq 1}$ is biorthogonal to the set $(e^{\lambda _{k} t})_{k\geq 1}$.  At this stage,  we will distinguish the two following cases:
\begin{align*}
	\nu(\alpha, \mu) \in \left[0, \frac{1}{2}\right] \quad \text{ that is, }\mu \in \left[ \frac{\alpha}{16} (3\alpha-4), \mu(\alpha)\right]
\end{align*}
and 
\begin{align*}
	\nu(\alpha, \mu) \in \left[\frac{1}{2}, +\infty\right) \quad \text{ that is, }\mu  \in \left(-\infty,  \frac{\alpha}{16} (3\alpha-4)\right].
\end{align*}

\subsubsection{Lower bound in the case  $\nu(\alpha, \mu) \in \left[0, 1/2\right] $} In this first case, we are going to use the following generalization of Guichal \cite{Guichal}, proved in \cite{cost-weak}:
\begin{Theorem} (\cite[Theorem 2.5]{cost-weak})\label{thm-guichal-gen}
Assume that $\lambda_k\geq 0$ for all $k \geq 1$ and that there is some $0 < \gamma _{\text{min}} \leq \gamma _{\text{max}}$ such that
\begin{equation}\label{gap-max}
	\forall k \geq 1, \quad \gamma _{\text{min}} \leq \sqrt{\lambda _{k+1}} - \sqrt{\lambda _{k}}  \leq \gamma _{\text{max}} .
\end{equation}
Then there exists $c_u>0$ independent of $T$ and $m$ such that any family $(\sigma _{m} )_{m\geq 1}$ which is biorthogonal to the family $(e^{\lambda _{k}t})_{k\geq 1}$ in $L^2(0,T)$ satisfies
\begin{equation*}
	\Vert \sigma _{m}  \Vert _{L^2(0,T)} ^2 \geq e^{-2\lambda _{ m} T} e^{\frac{1}{2\gamma _{\text{max}} ^2 T}} b(T,\gamma_{max},m),
\end{equation*}
with
\begin{equation}\label{b-T-m-max}
	b(T,\gamma_{max},m) = \frac{c_u ^2}{\C(m, \gamma _{\text{max}}, \lambda _1)^2 \, T } \left(\frac{1}{2\gamma _{\text{max}}^2 T}\right)^{2m} \frac{1}{(4\gamma _{max} ^2T+1)^2}
\end{equation}
and
\begin{equation*}
	\C(m, \gamma _{max}, \lambda _1) = m! \, 2 ^{m+ \left[\frac{2 \sqrt{\lambda _1}}{\gamma _{max}}\right] +1} \, \left(m+\left [\frac{2 \sqrt{\lambda _1}}{\gamma _{max}}\right] +1\right).
\end{equation*}
\end{Theorem}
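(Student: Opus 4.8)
The statement is a lower bound valid for an \emph{arbitrary} biorthogonal family, so the natural strategy is duality: I would produce, for each fixed $m$, a single test function against which $\sigma_m$ is forced to pair to a quantity bounded below, while the $L^2(0,T)$ norm of that test function is bounded above. Concretely, for any $g$ in the closed span $E=\overline{\mathrm{span}}\{e^{\lambda_k t}\}_{k\ge 1}$, writing $g=\sum_k a_k e^{\lambda_k t}$, the biorthogonality \eqref{biortho+1} gives $\int_0^T \sigma_m g\,dt = a_m$, whence Cauchy--Schwarz yields $\|\sigma_m\|_{L^2(0,T)}\ge |a_m|/\|g\|_{L^2(0,T)}$. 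Equivalently, and more invariantly, $\|\sigma_m\|_{L^2(0,T)}\ge 1/d_m$ with $d_m=\mathrm{dist}_{L^2(0,T)}\big(e^{\lambda_m t},\,\overline{\mathrm{span}}\{e^{\lambda_k t}:k\ne m\}\big)$, because any biorthogonal family has norm at least that of the minimal, in-span one. Thus the theorem reduces to exhibiting one good competitor $g$ --- equivalently, a good $L^2(0,T)$ approximation of $e^{\lambda_m t}$ by the remaining exponentials --- realising the target size.

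The shape of the claimed bound dictates the construction. The factor $e^{-2\lambda_m T}$ is the square of the natural scale $\|e^{\lambda_m(\cdot-T)}\|_{L^2(0,T)}^{-1}$, so I would seek $g$ of the form $e^{\lambda_m t}$ times a real-analytic window; the pair of factors $e^{1/(2\gamma_{\max}^2 T)}$ and $(2\gamma_{\max}^2 T)^{-2m}/(m!)^2$ is the signature of a Gaussian (heat-kernel) window of width $\sim \gamma_{\max}^{-1}T^{-1/2}$, whose coefficients in the variable $\sqrt{\lambda}$ are Gaussians centred at $\sqrt{\lambda_m}$. It is convenient to pass to the Laplace side: setting $F(\lambda)=\int_0^T\sigma_m(t)e^{\lambda t}\,dt$, an entire function with $F(\lambda_k)=\delta_{mk}$, Plancherel rewrites $\|\sigma_m\|_{L^2(0,T)}^2$ as $\tfrac{1}{2\pi}\int_{\mathbb R}|F(i\omega)|^2\,d\omega$ and turns the problem into a Beurling-type extremal problem for a function of exponential type $T$ with prescribed interpolation data. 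I would build the competitor by comparison with the exactly solvable model in which the $\sqrt{\lambda_k}$ are equally spaced with step $\gamma_{\max}$ --- the densest configuration allowed by the upper gap in \eqref{gap-max} --- where Guichal's explicit computation can be carried out and produces precisely the factorial and exponential factors. The $m!$ and the power $(2\gamma_{\max}^2 T)^{-2m}$ arise from neutralising the $m$ nearest exponentials below $\lambda_m$, each neighbour contributing a gap factor through the product (Blaschke) structure of $F$.

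The two-sided gap hypothesis \eqref{gap-max} enters in two distinct roles. The lower gap $\gamma_{\min}>0$ is only needed to guarantee that $\{e^{\lambda_k t}\}$ is minimal, so that biorthogonal families exist and the duality is non-degenerate; all quantitative content uses the upper gap $\gamma_{\max}$, which bounds the density of the $\sqrt{\lambda_k}$ from below and is exactly what lets me estimate the relevant sums and products over $k\ne m$ by their model counterparts. The auxiliary factor $\C(m,\gamma_{\max},\lambda_1)$, with its $m!$ and $2^{[2\sqrt{\lambda_1}/\gamma_{\max}]+1}$ terms, accounts for the spectrum starting at $\lambda_1$ rather than at $0$: I would treat the finitely many eigenvalues below the threshold $\sim \sqrt{\lambda_1}/\gamma_{\max}$ separately, which is what produces the integer part and the power of two. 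Dividing the lower bound for the pairing of $\sigma_m$ with the window by the upper bound for the window norm, and squaring, then gives the displayed inequality.

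The conceptual skeleton --- duality, Gaussian window, comparison with the equally spaced model --- is routine; the genuine difficulty is the explicit, constant-tracking construction of the competitor that realises the three sharp factors $e^{1/(2\gamma_{\max}^2 T)}$, $(2\gamma_{\max}^2 T)^{-2m}$ and $1/(m!)^2$ \emph{simultaneously} and uniformly in both $T$ and $m$. One must choose the window width, truncate the exponential series at the right index, and control the tails so that none of these factors is degraded and so that $c_u$ genuinely does not depend on $T$ or $m$. The delicate regime is $T$ small, where $1/(2\gamma_{\max}^2 T)$ is large and the series is most sensitive; verifying uniformity there, while keeping the dependence on $\lambda_1$ packaged cleanly into $\C(m,\gamma_{\max},\lambda_1)$, is where essentially all the work lies.
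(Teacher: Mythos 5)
First, a point of orientation: the paper does not prove this statement at all --- it is imported verbatim, with attribution, from \cite[Theorem 2.5]{cost-weak}, where it is obtained by extending G\"uichal's argument \cite{Guichal}; so the comparison is necessarily against that cited proof. Your opening reduction does coincide with it: for any biorthogonal family one has $1=\int_0^T\sigma_m(t)\bigl(e^{\lambda_m t}-h(t)\bigr)\,dt$ for every finite combination $h$ of the other exponentials, hence $\Vert\sigma_m\Vert_{L^2(0,T)}\geq 1/d_m$, where $d_m$ is the $L^2(0,T)$-distance from $e^{\lambda_m t}$ to the closed span of $\{e^{\lambda_k t}\}_{k\neq m}$, and the theorem is equivalent to an upper bound on $d_m$ with the stated constants. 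You are also right that only the upper gap $\gamma_{\max}$ (together with $\lambda_1$) plays a quantitative role.

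The genuine gap is that after this reduction nothing is actually proved. The theorem is a statement about explicit constants, and your text floats three different mechanisms for producing them --- a Gaussian window, a Beurling/Blaschke extremal problem on the Laplace side, and a ``comparison with the equally spaced model where G\"uichal's explicit computation can be carried out'' --- without executing any of them; you then concede that realising $e^{1/(2\gamma_{\max}^2 T)}$, $(2\gamma_{\max}^2T)^{-2m}$ and $(m!)^{-2}$ simultaneously and uniformly in $T$ and $m$ ``is where essentially all the work lies''. That \emph{is} the theorem, not a deferrable technicality. In the cited proof the competitor is not a Gaussian window: it is built from the Newton divided difference of $\lambda\mapsto e^{\lambda t}$ at the nodes $\lambda_1,\dots,\lambda_N$ (with $N\geq m$), which is simultaneously the combination $\sum_{k\leq N} e^{\lambda_k t}/\prod_{j\neq k}(\lambda_k-\lambda_j)$ and, by the mean value form, of pointwise size $T^{N-1}e^{\lambda_N T}/(N-1)!$ on $[0,T]$; isolating the $k=m$ term yields
\begin{equation*}
 d_m\;\leq\;\prod_{j\leq N,\ j\neq m}\vert\lambda_m-\lambda_j\vert\;\frac{T^{N-1}\sqrt{T}}{(N-1)!}\,e^{\lambda_N T},
\end{equation*}
after which the products are estimated using only \eqref{gap-max} and $\lambda_1$ (this is the origin of $m!$, of the power of $2$, and of the integer part $\bigl[2\sqrt{\lambda_1}/\gamma_{\max}\bigr]$ inside $\C(m,\gamma_{\max},\lambda_1)$), and one finally optimises the truncation $N\sim 1/(\gamma_{\max}^2T)$, Stirling's formula producing exactly the factor $e^{1/(2\gamma_{\max}^2T)}(2\gamma_{\max}^2T)^{-2m}$. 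None of these steps appears in your sketch, and the one structural claim you do make about the constants --- that this factor pair is ``the signature of a Gaussian window'' --- is not how they arise. A smaller but telling slip: the equally spaced configuration with step $\gamma_{\max}$ is the \emph{sparsest}, not the densest, configuration compatible with \eqref{gap-max}; it is the worst case precisely because larger gaps make $e^{\lambda_m t}$ harder to approximate by the remaining exponentials.
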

When $\nu(\alpha,\mu) \in [0, \frac{1}{2}]$, using \eqref{eigen-gap1_unif}, we see that assumption \eqref{gap-max} is satisfied with
\begin{align*}
	\gamma _{\text{min}} := \frac{7\pi}{16}, \quad \text{ and } \quad \gamma _{\text{max}} := \pi.
\end{align*}

So, using Theorem \ref{thm-guichal-gen}, we obtain that any family $(\sigma _{m} )_{m\geq 1}$ which is biorthogonal to the family $(e^{\lambda _{k}t})_{k\geq 1}$ in $L^2(0,T)$ satisfies:
\begin{equation*}
	\Vert \sigma _{m}  \Vert _{L^2(0,T)} ^2 \geq e^{-2\lambda _{ m} T} e^{\frac{1}{2 \pi ^2 T}} b(T,\gamma_{max},m),
\end{equation*}
where $b(T,\gamma_{max},m)$ is given in \eqref{b-T-m-max}. Let us now apply this inequality for $m=1$. It implies 
\begin{equation}\label{mino-sigma1}
	\Vert \sigma _{1}  \Vert _{L^2(0,T)} ^2\geq e^{-2\lambda _{ 1} T} e^{\frac{1}{2 \pi ^2 T}} b(T,\gamma_{max},1).
\end{equation}
Next, we observe that, for $\nu \in [0,\frac 12]$ and $n=1$, \eqref{bessel_zero_bound} gives
\begin{align*}
	\frac{3\pi}{4}\leq \pi \left( \frac{3}{4} + \frac{\nu}{2}\right) \leq j_{\nu,1} \leq \pi \left( 1 + \frac{1}{4}\left( \nu - \frac{1}{2} \right)\right) \leq \pi.
\end{align*}
It follows that 
\begin{align*}
	\frac{9 \pi^2}{64}\leq \left( \frac{2-\alpha}{2} \right)^2 \left(\frac{3 \pi}{4}\right)^2 \leq \lambda_{1}  \leq  \left( \frac{2-\alpha}{2} \right)^2 \pi^2
 \leq \pi^2
\end{align*}
and 
\begin{align*}
	\lambda_{1} \leq  \C(1+\nu(\alpha, \mu))^2.
\end{align*}
In particular, using $\lambda_{1} \geq 9 \pi^2/64$, we obtain that 
\begin{align*}
	b(T,\gamma_{max},1) \geq \frac{\C}{T^3 (1+T)^2}.
\end{align*}
From \eqref{mino-sigma1}, we deduce  
\begin{align*}
	\Vert \sigma _{1}  \Vert _{L^2(0,T)} ^2 \geq \frac{\C}{T^3 (1+T)^2} e^{-2\lambda _{ 1} T} e^{\frac{1}{2\pi ^2 T}}.
\end{align*}
Hence,
\begin{align*}
	\Vert  \Phi'_1(1) H_1 \Vert _{L^2(0,T)} ^2 \geq \frac{\C}{T^3 (1+T)^2} e^{-2\lambda_{1}  T} e^{\frac{1}{2\pi ^2 T}}.
\end{align*}
From \eqref{eigen_deriv}, one has 
\begin{align*}
	|\Phi'_1(1)| = \frac{(2-\alpha)^{3/2} j_{\nu(\alpha, \mu),1}}{2} \leq \sqrt{2}  j_{\nu(\alpha, \mu),1}   \leq \sqrt{2} \pi.
\end{align*}
So we obtain that
\begin{align*}
	\C_{ctr-bd} \geq \frac{\C}{T^{3/2} (1+T)  } e^{-\lambda_{1} T} e^{\frac{\C}{ T}} .
\end{align*}
Finally, using the fact that $\lambda _{ 1} \leq \C (1+\nu(\alpha, \mu))^2$, we get 
\begin{align*}
	\C_{ctr-bd} \geq \frac{\C}{T^{3/2} (1+T)  } e^{- \C (1+\nu(\alpha, \mu))^2 T} e^{\frac{\C}{T}} \geq\ C  e^{\frac{C}{ T}}  e^{- \C (1+\nu(\alpha, \mu))^2 T}.
\end{align*}
Since $2-\alpha >1$, we have $\nu(\alpha,\mu) \leq  2 \sqrt{\mu(\alpha)-\mu}.$ Hence
\begin{align*}
	\C_{ctr-bd} \geq \C  e^{\frac{\C}{ T}}  e^{- \C \left[ 1+\sqrt{\mu(\alpha)-\mu}\right]^2 T},
\end{align*}
which gives the result. 

\subsubsection{Lower bound in the case  $\nu(\alpha, \mu) \in \left[1/2,+\infty\right)$}
In this case, one still could apply  Theorem \ref{thm-guichal-gen}. Indeed, assumption \eqref{gap-max} is satisfied with 
\begin{align*}
	\gamma _{min} := \frac{\pi}{2} , \quad \quad \gamma _{max} := \frac{2-\alpha}{2} [ j_{\nu(\alpha,\mu),2} - j_{\nu(\alpha,\mu),1}].
\end{align*}

Nevertheless, since $\gamma_{max} \to +\infty$ as $\mu \to -\infty$ (as mentionned in \eqref{gammamax-explose}), this would not give the best possible result. On the other hand, from Lemma \ref{asymp_gap_lemma}, one has 
\begin{equation*}
	\forall k \geq N_*, \quad \sqrt{\lambda _{k+1}} - \sqrt{\lambda _{ k}}  \leq \gamma _{\text{max}}^* 
\end{equation*}
with 
\begin{equation*}
	N_* := [\nu (\alpha, \mu)] +1 \quad \text{ and } \quad \gamma _{max} ^* := 2\pi.
\end{equation*}

In that context, when there is a \textit{bad global} upper gap $\gamma _{max}$, and a \textit{good} (much smaller) {\it asymptotic} upper gap $\gamma _{max} ^*$, it is interesting to use the following extension of Theorem \ref{thm-guichal-gen}:

\begin{Theorem} (\cite[Theorem 2.2]{CMV-biortho-general})\label{thm-guichal-gen*}
Assume that $\lambda_k\geq 0$ for all $k\geq 1$ and that there are $0<\gamma _{min} \leq \gamma _{\text{max}}^* \leq \gamma _{\text{max}}$ such that
\begin{equation*}
	\forall k \geq 1, \quad \gamma _{\text{min}} \leq \sqrt{\lambda _{k+1}} - \sqrt{\lambda _{k}}  \leq \gamma _{\text{max}} ,
\end{equation*}
and
\begin{equation*}
	\forall k \geq N_*, \quad \sqrt{\lambda _{k+1}} - \sqrt{\lambda _{k}}  \leq \gamma _{\text{max}}^* .
\end{equation*}
Then any family $(\sigma _{m} )_{m\geq 1}$ which is biorthogonal to the family $(e^{\lambda _{k}t})_{k\geq 1}$ in $L^2(0,T)$ satisfies
\begin{equation*}
	\Vert \sigma _{m}  \Vert _{L^2(0,T)} ^2 \geq e^{-2\lambda _{m} T} \, e^{\frac{2}{T (\gamma _{\text{max}}^*) ^2}} \, b^* (T,\gamma_{max},\gamma_{max} ^*, N_*, \lambda _1,m)^2,
\end{equation*}
where $b^*$ is rational in $T$ (and explictly given in \cite[Lemma 4.4]{CMV-biortho-general}).
\end{Theorem}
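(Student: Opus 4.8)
The plan is to follow the Fourier--Laplace / entire-function approach that underlies Theorem \ref{thm-guichal-gen}, refining it so that the infinitely many \emph{late} eigenvalues --- those of index $k \geq N_*$, which enjoy the good asymptotic gap $\gamma_{max}^*$ --- dictate the dominant exponential factor, while the finitely many \emph{early} ones (index below $N_*$, where only the bad global gap $\gamma_{max}$ is available) are absorbed into a controllable prefactor.

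First I would associate to any biorthogonal family $(\sigma_m)_{m\geq 1}$ the entire function $\widehat\sigma_m(z):=\int_0^T \sigma_m(t)e^{zt}\,dt$. The biorthogonality relation forces the interpolation conditions $\widehat\sigma_m(\lambda_k)=\delta_{mk}$, so $\widehat\sigma_m$ vanishes at every $\lambda_k$ with $k\neq m$ and equals $1$ at $\lambda_m$. Since $\sigma_m$ is supported in $[0,T]$, Cauchy--Schwarz gives the pointwise majorization $|\widehat\sigma_m(z)|\leq \|\sigma_m\|_{L^2(0,T)}\big(\int_0^T e^{2\,\mathrm{Re}(z)\,t}\,dt\big)^{1/2}$, which transfers any lower bound for $|\widehat\sigma_m|$ at a well-chosen point into the desired lower bound for $\|\sigma_m\|_{L^2(0,T)}$. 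Because all the hypotheses bear on the square roots $\sqrt{\lambda_k}$, I would pass to the variable $\mu=\sqrt{z}$ and work with the even entire function $g(\mu):=\widehat\sigma_m(\mu^2)$, whose zeros are exactly the points $\pm\sqrt{\lambda_k}$, $k\neq m$.

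Next I would build the multiplier. The natural object is the canonical Weierstrass product $P(\mu):=\prod_{k\neq m}\big(1-\mu^2/\lambda_k\big)$ over the spectrum, which captures precisely the forced zeros of $g$; the lower gap $\gamma_{min}>0$ guarantees that the $\sqrt{\lambda_k}$ are uniformly separated, hence that $P$ converges and is of finite exponential type. The decisive step is to split this product at the index $N_*$: on the tail $k\geq N_*$ the inequality $\sqrt{\lambda_{k+1}}-\sqrt{\lambda_k}\leq\gamma_{max}^*$ lets me compare the factors with those of the reference equally spaced spectrum $\sqrt{\lambda_k}\approx k\,\gamma_{max}^*$ --- the spectrum of the pure $\nu=1/2$ case, whose canonical product is explicit (a cardinal sine, turning into a hyperbolic sine once evaluated off the real axis) --- and it is this comparison that produces the sharp exponent $\tfrac{2}{T(\gamma_{max}^*)^2}$. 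The finite head $k<N_*$, where only the crude bound $\sqrt{\lambda_{k+1}}-\sqrt{\lambda_k}\leq\gamma_{max}$ holds, contributes just finitely many factors and is estimated by a combinatorial/polynomial quantity depending on $N_*$, $\gamma_{max}$ and $\lambda_1$; this is exactly where the rational-in-$T$ prefactor $b^*$ and its dependence on those parameters originate.

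Finally I would divide: $g/P$ is entire (its zeros having been removed) and of controlled type, so evaluating the factorization at a judicious point $z_0$ --- taken off the real axis at a height tuned to $T$ and $\gamma_{max}^*$ so as to optimize the competition between the growth $e^{T\,\mathrm{Re}(z_0)}$ and the size of $P(z_0)$ --- and combining the resulting lower bound for $|\widehat\sigma_m(z_0)|$ with the Cauchy--Schwarz majorization above yields the stated estimate $\|\sigma_m\|_{L^2(0,T)}^2\geq e^{-2\lambda_m T}\,e^{\frac{2}{T(\gamma_{max}^*)^2}}\,(b^*)^2$. I expect the main obstacle to be this sharp two-regime product estimate: one must confine \emph{all} the non-sharp dependence (on $\gamma_{max}$, $N_*$ and $\lambda_1$) into the finite factor $b^*$, while ensuring that the infinite good-gap tail reproduces \emph{exactly} the exponent $\frac{2}{T(\gamma_{max}^*)^2}$. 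This requires a delicate comparison of the perturbed canonical product against the explicitly computable reference product, together with a careful optimization over the evaluation point $z_0$.
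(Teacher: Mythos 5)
This theorem is not proved in the paper at all: it is imported verbatim from \cite[Theorem 2.2]{CMV-biortho-general}, so there is no internal proof to compare your attempt against, and the honest benchmark is the proof in that cited reference. Your outline does reproduce the correct general strategy of that line of work (Fattorini--Russell, G\"uichal \cite{Guichal}, and the CMV papers): Laplace transform $\widehat\sigma_m$ with the interpolation conditions $\widehat\sigma_m(\lambda_k)=\delta_{mk}$, passage to the square-root variable where the gap hypotheses live, a canonical product over the forced zeros, and a splitting of that product at the index $N_*$ so that the good asymptotic gap $\gamma_{\text{max}}^*$ governs the exponential factor while the finitely many bad-gap zeros are relegated to the prefactor $b^*$.

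However, as a proof the proposal has a genuine gap: essentially everything that makes the theorem true sits in the two steps you explicitly defer. First, the sharp factor $e^{2/(T(\gamma_{\text{max}}^*)^2)}$ cannot come from evaluating the Cauchy--Schwarz bound at $\lambda_m$ (that only yields $\|\sigma_m\|^2\gtrsim \lambda_m e^{-2\lambda_m T}$, with no gain as $T\to 0$); it must be extracted from the vanishing of $\widehat\sigma_m$ at all the other eigenvalues, via lower bounds on the quotient $\widehat\sigma_m/P$. In the variable $\mu=\sqrt{z}$ this quotient is an entire function of order $2$ analyzed on quadrants of opening $\pi/2$ --- exactly the critical configuration where Phragm\'en--Lindel\"of arguments are delicate and where the choice of evaluation point (at height $\sim 1/(T\gamma_{\text{max}}^*)$ off the axis) has to be optimized; asserting that the comparison with the $\sin$-type reference product ``produces the sharp exponent'' is the theorem, not a step toward it. Second, the precise shape of $b^*$ --- the factorials and the exponents $K_*$, $K'_*$ of \cite[Lemma 4.4]{CMV-biortho-general}, which record how the $N_*$ bad-gap zeros interact with the location of $\lambda_m$ inside the spectrum --- is likewise announced (``a combinatorial/polynomial quantity'') rather than derived, and the statement of Theorem \ref{thm-guichal-gen*} is only as strong as the explicit control of this prefactor. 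So what you have is a correct road map, consistent with the method of the cited paper and with Theorem \ref{thm-guichal-gen}, but not a proof: the two quantitative estimates you flag as ``the main obstacle'' are precisely the content of the result.
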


We are here in position to apply Theorem \ref{thm-guichal-gen*}. So we obtain that any family $(\sigma _{m} )_{m\geq 1}$ which is biorthogonal to the family $(e^{\lambda _{ k}t})_{k\geq 1}$ in $L^2(0,T)$ satisfies
\begin{align*}
	\Vert \sigma _{m}  \Vert _{L^2(0,T)}^2 \geq e^{-2\lambda _{ m} T} \, e^{\frac{2}{4\pi ^2 T}} \, b^* (T,\gamma_{max},\gamma_{max} ^*, N_*, \lambda _{1},m)^2,
\end{align*}
where, when $m \leq N_*$, $b^*$ (see Lemma 4.4 in \cite{CMV-biortho-general}) has the following explicit value of 
\begin{equation*}
	b^*(T,\gamma_{max},\gamma_{max}^*,N_*, \lambda _1,m) = \C^* \frac{\sqrt{1+T\lambda _1}}{\sqrt{T}} \, \frac{(T \, (\gamma _{max} ^*) ^2)^{K_*+K' _*+2}}{(1+ (T \, (\gamma _{max} ^*) ^2))^{N_*+K_*+K' _*+3}},
\end{equation*}
with 
\begin{align*}
	&K_* = \left[\frac{2\sqrt{\lambda _1}+(N_*+m)\gamma _{max}}{\gamma_{max}^*}\right] - N_* + 2,
	\\
	&K' _* =  \left[\frac{\gamma_{max}}{\gamma_{max}^*}(N_*-m)\right] -N_*+2,
	\\
	&\C^* = \frac{1}{(N_*+K_*+K'_*+3)!} \frac{c_u (\gamma _{max} ^*)^{2(N_*-1)} }{\C^{(+)} \C^{(-)}},
\end{align*}
and  with
\begin{align*}
	\C^{(+)} = \left(\frac{\gamma _{max}}{\gamma _{max} ^*}\right)^{N_* -1} \frac{\left(N_*+m+ \left[\frac{2\sqrt{\lambda _1}}{\gamma_{max}}\right]+1\right)!}{\left(m+ \left[\frac{2\sqrt{\lambda _1}}{\gamma_{max}}\right]+1\right)! \, \left(\left[\frac{2\sqrt{\lambda _1}+(N_*+m)\gamma_{max}}{\gamma_{max}^*}\right]+1\right)! \, \left(2m+ \left[\frac{2\sqrt{\lambda _1}}{\gamma_{max}}\right]+1\right)} 
\end{align*}
and
\begin{align*}
	\C^{(-)} =  \left(\frac{\gamma _{max}}{\gamma _{max} ^*}\right) ^{N_*-1} \, \frac{(m-1)! \, (N_*-m)! }{\left(1+ \left[\frac{\gamma_{max}}{\gamma_{max}^*}(N_*-m)\right]\right)!}.
\end{align*}

In the above expressions, we take $m=1$ and we only need to look at the behavior as $\mu \to -\infty$ i.e. $\nu (\alpha,\mu) \to +\infty$. This is possible to study (see \cite{CMV-the-cost-strong}) and one obtains
\begin{align*}
	b^* (T,\gamma_{max},\gamma_{max} ^*, N_*, \lambda _{1},1) \geq e^{-\C \nu (\alpha, \mu) ^{4/3} (\ln \nu (\alpha, \mu) + \ln \frac{1}{T})} 
\frac{\sqrt{1+T}}{\sqrt{T}}.
\end{align*}
Consequently,
\begin{align*}
	\Vert \sigma _{1}  \Vert _{L^2(0,T)} ^2 \geq \underline{b} (T,\alpha, \mu,1) ^2,
\end{align*}
with
\begin{equation*}
	\underline{b} (T,\alpha, \mu,1) := e^{-\lambda _{1} T} e^{\frac{1}{4 \pi ^2 T}}\, \frac{\sqrt{1+T}}{\sqrt{T}}e^{-\C \nu (\alpha, \mu) ^{4/3} (\ln \nu (\alpha, \mu) + \ln \frac{1}{T})}.
\end{equation*}
Hence,
\begin{align*}
	\Vert \Phi_1'(1) H_1 \Vert_{L^2(0,T)}  \geq \underline{b} (T,\alpha, \mu,1).
\end{align*}
This gives the following lower bound of the cost:
\begin{align*}
	\C_{ctr-bd} \geq \frac{1}{\vert \Phi_1'(1) \vert} \underline{b} (T,\alpha, \mu,1).
\end{align*}

From \eqref{bessel_zero_bound}, we have $j_{\nu(\alpha,\mu),1} \leq \C ( 1+\nu(\alpha,\mu)).$ We deduce that $\lambda_1 \leq \C (  1+\nu(\alpha,\mu)  )^2$ and, using also \eqref{eigen_deriv}, $\vert \Phi_1'(1) \vert \leq \C  (1+\nu(\alpha,\mu) )$. So, we get
\begin{align*}
	\C_{ctr-bd} &\geq \frac{\C}{1 +  \nu (\alpha, \mu)} \, e^{-\lambda _{1} T} \, e^{\frac{1}{4 \pi ^2 T}}\, e^{-\C \nu (\alpha, \mu) ^{4/3} (\ln \nu (\alpha, \mu) + \ln \frac{1}{T})}\, \frac{\sqrt{1+T}}{\sqrt{T}}
	\\
	&\geq \C  e^{-\C (1+ \nu(\alpha, \mu))^2 T}\,  e^{\frac{1}{4 \pi ^2 T}}\, e^{-\C \nu (\alpha, \mu) ^{4/3} (\ln \nu (\alpha, \mu) + \ln \frac{1}{T})}\, \frac{\sqrt{1+T}}{\sqrt{T}} 
	\\
	&\geq \C \,  e^{\frac{\C}{T}}e^{-\C \left[1+ \sqrt{\mu(\alpha)-\mu} \right]^2 T}\, e^{-\C \left[\sqrt{\mu(\alpha)-\mu} \right]^{4/3} 
\left(\ln [\sqrt{\mu(\alpha)-\mu} ] + \ln \frac{1}{T} \right)} .
\end{align*}

\section{Proof of Theorem \ref{control_thm0}}\label{sec-pf-control_thm0}

This Section is devoted to the proof of Theorem \ref{control_thm0} on the boundary controllability for \eqref{pb-x-0}. As for the case of a control acting at $x=1$, the proof will be organized into the following steps: 
\begin{itemize}
	\item[•] \textbf{Step 1.} The reduction to a moment problem.
	\item[•] \textbf{Step 2.} Formal solution.
	\item[•] \textbf{Step 3.} Existence and regularity of the control and upper bound of the cost.
	\item[•] \textbf{Step 4.} Lower bound of the cost.
\end{itemize}

Moreover, in what follows we are not going to present all the details of our computations, since they are in many aspects similar to the ones in the previous sections.

\subsection{Reduction to a moment problem} 

Once again, we expand the initial condition $u_0\in L^2(0,1)$ and the solution to \eqref{pb-x-0}  with respect to the basis of the eigenfunctions $(\Phi_k)_{k\geq 1}$: 
\begin{align*}
	u_0(x)=\sum_{k\geq 1}\rho_k^0\Phi_k(x),\;\;\; u(x,t)=\sum_{k\geq 1}\beta_k(t)\Phi_k(x).
\end{align*}
Therefore, the controllability condition $u(x,T)=0$ reads again as in \eqref{controllability_moment}

Besides, we notice again that the function $v_k(x,t):=\Phi_k(x)e^{\lambda_k(t-T)}$ solves the adjoint problem \eqref{heat_hardy_adj}. Hence,  combining \eqref{pb-x-0} and \eqref{heat_hardy_adj} we obtain
\begin{align*}
	0=& \int_Q \bigg[v_k\left(u_t-(x^\alpha u_x)_x-\frac{\mu}{x^{2-\alpha}}u\right)+u\left(v_{k,t}+(x^\alpha v_{k,x})_x+\frac{\mu}{x^{2-\alpha}}v_k\right)\bigg]\,dxdt
	\\
	=& \int_0^1 v_ku\,\Big|_0^T\,dx - \int_0^T x^\alpha u_xv_k\,\Big|_0^1\,dt + \int_0^T x^\alpha v_{k,x}u\,\Big|_0^1\,dt
	\\
	=& \int_0^1 v_k(x,T)u(x,T)\,dx - \int_0^1 v_k(x,0)u_0(x)\,dx - \int_0^T H(t)(x^{\alpha+\gamma}v_{k,x})(0,t)\,dt 
	\\
	=& \int_0^1 u(x,T)\Phi_k(x)\,dx - e^{-\lambda_k T}\int_0^1 u_0(x)\Phi_k(x)\,dx - e^{-\lambda_kT}r_k\int_0^T H(t)e^{\lambda_k t}\,dt
	\\
	=& \,\beta_k(T) - \rho_k^0e^{-\lambda_k T} - e^{-\lambda_k T}r_k\int_0^T H(t)e^{\lambda_kt}\,dt,
\end{align*}
where we have indicated
\begin{align}\label{rk}
	r_k:=\lim_{x\to 0^+} x^{\alpha+\gamma}\Phi'_k(x).
\end{align}
Then, from the controllability condition \eqref{controllability_moment} it follows that
\begin{align}\label{moment_cond0}
	\forall k\geq 1,\;\;\;-r_k\int_0^T H(t)e^{\lambda_kt}\,dt = \rho_k^0. 
\end{align}

On the other hand, since we are looking for a solution of the moment problem belonging to $H^1(0,T)$, instead of \eqref{moment_cond0} we would rather be interested in a condition involving the derivative of the function $H$. This condition can be obtained once again integrating by parts as follows
\begin{align*}
	\int_0^T H(t)e^{\lambda_kt}\,dt = \frac{1}{\lambda_k}H(t)e^{\lambda_k t}\,\bigg|_0^T-\frac{1}{\lambda_k}\int_0^T H'(t)e^{\lambda_k t}\,dt.
\end{align*}
Therefore, the derivative $H'(t)$ has to satisfy
\begin{align}\label{moment_cond_H1_0}
	\forall k\geq 1,\;\;\;\frac{r_k}{\lambda_k}\int_0^T H'(t)e^{\lambda_kt}\,dt = \rho_k^0 +\frac{r_k}{\lambda_k}\left(H(T)e^{\lambda_k T}-H(0)\right).
\end{align}

Also in this case, we will provide a solution to the above problem which satisfies $H(0)=H(T)=0$. To this end, we remark that the value of $r_k$ can be computed explicitly, starting from \eqref{rk} and using the definition of the Bessel function $J_{\nu(\alpha,\mu)}$. In particular, one can readily verify that

\begin{equation}\label{rk-expl}
	r_k= \frac{\mathcal B_1(\alpha,\mu)}{\Gamma(1+\nu(\alpha,\mu))}\jnk,
\end{equation}
with 
\begin{align*}
	\mathcal{B}_1(\alpha,\mu):=\frac{\sqrt{2-\alpha}}{|J_{\nu(\alpha,\mu)}'(\jnk)|}\left(\sqrt{\mu(\alpha)}+\sqrt{\mu(\alpha)-\mu}\right). 
\end{align*}

\subsection{Formal solution of the moment problem}

\subsubsection{Formal definition of the control $H$}
We present here the formal computations showing that the moment problem \eqref{moment_cond_H1_0} has a solution $H$. To define this function $H$, we will employ again the biorthogonal a sequence $(\sigma_k)_{k\geq 1}$ that we introduced before, and whose existence is guaranteed by the gap conditions \eqref{eigen-gap1} and \eqref{eigen-gap2} and by Theorem \ref{thm-biortho1-gen}

Now, let us define the function $H$ as follows:
\begin{align}\label{H_def-0}
	H(t):=\int_0^t K(s)\,ds, \quad \text{ with }\quad K(t):=\sum_{k\geq 1} \frac{\lambda_k}{r_k}\rho_k^0\sigma_k(t).
\end{align}

It is straightforward that, if $K\in L^2(0,T)$, then we have $H\in H^1(0,T)$ with $H'(t)=K(t)$ and $H(0)=0=H(T)$. Moreover,
\begin{align*}
	\frac{r_k}{\lambda_k}\int_0^T H'(t)e^{\lambda_k t}\,dt &= \frac{r_k}{\lambda_k}\int_0^T K(t)e^{\lambda_k t}\,dt = \frac{r_k}{\lambda_k}\int_0^T \left(\sum_{\ell\geq 1} \frac{\lambda_{\ell}}{r_{\ell}}\rho_{\ell}^0\sigma_{\ell}(t)\right)e^{\lambda_k t}\,dt
	\\
	&= \frac{r_k}{\lambda_k}\sum_{\ell\geq 1}\frac{\lambda_{\ell}}{r_{\ell}}\rho_{\ell}^0\int_0^T \sigma_{\ell}(t)e^{\lambda_k t}\,dt = \frac{r_k}{\lambda_k}\sum_{\ell\geq 1}\frac{\lambda_{\ell}}{r_{\ell}}\rho_{\ell}^0\delta_{k,\ell} = \rho_k^0,
\end{align*}
and the moment problem \eqref{moment_cond_H1_0} is formally satisfied.

\subsubsection{If regular, the control $H$ drives the solution from $u_0$ to zero}

We show here that the control $H$ that we introduced in \eqref{H_def-0} is able to drive the solution to \eqref{pb-x-0} from the initial state $u_0$ to zero in time $T$. To this end, let us remind the change of variables 
\begin{align*}
	v(x,t):=u(x,t)-x^\gamma\frac{p(x)}{p(0)}H(t),\;\;\; p(x):=1-x^q, \;\;\; q=2\sqrt{\mu(\alpha)-\mu},
\end{align*}
that transforms our original equation \eqref{pb-x-0} into
\begin{align*}
\renewcommand*{\arraystretch}{1.3}
	\left\{\begin{array}{ll}
		\displaystyle v_t-(x^\alpha v_x)_x-\frac{\mu}{x^{2-\alpha}}v = -x^\gamma\frac{p(x)}{p(0)}K(t), & (x,t)\in Q
		\\ 
		v(0,t)=v(1,t)=0, & t\in (0,T)
		\\ 
		v(x,0)=u_0(x), & x\in (0,1).
	\end{array}\right.
\end{align*} 
Now, for a fixed $\varepsilon>0$ we have
\begin{align*}
	\int_{\varepsilon}^T&\int_0^1 -x^\gamma\frac{p(x)}{p(0)}K(t)\Phi_k(x)e^{\lambda_k t}\,dxdt 
	\\
	&= \int_{\varepsilon}^T\int_0^1 \left(v_t-(x^\alpha v_x)_x-\frac{\mu}{x^{2-\alpha}}v\right)\Phi_k(x)e^{\lambda_k t}\,dxdt
	\\
	&= \int_0^1 v\Phi_ke^{\lambda_k t}\,\Big|_{\varepsilon}^T\,dx + \int_{\varepsilon}^T\int_0^1 v\left(-(x^\alpha\Phi_k')'-\frac{\mu}{x^{2-\alpha}}		\Phi_k-\lambda_k\Phi_k\right)e^{\lambda_k t}\,dxdt
	\\
	&=e^{\lambda_k T}\int_0^1 v(x,T)\Phi_k(x)\,dx - e^{\lambda_k \varepsilon}\int_0^1 v(x,\varepsilon)\Phi_k(x)\,dx.
\end{align*}
Hence, taking the limit for $\varepsilon\to 0^+$ we find
\begin{align*}
	\int_Q -x^\gamma\frac{p(x)}{p(0)}K(t)\Phi_k(x)e^{\lambda_k t}\,dxdt = e^{\lambda_k T}\int_0^1 v(x,T)\Phi_k(x)\,dx - \rho_k^0.
\end{align*}
From this last identity and \eqref{moment_cond_H1_0}, it immediately follows
\begin{align*}
	e^{\lambda_k T}\int_0^1 v(x,T)\Phi_k(x)\,dx &= \rho_k^0 + \left(\int_0^T K(t)e^{\lambda_k t}\,dt\right)\left(\int_0^1 -x^\gamma\frac{p(x)}{p(0)}\Phi_k(x)\,dx\right)
	\\
	&= \rho_k^0 +\frac{\lambda_k}{r_k}\rho_k^0\int_0^1 -x^\gamma\frac{p(x)}{p(0)}\Phi_k(x)\,dx.
\end{align*}
Moreover, 
\begin{align*}
	\int_0^1 & -x^\gamma\frac{p(x)}{p(0)}\Phi_k(x)\,dx 
	\\
	&= \frac{1}{\lambda_k}\int_0^1 -x^\gamma\frac{p(x)}{p(0)}\lambda_k\Phi_k(x)\,dx = \frac{1}{\lambda_k}\int_0^1x^\gamma\frac{p(x)}{p(0)}\left((x^\alpha\Phi_k'(x))'+\frac{\mu}{x^{2-\alpha}}\Phi_k(x)\right)\,dx
	\\
	&= \frac{1}{\lambda_k}\frac{p(x)}{p(0)}x^{\alpha+\gamma}\Phi_k'(x)\,\bigg|_0^1 - \frac{1}{\lambda_k}\int_0^1 \left(x^\gamma\frac{p(x)}{p(0)}\right)'x^{\alpha}\Phi_k'(x)\,dx + \frac{1}{\lambda_k}\int_0^1 x^\gamma\frac{p(x)}{p(0)}\frac{\mu}{x^{2-\alpha}}\Phi_k(x)\,dx 
	\\
	&=-\frac{r_k}{\lambda_k} - \frac{1}{\lambda_k}\left(x^\gamma\frac{p(x)}{p(0)}\right)'x^{\alpha}\Phi_k(x)\,\bigg|_0^1 + \frac{1}{\lambda_k}\int_0^1 \left[\left(x^{\alpha}\left(x^\gamma\frac{p(x)}{p(0)}\right)'\right)'+ \mu x^{\alpha+\gamma -2}\frac{p(x)}{p(0)}\right]\Phi_k(x)\,dx 
	\\
	&=-\frac{r_k}{\lambda_k} + \frac{1}{\lambda_k p(0)}\int_0^1 \Big[\left(x^{\alpha}\left(x^\gamma p(x)\right)'\right)'+ \mu x^{\alpha+\gamma-2}p(x)\Big]\Phi_k(x)\,dx = -\frac{r_k}{\lambda_k},
\end{align*}
since we already noticed that (see \eqref{Label})
\begin{align*}
	\big[x^{\alpha}\left(x^\gamma p\right)'\big]'(x)+ \frac{\mu}{x^{2-\alpha-\gamma}}p(x)=0. 
\end{align*} 
Hence, we get
\begin{align*}
	e^{\lambda_k T}\int_0^1 v(x,T)\Phi_k(x)\,dx = 0,
\end{align*}
which of course implies $v(x,T) = 0$ and, since $H(T)=0$, we can finally conclude that
\begin{align*}
	u(x,T)=v(x,T)+x^\gamma\frac{p(x)}{p(0)}H(T)=0.
\end{align*}

\subsection{Existence of the control, $H^1$ regularity and upper bound of the cost of controllability.}

We have to check that the control $H$ defined as in \eqref{H_def} belongs to $H^1(0,T)$ and to obtain the upper bound for the controllability cost. To this end, as we did before, we are going to prove instead that the function $K$ belongs to $L^2(0,T)$. 

In what follows, $\C_u$ denotes again a universal constant, independent of $T$, $\gamma_{max}$, $\gamma_{min}$ and $k$, which may change value even from line to line. From \eqref{H_def-0} we have 
\begin{align*}
	\norm{K}{L^2(0,T)} = \norm{\sum_{k\geq 1}\frac{\lambda_k}{r_k}\rho_k^0\sigma_k(t)}{L^2(0,T)} \leq \sum_{k\geq 1} |\,\rho_k^0|\left|\frac{\lambda_k}{r_k}\right|\norm{\sigma_k(t)}{L^2(0,T)}.
\end{align*}

Moreover, employing the expression \eqref{eigenv} of $\lambda_k$ and the explicit expression of $r_k$ given in \eqref{rk-expl}  we obtain
\begin{align*}
	\left|\frac{\lambda_k}{r_k}\right| &= \frac{(2-\alpha)^{\frac 32}\Gamma(1+\nu(\alpha,\mu))|J_{\nu(\alpha,\mu)}'(\jnk)|}{4\left(\sqrt{\mu(\alpha)}+\sqrt{\mu(\alpha)-\mu}\right)}\jnk \leq \C_u\frac{\Gamma(1+\nu(\alpha,\mu))}{\sqrt{\mu(\alpha)}+\sqrt{\mu(\alpha)-\mu}}\jnk, 
\end{align*}
since $0\leq\alpha<1$ and $|J_{\nu(\alpha,\mu)}'(\jnk)|\leq 1$ (see \cite[Formula 79]{cost-weak}). Therefore, we get
\begin{align*}
	& \norm{K}{L^2(0,T)} \leq \C_u\frac{\Gamma(1+\nu(\alpha,\mu))}{\sqrt{\mu(\alpha)}+\sqrt{\mu(\alpha)-\mu}} \norm{u_0}{L^2(0,1)}\left(\sum_{k\geq 1} \jnk^2\norm{\sigma_k(t)}{L^2(0,T)}^2\right)^{\frac 12}.
\end{align*}

From here, proceeding as in Section \ref{upp_bound_one_sec}, we can immediately conclude that $K\in L^2(0,T)$ and we have the following estimate
\begin{align*}
	\C _{ctr-bd} &\leq \C_u\frac{\Gamma(1+\nu(\alpha,\mu))}{\sqrt{\mu(\alpha)}+\sqrt{\mu(\alpha)-\mu}} e^{\frac{\C_u}{T}} \left[1+  \sqrt{\mu(\alpha)-\mu}\right] e^{- \C_u \left( 1+\sqrt{\mu(\alpha)-\mu} \right)^2 T}  .
\end{align*}

\subsection{Lower bound of the cost of controllability}\label{sec-pf-control_cost_thm0}
Fix $m\geq 1$ and consider the initial condition $u_0=\Phi_m$ in \eqref{pb-x-1}. Let $H_m$ be any control that drives the solution of the equation to zero in time $T$. Then, the moment condition \eqref{moment_cond0} yields
\begin{align*}
	-r_k\int_0^TH_m(t)e^{\lambda_kt}\,dt = \rho_k^0 = \int_0^1 u_0(x)\Phi_k(x)\,dx = \delta_{k,m}.
\end{align*}
Hence, 
\begin{align*}
	\forall\,k\geq 1, \;\;\;\int_0^T\Big(r_m H_m(t)\Big)e^{\lambda_kt}\,dt = r_m\frac{\delta_{k,m}}{r_k} = \begin{cases}
1, &\textrm{ if }k=m
	\\
	0, &\textrm{ if }k\neq m.
\end{cases}
\end{align*}

This means that the sequence $\big(r_m H_m(t)\big)_{\ell\geq 1}$ is biorthogonal to $(e^{\lambda_kt})_{k\geq 1}$ in $L^2(0,T)$. Now, as we did before, we choose $m=1$ and we distinguish between the two cases 
\begin{align*}
	\nu(\alpha,\mu)\in\left[0,\frac 12\right] \quad \text{ and } \quad \nu(\alpha,\mu)\in\left[\frac 12,+\infty\right).
\end{align*}
In the former one, employing \eqref{mino-sigma1} we have 
\begin{align*}
	\norm{r_1 H_1(t)}{L^2(0,T)}^2\geq e^{-2\lambda_1 T}e^{\frac{1}{2\gamma_{max}^2T}}b(T,\gamma_{\max},1),
\end{align*}
which yields 
\begin{align*}
	\norm{H_1(t)}{L^2(0,T)}^2& \geq \frac{1}{|r_1|}e^{-2\lambda_1 T}e^{\frac{1}{2\gamma_{max}^2T}}b(T,\gamma_{\max},1).
\end{align*}
Now, thanks to \eqref{rk-expl} we obtain
\begin{align*}
	\frac{1}{|r_1|} =  \frac{\Gamma(1+\nu(\alpha,\mu))}{\sqrt{\mu(\alpha)}+\sqrt{\mu(\alpha)-\mu}}\frac{|J_{\nu(\alpha,\mu)}'(j_{\nu(\alpha,\mu),1})|}{\sqrt{2-\alpha}}j_{\nu(\alpha,\mu),1}^{-1}.
\end{align*}

Moreover, since $0\leq\alpha<1$, employing \eqref{bessel_zero_bound} and the fact that $|J_\nu'(j_{\nu(\alpha,\mu),1})|\geq \C$ with $\C$ independent of $\mu$ (see \cite[Corollary 2]{cost-weak}), we also have
\begin{align*}
	\frac{1}{|r_1|} \geq  \frac{\C_u}{\sqrt{\mu(\alpha)}+\sqrt{\mu(\alpha)-\mu}}
\end{align*}
which yields
\begin{align*}
	\norm{H_1(t)}{L^2(0,T)}& \geq \frac{\C_u}{\sqrt{\mu(\alpha)}+\sqrt{\mu(\alpha)-\mu}}e^{-2\lambda_1 T}e^{\frac{1}{2\gamma_{max}^2T}}b(T,\gamma_{\max},1).
\end{align*}
Proceeding now as in the proof of Theorem \ref{control_thm1} it is easy to obtain our final estimate
\begin{align*}
	\C_{ctr-bd} \geq \frac{\C_u}{\sqrt{\mu(\alpha)}+\sqrt{\mu(\alpha)-\mu}}\frac{1}{T^4}e^{-\C_u(1-\alpha)^2T}e^{\frac{\C_u}{T}}.
\end{align*}
When $\nu(\alpha,\mu)\geq \frac 12$, instead, the lower bound reads as follows:
\begin{align*}
	\C_{ctr-bd} &\geq \frac{\C_u}{\sqrt{\mu(\alpha)}+\sqrt{\mu(\alpha)-\mu}}\,e^{\frac{\C}{T}}e^{-\C \left[1+ \sqrt{\mu(\alpha)-\mu} \right]^2 T}\, e^{-\C \left[\sqrt{\mu(\alpha)-\mu} \right]^{4/3} \left(\ln [\sqrt{\mu(\alpha)-\mu} ] + \ln \frac{1}{T} \right)}.
\end{align*}

The proof of this fact is totally analogous to what we already did in the proof of Theorem \ref{control_thm1} and we leave it to the reader.

\section{Final comments and open questions}\label{open_pb_sec}

In this paper, we have analyzed the controllability properties of a degenerate/singular parabolic equation on the space interval $(0,1)$. We have considered the two different situations of a boundary control acting at $x=1$ or $x=0$ (where the degeneracy/singularity occurs). In both cases, by means of the classical moment method, we have shown that the equation is null-controllable and we provided suitable estimates for the controllability cost. 

We present hereafter a non-exhaustive list of comments and open problems related to our work.

\begin{enumerate} 
	
	\item As a first thing, we recall that in the present work we are not treating the strongly degenerate case $1 \leq \alpha <2$. 
	\begin{itemize}
		\item When the control acts at $x=1$, we expect this case could be treated with a similar methodology (also using the ideas of \cite{CMV-the-cost-strong}). However, in order to keep the paper of a reasonable length, this case is not covered here.
		
		\item When the control acts at $x=0$, instead, this is an open question even in the purely degenerate case $\mu=0$ (\cite{CMV-the-cost-strong} deals only with controls in $x=1$). Indeed, in this case one encounters difficulty already at the level of the well-posedness of the equation, due to the need to find a suitable boundary condition.
	\end{itemize} 
	
	\item A second open problem is related to the obtaining of suitable Carleman estimates for boundary controllability.  This is not an easy task. Indeed, the usual weights introduced in previous works (\cite{biccari,sicon2008,memoire,cazacu,Ervedoza,Martinez,vanco1,Va-Zu-JFA}) for proving interior controllability are designed in such a way that all the boundary terms are greater or equal to zero, and can therefore be ignored. On the other hand, adapting these weights in order to keep the boundary terms and still be able to prove the Carleman is a quite cumbersome issue. Nevertheless, the interest in obtaining, if possible, a Carleman estimate for boundary controllability remains, and it is related to various further applications: 
	\begin{itemize}
		\item the treatment of equations with a nonlinear term;
		\item the possibility of considering general function $a(x)$ (such as in \cite{Martinez}) instead of $x^\alpha$ in the purely degenerate case (and even with a double degeneracy both at $x=0$ and $x=1$); 
		\item the possibility of studying problems for a purely singular operator with two singular points at $x=0$ and $x=1$;
		\item the case of a degenerate/singular operator with $\mu/x^\beta$ with $\beta \leq 2-\alpha$ (instead of $\mu/x^{2-\alpha}$). In this case (analyzed in \cite{vanco1} only limited to a locally distributed control), null controllability should be true for any $\mu$ but it cannot be studied with the present method.
	\end{itemize}
\end{enumerate}

\end{document}